\documentclass[12pt,a4paper]{article}
\usepackage{bbm}
\usepackage{graphicx, color, tikz}
\graphicspath{ {./images/} }
\usepackage{wrapfig}
\usepackage{multicol}
\usepackage[mathscr]{euscript}
\usepackage{subcaption}
\usepackage{hyperref}
\usepackage{epstopdf}
\usepackage{enumerate}
\usepackage{amsmath,amsfonts,amssymb,amsthm,epsfig,epstopdf,titling,url,array}
\usepackage{color}
\usepackage{framed}
\usepackage[utf8]{inputenc}
\usepackage[english]{babel}

\usepackage{xparse}
\usepackage[margin=2.5cm]{geometry}
\usepackage[most]{tcolorbox}
\usepackage[export]{adjustbox}
\usetikzlibrary{decorations.markings}
\NewDocumentCommand{\INTERVALINNARDS}{ m m }{
	#1 {,} #2
}
\NewDocumentCommand{\interval}{ s m >{\SplitArgument{1}{,}}m m o }
{
	\IfBooleanTF{#1}{
		\left#2 \INTERVALINNARDS #3 \right#4
	}{
		\IfValueTF{#5}{
			#5{#2} \INTERVALINNARDS #3 #5{#4}
		}{
			#2 \INTERVALINNARDS #3 #4
		}
	}
}
\newtheorem{theorem}{Theorem}[section]
\newtheorem{corollary}{Corollary}[theorem]
\newtheorem{definition}{Definition}[section]
\newtheorem{lemma}[theorem]{Lemma}
\newtheorem{example}{Example}[section]

\newtheorem{Remark}{Remark}[section]

\newtheorem{question}[theorem]{Question}

\usepackage{authblk}

\begin{document} 
	\title{\textbf{Exponential polynomials with Baker omitted value}}
	\author{Sukanta Das\footnote{Corresponding author, a21ma09005@iitbbs.ac.in}}
	\author[2]{Subhasis Ghora\footnote{subhasisghora@iiitdm.ac.in}}
	\author[3]{Tarakanta Nayak\footnote{ tnayak@iitbbs.ac.in}}
	
	%	\footnote{
		%			Supported by University Grants Commission, Govt. of India.}
	\affil[1,3]{Department of Mathematics, School of Basic Sciences, 
		Indian Institute of Technology Bhubaneswar, India}
	\affil[2]{Department of Science and Humanities, Indian Institute of Information Technology, Design and Manufacturing, Kancheepuram, India}
	\date{}
	\maketitle
	\begin{abstract}  We consider exponential polynomials of the form
		$F_l(z)=P_1(z)\exp(Q_1(z))+P_2(z)\exp(Q_2(z))+\cdots+P_{l-1}(z)\exp(Q_{l-1}(z))+P_l(z)$,
		where $l\geq2$, each $Q_i$ is a non-constant polynomial and each $P_i$, $i=1,2,3\ldots,l-1$, is a polynomial (possibly constant), while $P_l$ is a non-constant polynomial. This article studies the topology of the preimages under $F_l$ of the neighborhoods of the essential singularity at infinity. We prove that for each disk $D$ (with respect to the spherical metric) centered at infinity, the set $F_l ^{-1} (D) $ is connected and, in fact, is an infinitely connected domain with all its boundary components bounded. In such a situation, the point at $\infty$ is called the Baker omitted value of the function.  Our methods extend those developed by Das and Nayak for the function $\exp(z)+P(z)$ for any non-constant polynomial $P$ (Complex Var. Elliptic Equ. 70 (2025), 1831–1847), providing a broader framework for studying this phenomenon. We further show that these functions do not admit any Baker wandering domain. Finally, we conclude by posing some problems arising out of this work.
		
	\end{abstract}
	\textit{Keywords:} Exponential polynomial; Baker omitted value; Baker wandering domain.\\
	
	AMS Subject Classification: 37F10, 30D05
	%====================================================================	
	\section{Introduction}
	Let $f:\mathbb{C}\to\mathbb{C}$ be a transcendental entire function. A point $a\in\widehat{\mathbb C}$ is called a singular value of  $f$  if at least one branch of its inverse $f^{-1}$ fails to be defined at $a$. A complex number $z_0$ is called a critical point of $f$ if $f'(z_0)=0$, and $f(z_0)$ is called a critical value of $f$. Clearly, a critical value is a singular value. To describe other types of singular values, let $D_r(a)$ be a disk (in the spherical metric) for some $r>0$ and choose a component $U_r$ of $f^{-1}(D_{r}(a))$ such that $U_{r_{1}}\subset U_{r_{2}}$ for $0 < r_1 < r_2$. If $\bigcap_{r>0}U_{r}=\emptyset$, then the choice $r\mapsto U_r$ is said to define a transcendental singularity of $f^{-1}$, and we say that a transcendental singularity lies over $a$. In this situation, there exists a curve $\Gamma:[0,\infty)\to \mathbb{C}$ such that $\lim_{ t\to \infty}\Gamma(t)=\infty$ and $\lim_{ t\to \infty}f(\Gamma(t))=a$, and $a$ is called an asymptotic value of $f$.  
	A singularity $U_r$ lying over $a$ is called {\it{logarithmic}} if $f:U_r\to {D_r(a)\setminus\{a\}}$ is a universal covering for some $r>0$. In particular, $U_r$ is simply connected. Otherwise, the singularity is called non-logarithmic.  For instance, the function $\exp(z)$ has a logarithmic singularity lying over $0$, which is a primary reason for the extensive study of its dynamics - its Fatou and the Julia sets (for details, one can see \cite{Berg 1993}).  However, functions with non-logarithmic singularities can be very wild as long as their dynamics are concerned. This article deals with functions having non-logarithmic singularities of a special type. Some definitions are needed to make this more precise.
	\par A point $a\in\mathbb{\widehat{C}}$ is said to be an omitted value of $f$ if $f(z)\neq a$ for any $z\in\mathbb{C}$. In particular, the point at infinity is always an omitted value for every transcendental entire function. Moreover, an omitted value is always an asymptotic value of the function \cite{Iversen 1914}. In 2016, Chakra et al. introduced {\it{Baker omitted value}} (in short, {\it{bov}}), over which there is a single singularity, and that is not logarithmic  \cite{ChakraChakrabortyNayak 2016}. 
	\begin{definition}
		An omitted value $a\in\widehat{\mathbb{C}}$ of an entire function $f$ is said to be bov if there is a disk $D$ with center at $a$ such that $f^{-1}(D)$ is infinitely connected and each component of the boundary of $f^{-1}(D)$ is bounded.
	\end{definition} 
	In fact, if $a$ is a bov of $f$ then for any $r>0$, $f^{-1}(D_r)$ is infinitely connected, and each of its boundary components is bounded where $D_r$ denotes a disk with center at $a$ and radius $r$ (Lemma 2.3, \cite{ChakraChakrabortyNayak 2016}). Moreover, functions with bov have an unbounded set of singular values (Lemma 2.5, \cite{GhoraNayakSahoo 2021}) and therefore do not belong to the well-studied Eremenko-Lyubich class  \cite{GhoraNayakSahoo 2021}. This is a motivation to take up the investigation of these functions from a dynamical perspective. 
	\par 
	Now, we turn to the dynamical relevance of bov.	The first example of an entire function having a Baker wandering domain - a multiply connected bounded domain that wanders and at the same time expands in all directions towards infinity under the iteration of the function (see Section \ref{Sec2} for a precise definition) is given by Baker in 1976 \cite{Baker 1976}. Interestingly, Baker had already introduced and studied this function in 1963, but at that time he was unable to establish that it has wandering domains \cite{Baker 1963}. The function was 	\begin{equation}\label{Exm1}
		f(z)=\frac{z^2}{4e}\prod_{n=1}^{\infty}\left(1+\frac{z}{r_n}\right),
	\end{equation}
	where  $r_{n+1}=\frac{r_n^2}{4e}\prod_{i=1}^{n}\left(1+\frac{r_n}{r_i}\right)$, $n\in\mathbb{N}$ and $r_1>4e$. It is observed (in Theorem 2.3, \cite{ChakraChakrabortyNayak 2016}) that this function has bov and that this is a consequence of Baker wandering domain. The notion of Baker omitted value, in fact, traces its origin to this important fact  \cite{ChakraChakrabortyNayak 2016}. Some other examples of functions with Baker wandering domains are inspired by (\ref{Exm1}) and can be found in \cite{Baker 1985, Baker Dominguez 2000, BergZheng 2011, BergRippStall 2013}. All the entire functions studied in these articles are infinite products of polynomials. On the other hand, entire functions in \textit{closed form} admitting a bov have also been found. For example, it is proved that  $\exp(z) +cz^d$ has bov for every non-zero complex number $c$ and every natural number $d$ (see  Remark 2.2,   \cite{GhoraNayakSahoo 2021}). In 2025, Das and Nayak \cite{DasNayak 2024} proved that for every non-constant polynomial $P$ and natural number $k$, the function $E^k(z)+P(z)$ has bov where $E^k(z)$ is the $k$-times composition of $E(z)=\exp(z)$ with itself. The existence of bov for these closed form functions is proved not as a consequence of Baker wandering domains (which is usually very much involved itself) but by a completely function-theoretic argument. This is a departure from Baker's example.
	
	\par Note that for $k=1$ the function  $E^k(z)+P(z)$ is of order $1$ and  is of infinite order when $k>1$. Here, the order of an entire function $f$ is given by $\rho(f):=\limsup_{r\to\infty}\frac{\log\log M(r,f)}{\log r}$, where $M(r,f)=\max_{|z|=r}|f(z)|$ (see \cite{asymptoticvalue-book} for more details).   This article identifies entire functions of finite order, especially those of integral order greater than one that admit bov. We consider exponential polynomials, i.e., functions of the form \begin{equation}\label{EXP-poly}
		F_l (z):=	P_1(z)\exp(Q_1(z))+P_2(z)\exp(Q_2(z))+\cdots+P_{l-1}(z)\exp(Q_{l-1}(z))+P_l(z), 
	\end{equation}
	where $l \geq 2$,  $Q_i$ is a non-constant polynomial and $P_i$ is a non-zero polynomial (possibly constant, i.e., $P_i \equiv c, c\neq 0 $) for each $i=1,2,\ldots, l-1$ and $P_l$ is a non-constant polynomial. From now on, a constant polynomial $P$ taking the value $c$ is denoted by $P \equiv c$. If a $Q_i$ is constant, then the corresponding term $P_i \exp(Q_i)$ becomes a polynomial and gets absorbed in $P_l$, leading to a function of the same form as (\ref{EXP-poly}). Similarly, unless $P_l$ is non-constant,  the function $F_l$ might have a finite asymptotic value (e.g., $e^z+1$) ruling out the existence of bov  (see Theorem 2.1,~\cite{ChakraChakrabortyNayak 2016}) - a situation we do not consider. The order of $F_l$ (with all the aforesaid restrictions on $P_i$ and $Q_i$) is  $\max \{\deg(Q_i): 1 \leq i \leq l-1\}$. We prove that for each natural number $d$, there is an entire function of order $d$ having bov, namely $P_1 \exp(Q_1)+P_2$, where the degree $\deg(Q_1)=d$. Thus, our first result concerns $F_2=P_1 \exp(Q_1)+P_2$ which generalizes Theorem 1.1, \cite{DasNayak 2024} (where $P_1(z)=1$ and $Q_1(z)=z$). Moreover, we show that $F_l$ has bov for $l\geq3$ under certain additional hypotheses, which will be stated just after the following theorem.

	\begin{theorem}\label{P_1e^{Q_1}+P_2}
		If  $Q_1$  and $P_2$ are non-constant polynomials, and $P_1$ is a non-zero polynomial, which is possibly constant, then the exponential polynomial $F_2= P_1\exp(Q_1)+P_2$ has bov.
	\end{theorem}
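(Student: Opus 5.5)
We have to show that for every large $R$ the set $A_R=F_2^{-1}(\{|w|>R\})$ is connected and infinitely connected, and that every boundary component of $A_R$ is bounded. By Lemma 2.3 of \cite{ChakraChakrabortyNayak 2016} it suffices to do this for one disk centered at $\infty$, so we may take $R$ as large as we like. The plan follows the strategy of Das and Nayak for $e^z+P(z)$. Write $d=\deg Q_1$ and $Q_1(z)=a z^d+\cdots$. The plane splits asymptotically into $2d$ sectors
\[
S_j=\{z:\ |\arg(az^d)-j\pi|<\pi/2\}.
\]
On these sectors $\operatorname{Re}Q_1$ is alternately large positive (even $j$) and large negative (odd $j$). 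They are bounded by $2d$ curves $\gamma_k$, which are asymptotic to the rays where $\operatorname{Re}(az^d)=0$. More precisely, take $\gamma_k$ to be the unbounded components of $\{\operatorname{Re}Q_1(z)=c\log|z|\}$ for a suitable constant $c$, which are close to those rays.

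\textbf{Step 1 (outside thin channels, $|F_2|$ is large).} In the region where $\operatorname{Re}Q_1(z)\ge N\log|z|$, with $N$ larger than $\deg P_2$ plus a margin, the exponential term dominates and $|F_2(z)|\ge \tfrac12|P_1(z)|e^{\operatorname{Re}Q_1(z)}$. In the region where $\operatorname{Re}Q_1(z)\le -N\log|z|$, the term $|P_1e^{Q_1}|$ tends to $0$ and $|F_2|\approx|P_2|\to\infty$. Both estimates hold for $|z|\ge r_0$. Consequently, outside a large disk $\{|z|\le r_0\}$ and outside thin curvilinear channels $T_k=\{|\operatorname{Re}Q_1|<N\log|z|\}$ around each $\gamma_k$, we have $|F_2|>R$. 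The whole set $\{|z|\ge r_0\}\setminus\bigcup_k T_k$ is therefore contained in $A_R$.

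\textbf{Step 2 (the channels are crossed by large values).} Inside each channel $T_k$ we parametrize by $\operatorname{Im}Q_1$. Crossing $T_k$ transversally, $\operatorname{Re}Q_1$ goes from $-N\log|z|$ to $N\log|z|$, while along the channel $\operatorname{Im}Q_1$ increases monotonically, roughly like $|z|^d$. The main obstacle is to show two things. First, every component of $\{|F_2|\le R\}$ lying in $T_k$ is bounded, since zeros of $F_2$ accumulate along the channels and these components must be separated. Second, they are separated by arcs of $A_R$ running across the channel that connect the two adjacent regions from Step 1.

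My first attempt at the crossing arcs is to take the arcs $\sigma_n$ on which $\operatorname{Im}Q_1(z)+\arg P_1(z)-\arg P_2(z)=(2n+1)\pi$. On such an arc $P_1e^{Q_1}$ and $P_2$ point in the same direction, so $|F_2|\ge|P_2|>R$ once $|z|$ is large. By the argument principle or by monotonicity of the phase along the channel, each $\sigma_n$ exists for large $n$ and crosses $T_k$, and the arguments of $P_1,P_2$ vary slowly, like $O(\log|z|)$. The components of $\{|F_2|\le R\}$ then lie in the bounded pieces of $T_k$ cut out by consecutive $\sigma_n$, each piece containing a zero of $F_2$ (Rouché applied to $P_1e^{Q_1}+P_2$). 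This yields infinitely many bounded complementary components, so $A_R$ is infinitely connected. The case where $P_1$ is constant is included in this argument.

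\textbf{Step 3 (connectedness).} The outer regions from Step 1, together with the crossing arcs $\sigma_n$ and a large circle $|z|=r$ on which $|F_2|>R$, give a connected set $K\subset A_R$. A suitable circle exists by choosing $r$ so that it passes through the good parts of the channels, which works for a sequence $r\to\infty$. Every component of $A_R$ is unbounded, by the maximum principle applied to $1/F_2$, since $F_2$ has no poles. Every unbounded piece of $A_R$ meets $K$, because the only other places it could go to infinity are the channel pieces, and these are bounded. Hence $A_R$ is connected. Every component of $\partial A_R$ lies in $\{|z|\le r_0\}$ or in one of the bounded channel pieces, so it is bounded. Together with infinite connectivity from Step 2, this shows that $\infty$ is a bov of $F_2$.
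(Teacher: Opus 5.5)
Your route differs from the paper's and can be completed, but Step~2 as written has a parity error that breaks it. On $\sigma_n=\{\operatorname{Im}Q_1+\arg P_1-\arg P_2=(2n+1)\pi\}$ the quotient $\frac{P_1}{P_2}e^{Q_1}$ is a \emph{negative} real number. So $P_1e^{Q_1}$ and $P_2$ point in opposite directions, and $|F_2|=|P_2|\,\bigl|1-|P_1e^{Q_1}/P_2|\bigr|$. In fact every zero of $F_2$ in the channel lies on one of these arcs. Hence these $\sigma_n$ are not contained in $A_R$, and the claim that each piece between consecutive $\sigma_n$ contains a zero is false.

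The fix is to use $2n\pi$. Let $G=Q_1+\log P_1-\log P_2$, a holomorphic branch far out in the channel. On $\sigma_n=\{\operatorname{Im}G=2n\pi\}$ one has $F_2=P_2(1+e^{\operatorname{Re}G})$, so $|F_2|\ge|P_2|>R$. The zeros then sit on the intermediate arcs $\{\operatorname{Im}G=(2n+1)\pi\}$, where $e^{\operatorname{Re}G}$ runs from $\approx 0$ to $\approx\infty$ across the channel, provided $N>|\deg P_1-\deg P_2|$. Choose $N$ this way in Step~1 as well; ``$N>\deg P_2$'' is not enough when $\deg P_1$ is large.

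You also only assert that the $\sigma_n$ exist and cross $T_k$, which is the heart of the argument. A clean justification: $G'(z)=d\,a z^{d-1}(1+O(1/z))$, so for large $|z|$ the map $G$ is univalent on a thin sector around the channel. It sends $T_k$ into a region of width $O(\log|w|)$ about the positive or negative imaginary axis, and $\sigma_n$ is the preimage of a horizontal segment $\operatorname{Im}w=2n\pi$ crossing that region.

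Two minor points. Bounded components of $A_R$ are excluded by the maximum principle for $F_2$ itself, not for $1/F_2$. The circle $|z|=r$ is unnecessary: the crossing arcs already join adjacent outer regions. So $K$ (outer regions plus all $\sigma_n$) is connected, every component of $\mathbb{C}\setminus K$ is bounded, and both connectedness of $A_R$ and boundedness of the components of $\partial A_R$ follow as you say.

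With this repair, here is how your argument compares with the paper's. The paper never examines $F_2^{-1}(D)$ directly. It uses the criterion of Lemma~\ref{Bov iff} ($f$ has bov iff $f(\gamma)$ is unbounded for every unbounded curve $\gamma$) and splits cases by the position of $\gamma$ relative to the fundamental sectors of $Q_1$:
\begin{itemize}
\item In an even (odd) closed sub-sector, $P_1e^{Q_1}$ (resp.\ $P_2$) dominates.
\item If $\gamma$ is asymptotic to a fundamental ray, $\operatorname{Im}Q_1$ takes all large values (of one sign) on $\gamma$. The paper uses this to pick $z_n\in\gamma$ at which $P_1e^{Q_1}(z_n)$ and $P_2(z_n)$ lie in a common narrow sector.
\end{itemize}
That second case is the same alignment idea as your corrected $\sigma_n$, applied pointwise on a given curve rather than globally.

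The paper's argument is shorter, needs no control of the global geometry of level curves, and inducts cleanly to the multi-term $F_l$ of Theorem~\ref{Generalization}, but it relies on the external criterion. Yours proves the definition directly and yields more information: the zeros of $F_2$ lie in logarithmic channels along the fundamental rays and are separated by explicit Jordan curves in $A_R$. The price is the univalence and level-curve analysis, which would become considerably harder with several exponential terms.
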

	
	\par To prove the existence of bov for $F_3$, we need a hypothesis on fundamental rays, which we now define.
	For each non-zero complex number $z_0$, it is possible to choose a branch of the argument such that it is continuous at $z_0$. For example, one may choose the principal argument if $z_0$ is not a negative real number and the branch $0\leq arg(z)< 2\pi$ otherwise. Denote the argument by $Arg$, and throughout this article, $Arg$ denotes argument modulo $2 \pi$.  For a polynomial $Q$ of degree $m$, let $\beta_m$ denote the argument of its leading coefficient. Define $\alpha_0(Q)=-\frac{\beta_m}{m}-\frac{\pi}{2m}$ and $\alpha_j(Q)=\alpha_0(Q)+\frac{\pi j}{m}$ for $j=1,2,\ldots,2m-1$. For each $j=0,1,2,\ldots,2m-1$, the ray $R_{\alpha_j(Q)}:=\{z:\operatorname{Arg}(z)=\alpha_j(Q)\}$ is called a \textit{fundamental ray} of $Q$; more precisely, it is referred to as the $j$-th fundamental ray of $Q$. By definition, any two fundamental rays of a polynomial $Q$ are disjoint. However, fundamental rays of two different polynomials $Q_1$ and $Q_2$ either coincide or are disjoint. Section \ref{Growth-exponential-polynomial} contains more details on fundamental rays.  
	\begin{theorem}\label{e^{Q_1 }+e^{Q_2 }+P_3}
		Let   $Q_1, Q_2$ and $P_3$ be non-constant polynomials, and $P_1$, $P_2$ are non-zero polynomials which are possibly constants. If $\deg(Q_1)\neq \deg(Q_2)$ and none of the fundamental rays of $Q_1$ coincide  with those of $Q_2$, then the exponential polynomial $F_3 =P_1\exp(Q_1)+P_2\exp(Q_2)+P_3$ has bov.
	\end{theorem}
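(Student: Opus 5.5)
To prove Theorem \ref{e^{Q_1 }+e^{Q_2 }+P_3}, I would follow the scheme behind Theorem \ref{P_1e^{Q_1}+P_2} and \cite{DasNayak 2024}. Fix $R>0$, large, and let $D=\{w:|w|>R\}\cup\{\infty\}$. It suffices to prove two things. First, $F_3^{-1}(D)$ contains a connected unbounded set which meets every sufficiently large circle $|z|=r$ and to which every component of $F_3^{-1}(D)$ is joined. Second, the complement $\{z:|F_3(z)|\le R\}$ has only bounded components, and there are infinitely many of them. Then $F_3^{-1}(D)$ is connected with bounded boundary components. It is infinitely connected because $F_3-a$ has infinitely many zeros for each $a$ (there is no finite asymptotic value, since $P_3$ is non-constant), so there are infinitely many bounded complementary pieces. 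By Lemma 2.3 of \cite{ChakraChakrabortyNayak 2016}, one such $R$ is enough.

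Assume without loss of generality that $m_1=\deg Q_1>m_2=\deg Q_2$. The key step is a sectorial growth analysis using the fundamental rays. The $2m_1$ fundamental rays of $Q_1$ cut the plane into sectors. In alternate sectors $\operatorname{Re}Q_1\to+\infty$ like $c|z|^{m_1}$ away from the rays, and in the others $\operatorname{Re}Q_1\to-\infty$; do the same for $Q_2$ with $2m_2$ rays.

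Since no fundamental ray of $Q_1$ coincides with one of $Q_2$, near every fundamental ray of either polynomial the other exponential term is, in a thin sector, either exponentially large or exponentially small. Consequently, away from a finite collection of thin sectors around the fundamental rays, one term dominates. Either $|P_1e^{Q_1}|$ dominates, which happens where $\operatorname{Re}Q_1>0$ because $m_1>m_2$ and $\operatorname{Re}Q_1$ grows like $|z|^{m_1}$; or $|P_2e^{Q_2}|$ dominates where $\operatorname{Re}Q_1<0<\operatorname{Re}Q_2$; or $|P_3|$ dominates where both real parts are negative. In every such region $|F_3|\to\infty$ uniformly, so the union of all these regions, intersected with $|z|>r_0$, lies in $F_3^{-1}(D)$.

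These regions are glued through the thin sectors, except that, inside each thin sector around a ray, $F_3$ behaves like a two-term function $Ae^{Q_i}+B$, where $B$ is the term that is dominant across the ray. There I would reuse the analysis of Theorem \ref{P_1e^{Q_1}+P_2}. Along the ray the solutions of $|F_3|\le R$ form a chain of small "islands" near the zeros of $P_ie^{Q_i}+B$. These are spaced roughly like solutions of $Q_i(z)\in \log(-B/P_i)+2\pi i\mathbb Z$, hence are bounded and pairwise separated. Arcs of $|z|=r$ pass between consecutive islands with $|F_3|>R$ on them. Concretely, I would choose radii $r_n\to\infty$ so that on each circle $|z|=r_n$ the ratio $|P_ie^{Q_i}/B|$ stays away from $1$ inside every thin sector, by steering $r_n$ so that $\operatorname{Im}Q_i$ avoids the resonant values $\arg(-B/P_i)$ mod $2\pi$. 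Then $|F_3|>R$ on the whole circle $|z|=r_n$ for all large $n$. These circles are connected and nested, and every point of $F_3^{-1}(D)$ outside a compact set lies in a dominance region, which is connected radially to some circle $|z|=r_n$. This gives connectedness of the unbounded part. Each component of $\{|F_3|\le R\}$ is trapped between two circles, so it is bounded. A bounded component of $F_3^{-1}(D)$ is impossible by the maximum principle, since $1/F_3$ would violate it.

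The main obstacle is the choice of the circles $|z|=r_n$. Inside the thin sector around a fundamental ray of $Q_1$, the competing term $B$ may itself be $P_2e^{Q_2}$, which oscillates. So the resonance condition involves $\operatorname{Im}Q_1-\operatorname{Im}Q_2$ together with the slowly varying phase of $P_2/P_1$, and there are $2m_1+2m_2$ such sectors on the same circle. The radius must avoid resonance in all of them at once. I would handle this by working in the variable $\operatorname{Im}Q_i$ along each ray, where $\operatorname{Im}Q_i$ increases with derivative of order $r^{m_i-1}$, much faster than the phases of the other factors vary. The set of bad radii in each sector then has density going to $0$, and a common good $r_n$ exists in each interval $[n,n+1]$. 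This is exactly where the hypotheses $m_1\ne m_2$ and that the fundamental rays are distinct are used: they prevent the two exponentials from having comparable modulus along a whole ray, where cancellation could create unbounded components of $\{|F_3|\le R\}$.
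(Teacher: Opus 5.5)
Your route is genuinely different from the paper's, and it works once the points below are repaired.

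\textbf{How the paper argues.} The paper never selects special curves. It checks the criterion of Lemma~\ref{Bov iff} for an arbitrary unbounded curve $\gamma$, splitting according to whether $\gamma$ is unbounded in an odd- or even-indexed closed sub-sector of $Q_1$, or is asymptotic to a fundamental ray of $Q_1$. The sector cases are settled by the domination estimates of Lemma~\ref{Growth-exppoly}, using $m_1>m_2$. The asymptotic cases are reduced to Theorem~\ref{P_1e^{Q_1}+P_2} for $P_1e^{Q_1}+P_3$ or $P_2e^{Q_2}+P_3$. The one delicate situation is $\gamma$ asymptotic to a ray of $Q_1$ lying in an even sector of $Q_2$. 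It is handled by contradiction: boundedness would force $\frac{P_1}{P_2}e^{Q_1-Q_2}\to-1$ along $\gamma$, whereas Lemma~\ref{asymptotic-to-ray} finds points of $\gamma$ where this quantity has argument near $\pi/2$. So in the paper the curve itself supplies points of favourable phase, and only qualitative asymptotics are needed.

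\textbf{What your route does differently.} You prove the stronger statement that $\min_{|z|=r_n}|F_3(z)|\to\infty$ along some sequence $r_n\to\infty$. This forces you to avoid phase resonance at the balance points of all $2m_1+2m_2$ thin sectors on the same circle, which is a quantitative equidistribution argument. In return you get a global picture: every component of $\{|F_3|\le R\}$ is confined between two consecutive circles. Once such circles exist for every $R$, Lemma~\ref{Bov iff} finishes immediately, since every unbounded curve meets $|z|=r_n$ for all large $n$. Your topological paragraph is then unnecessary.

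\textbf{Sub-claims that are wrong as written.}

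(i) On an arc crossing a fundamental ray, $|P_ie^{Q_i}/B|$ necessarily passes through $1$. What you need is that $P_ie^{Q_i}/B$ stays away from $-1$. Concretely, there is a balance point on $|z|=r$, unique in the thin sector because $\log|P_ie^{Q_i}/B|$ is strictly monotone in the angle there with angular derivative of order $r^{m_i}$. At that point the phase must be at distance at least $\delta$ from $\pi$ modulo $2\pi$. You also need that this phase drifts only by $o(1)$ across the window of angular width $O(r^{-m_i})$ where the two moduli are comparable. The phase to control is the one at the balance point, not on the ray. When $B=P_2e^{Q_2}$, the balance point is off the ray by an angle of order $r^{m_2-m_1}$. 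The phase there still has $r$-derivative $\asymp r^{m_1-1}$, and this is where $m_1>m_2$ enters.

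(ii) The proportion of bad radii per sector is of order $\delta$, not $o(1)$. Also, a good radius need not exist in every $[n,n+1]$. If $m_2=1$ and the leading coefficient of $Q_2$ is small compared with $\delta$, the phase attached to a ray of $Q_2$ moves so slowly that whole unit intervals are bad. Choose $\delta$ so small that the bad proportions, summed over all sectors, stay below $1$ on intervals $[N,2N]$. Arbitrarily large good radii are all you need.

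(iii) If you keep the topological route, the claim that every far-out point of $F_3^{-1}(D)$ lies in a dominance region is false, because points in the thin sectors are missed. Use instead that every component of $\{|F_3|>R\}$ is unbounded by the maximum principle. Hence each component meets the circles $|z|=r_n$, and each circle lies in a single component, so $F_3^{-1}(D)$ is connected. Likewise, the absence of finite asymptotic values does not follow from $P_3$ being non-constant; it follows from your circles.
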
 
	The assumption that $\deg(Q_1)\neq \deg(Q_2)$ in Theorem~\ref{e^{Q_1 }+e^{Q_2 }+P_3} is just a sufficient condition. If $\deg(Q_1)=\deg(Q_2)$ then $P_1e^{Q_1 }+P_2e^{Q _2 }+P_3$ may reduce to a polynomial, and Theorem~\ref{e^{Q_1 }+e^{Q_2 }+P_3} may fail. Taking $P_1=P_2 \equiv 1$ and   $Q_1-Q_2\equiv k\pi i$ for an odd integer $k$, it is seen that $e^{Q_1}+e^{Q_2}+P_3$ becomes $P_3$, a polynomial. Similarly, if $P_1\equiv 1$, $P_2\equiv  -1$ and $Q_1-Q_2\equiv k\pi i$ for an even integer $k$ then $e^{Q_1}-e^{Q_2}+P_3$ reduces to $P_3$. Moreover, Section \ref{Examples} provides examples of exponential polynomials of the form $F_3$ for which $\deg(Q_1)=\deg(Q_2)$. It is also interesting that $\deg(Q_1)\neq \deg(Q_2)$ is not enough to ensure that the fundamental rays of $Q_1$ and $Q_2$ do not coincide. The following remark makes it precise. 
	\begin{Remark}
		Let $\beta_i$ be the argument of the leading coefficient of $Q_i$ for $i=1,2$. Then the $j$-th fundamental ray of $Q_1$ coincides with the $k$-th fundamental ray of $Q_2$  if $\frac{-\beta_1}{m_1}-\frac{\pi}{2 m_1}+\frac{\pi j}{m_1} \equiv \frac{-\beta_2}{m_2}-\frac{\pi}{2 m_2}+\frac{\pi k}{m_2}$ mod ($2 \pi$) where $j \in \{ 0,1,2,\ldots, 2m_1 -1\}$ and $k \in \{0,1,2,\ldots ,2m_2 -1\}$. This is equivalent to the statement that $\frac{m_1 (\beta_2 +\frac{\pi}{2}) -m_2 (\beta_1 +\frac{\pi}{2})}{ \pi \gcd(m_1, m_2) }$ is an integer.
	\end{Remark} 

The next theorem extends the preceding results to exponential polynomials with an arbitrary number of terms. 
	\begin{theorem}\label{Generalization}
		For a natural number $l \geq 2,$
		let $Q_i$  be a non-constant polynomial, and  $P_i$ be a non-zero polynomial which is possibly  constant for $ i=1,2,\ldots,l-1$. Also, let $P_l$ be a non-constant polynomial.  If $\deg(Q_i)\neq \deg(Q_j)$  and none of the  fundamental rays of $Q_i$ coincide with those of $Q_j$ for any $i\neq j$, then the exponential polynomial $F_l = P_1 \exp(Q_1) +P_2\exp(Q_2)+\cdots+P_{l-1}\exp(Q_{l-1})+P_l$ has bov.
	\end{theorem}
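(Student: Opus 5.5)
Fix a large $R>0$ and set $D=\{w:|w|>R\}\cup\{\infty\}$. I will show that $F_l^{-1}(D)$ is connected, unbounded, has only bounded boundary components, and has infinitely many of them. This gives bov directly from the definition, and Lemma 2.3 of \cite{ChakraChakrabortyNayak 2016} then covers every other radius.

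\textbf{Growth on sectors.} The basis is the growth of $\exp(Q_i)$ along and between fundamental rays. Write $m_i=\deg(Q_i)$. The $2m_i$ fundamental rays of $Q_i$ cut the plane into sectors. On each sector, $\operatorname{Re}Q_i(z)$ behaves like $c|z|^{m_i}\cos(m_i\theta+\beta_i)$, so it alternates between tending to $+\infty$ and to $-\infty$ from one sector to the next. Near a fundamental ray $\operatorname{Re}Q_i$ changes sign, and there $\operatorname{Im}Q_i$ is monotone of size about $|z|^{m_i}$. Since no fundamental rays of different $Q_i$ coincide, take the union of all fundamental rays of all $Q_i$ and surround each ray by a thin curvilinear strip $S$ in which $\operatorname{Re}Q_i$ stays bounded. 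For $|z|$ large these strips are pairwise disjoint, because distinct rays are separated by a positive angle.
\begin{itemize}
\item Off all strips, each term $P_i e^{Q_i}$ is either exponentially large or exponentially small.
\item In a region where several terms are exponentially large, the term with the largest $m_i$ dominates, because the degrees are distinct. Its exponential growth $e^{c|z|^{m_i}}$ beats every other term there.
\item In a region where every exponential is small, $F_l\approx P_l$, which tends to $\infty$ because $P_l$ is non-constant.
\end{itemize}
So $|F_l|>R$ on the whole complement of the strips once $|z|$ is large.

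\textbf{Inside a strip.} Inside a strip around a fundamental ray of a single $Q_i$, all other exponentials are either huge or tiny. If another term is huge there (with larger degree, or simply with a larger modulus), then $|F_l|>R$ in that strip too. Otherwise $F_l\approx P_ie^{Q_i}+P_l+o(1)$, and we are reduced to the local two-term situation of Theorem~\ref{P_1e^{Q_1}+P_2}. In that situation the argument of \cite{DasNayak 2024} applies: along the strip, $\operatorname{Im}Q_i$ increases monotonically. Each time the argument of $P_ie^{Q_i}$ turns opposite to that of $P_l$, the moduli can nearly cancel. The small-value set $\{|F_l|\le R\}$ therefore consists of a sequence of bounded "islands" marching out along the strip. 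Between consecutive islands, arcs crossing the strip lie in $\{|F_l|>R\}$.

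\textbf{Assembling the conclusion.}
\begin{enumerate}
\item \emph{Connectedness.} Take a large circle $|z|=r_0$, with $r_0$ chosen to avoid all islands, and modify it into a closed curve that crosses each strip through one of these crossing arcs. Every point of $F_l^{-1}(D)$ outside this curve can be joined inside $\{|F_l|>R\}$ to the unbounded complement of the strips. The bounded part is handled by Picard-type or maximum-modulus reasoning: $F_l$ is entire, so no bounded component of $\{|F_l|>R\}$ can exist. Hence $F_l^{-1}(D)$ is connected.
\item \emph{Infinitely many boundary components.} By the argument principle/Rouché, comparing with $P_ie^{Q_i}+P_l$ on the small loops, each island contains zeros of $F_l$. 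So there are infinitely many bounded complementary components, i.e.\ infinitely many boundary components of $F_l^{-1}(D)$.
\item \emph{All boundary components bounded.} Every boundary component lies inside a single island or inside the large disk, so each is bounded.
\end{enumerate}

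The main obstacle is the strip analysis when a fundamental ray of $Q_i$ lies in a sector where a lower-degree exponential $e^{Q_j}$ has comparable size, i.e.\ $\operatorname{Re}Q_j\to+\infty$ while $|e^{Q_i}|$ is still moderate. Distinct degrees resolve this: on the strip around a ray of $Q_i$, $|e^{Q_i}|$ is at most $e^{O(|z|^{m_i-1})}$. If $m_j<m_i$, then $e^{Q_j}$, of size $e^{c|z|^{m_j}}$, may dominate. In that case $|F_l|>R$ on that strip outright, unless a ray of $Q_j$ passes through it, and the non-coincidence hypothesis excludes exactly that. I would first make these comparisons precise using the asymptotic expansion of $\operatorname{Re}Q_i$ along curves $\operatorname{Re}Q_i=\text{const}$.
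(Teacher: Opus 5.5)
\paragraph{Verdict.} Your route is different from the paper's. The paper never describes the set $\{|F_l|\le R\}$; it verifies the curve criterion of Lemma~\ref{Bov iff} by induction on $l$. As written, your argument has a genuine gap in the region decomposition.

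\paragraph{The false claim.} You assert that off the strips each $P_ie^{Q_i}$ is either exponentially large or exponentially small, so that $|F_l|>R$ on the whole complement of the strips. This is false. Between the thin strip around a fundamental ray of $Q_i$ and the closed sub-sectors $\overline{B^\epsilon_j(Q_i)}$ there is an intermediate zone, asymptotic to the ray. Across it, $\operatorname{Re}Q_i$ rises from $0$ to about $c|z|^{m_i}$.

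Suppose some $Q_j$ with $m_j<m_i$ has this ray inside an even-indexed sector of $Q_j$. Then in that zone $\operatorname{Re}Q_i$ must cross $\operatorname{Re}Q_j+\log|P_j/P_i|$, which is of order $|z|^{m_j}$. Along the crossing curve the two exponential terms have equal modulus and cancel. This produces infinitely many zeros of $F_l$ lying outside all your strips.

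\paragraph{A concrete counterexample.} Take $F=e^{z^2}+e^{z}+z$. It meets every hypothesis: the rays of $z^2$ are at $\pm\pi/4,\pm3\pi/4$, those of $z$ at $\pm\pi/2$. Apply Rouch\'e to $e^{z}\bigl(e^{z^2-z}+1\bigr)+z$. This gives a zero of $F$ near each solution of $z^2-z=(2k+1)\pi i$ close to $\arg z=\pi/4$. At these points $\operatorname{Re}(z^2)=\operatorname{Re}z\approx|z|/\sqrt2$. So both exponentials are huge, yet the top-degree term does not dominate.

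Your ``main obstacle'' paragraph looks at exactly this configuration, but only inside the strip. Non-coincidence of rays guarantees that $e^{Q_j}$ is uniformly huge on a neighbourhood of the ray of $Q_i$. It cannot prevent $|P_ie^{Q_i}|$ from passing through $|P_je^{Q_j}|$ just outside the strip.

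\paragraph{A secondary problem with the strips.} Strips on which $\operatorname{Re}Q_i$ stays bounded do not even contain the two-term islands when $\deg P_i\neq\deg P_l$. For example, the zeros of $e^z+z$ satisfy $\operatorname{Re}z=\log|z|$. You need strips of the form $|\operatorname{Re}Q_i|\le A\log|z|$.

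\paragraph{Consequence and repair.} Your connectedness step joins points, inside $\{|F_l|>R\}$, to the unbounded complement of the strips, so it rests on the false claim above. To repair the argument you must add a second kind of island. It lies along the curves $\operatorname{Re}(Q_i-Q_j)=\log|P_j/P_i|$, where $Q_j$ is the highest-degree exponent whose exponential is large near the ray.

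You then have to show each such island is bounded, using the crossing-arc device of the two-term case with $P_je^{Q_j}$ in place of $P_l$. In that zone $F_l=P_je^{Q_j}\bigl(1+\frac{P_i}{P_j}e^{Q_i-Q_j}+o(1)\bigr)$. On the level curves $\operatorname{Im}(Q_i-Q_j)+\arg(P_i/P_j)\in 2\pi\mathbb{Z}$ this gives $|F_l|\geq|P_je^{Q_j}|(1-o(1))>R$.

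This is exactly the ingredient the paper isolates as Lemma~\ref{asymptotic-to-ray} and uses in Case~4(b) of its proof. There it divides by $P_2e^{Q_2}$ and shows that $\frac{P_1}{P_2}e^{Q_1-Q_2}$ cannot tend to $-1$ along a curve asymptotic to a ray of $Q_1$.

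With this extra island type, and the ``background'' near each ray chosen correctly (either $P_l$ or the dominant large exponential), your geometric picture can be completed. Without it, the decomposition does not cover the regions where two exponentials compete.
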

	%%%%%%%%%%%%%%%%%%%%%%%%%%%
	Beginning with the classical works of Pólya \cite{Polya}, exponential polynomials have been extensively studied from the viewpoint of value distribution (see \cite{Steinmetz} and more recent investigations by Heittokangas, Ishizaki, Tohge and Wen \cite{Heittokangas et al. 2021, Heittokangas Wen, Heittokangas et al. 2023}). These studies mainly concern asymptotic growth, the distribution of zeros, the value distribution of exponential polynomials, and their roles in the theories of complex differential equations and oscillation theory. In contrast, the present work considers exponential polynomials from a dynamical perspective, thereby complementing our understanding of these objects.
	
	\par The key ingredient of our proofs is a careful analysis of the behavior of the exponential of a polynomial in the fundamental sectors - sectors bounded by the fundamental rays of the polynomial. 
	
	%%%%%%%%%%%%%%%%%%%%%%%%%%%
		\par The structure of the paper is as follows. Sections \ref{Sec2}  and \ref{Growth-exponential-polynomial}  discuss useful results concerning the relation of bov and Baker wandering domains, argument of polynomials along unbounded curves and growth of the exponential of a polynomial respectively. The proofs of Theorems \ref{P_1e^{Q_1}+P_2}, \ref{e^{Q_1 }+e^{Q_2 }+P_3} and \ref{Generalization} are given in Section \ref{Proofs}. In Section \ref{No BWD}, it is shown in Theorem~\ref{No-BWD-result} that $F_l$ without the restriction on the coincidence of fundamental rays of $Q_i$  cannot have any Baker wandering domain. Examples of exponential polynomials having bov but not satisfying the hypothesis of Theorem~\ref{e^{Q_1 }+e^{Q_2 }+P_3} are given in Section \ref{Examples}. The article concludes with a set of problems proposed for further investigation.  
	
	\section{ Baker omitted value and Baker wandering domains}\label{Sec2}
	After discussing some relevant facts on bov, we present some known useful lemmas in this section. 
	
	\par In 2016, Chakra et al. \cite{ChakraChakrabortyNayak 2016} proved that if there is a bov of a transcendental entire function $f$, then it must be $\infty$. Indeed, if $a \in\mathbb{C}$ is an omitted value of $f$  and $D$ is a disk centered at $a$, then each component of the set $f^{-1}(D)$ is unbounded (as $a$ is omitted) and simply connected (a consequence of the Maximum Modulus Principle). Thus, the boundary of each component of $f^{-1}(D)$ is unbounded, so $a$ is not a bov.
	\par    
	By saying an entire function has bov, we mean that it has bov at $\infty$. Also, it can be seen that the bov is the only asymptotic value of the function (see Theorem 2.1,~\cite{ChakraChakrabortyNayak 2016}). Later, in 2023, Ghora et al. proved that the set of all singular values of every entire function with bov is unbounded, by showing that the bov is always a limit point of critical values (see Lemma 2.5, \cite{GhoraNayakSahoo 2021}). Chakra et al. \cite{ChakraChakrabortyNayak 2016} established a necessary and sufficient condition for the existence of bov, which serves as a fundamental tool in the proofs of our main results. A curve $\gamma:[0,\infty)\to\mathbb{C}$ is said to be unbounded if for any $M>0$, there exists some $t_0\in[0,\infty)$ such that $|\gamma(t_0)|>M$. Throughout the article a curve is understood to be continuous unless stated otherwise.
	\begin{lemma}\label{Bov iff}
		An entire function $f$ has bov if and only if $f(\gamma)$ is unbounded for each unbounded (connected) curve $\gamma$.	
	\end{lemma}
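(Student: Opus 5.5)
We prove both directions separately, using the topological description of $f^{-1}(D)$ for $D=\{w:|w|>R\}\cup\{\infty\}$, that is, the sets $A_R=\{z:|f(z)|>R\}$ and their complements $K_R=\{z:|f(z)|\le R\}$. Recall the fact quoted from Lemma 2.3 of \cite{ChakraChakrabortyNayak 2016}: if $f$ has bov, then for every $R>0$ the set $A_R$ is connected (infinitely connected) and every boundary component is bounded.

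\emph{Necessity.} Assume $f$ has bov and let $\gamma$ be an unbounded curve. Suppose, for contradiction, that $f(\gamma)$ is bounded, say $|f|\le R$ on $\gamma$. Then $\gamma$ is a connected set contained in $K_R$, so it lies in a single component $C$ of $K_R$, and $C$ is unbounded.

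Each component of $K_R$ is the closure of a bounded complementary component of $A_R$ (up to the level set). By the maximum modulus principle, no component of $\{|f|<R\}$ can be a bounded region whose boundary avoids $A_R$ in a bad way. More concretely:
\begin{itemize}
\item Any component of $K_R$ has boundary contained in $\partial A_R$.
\item An unbounded component $C$ would force some boundary component of $A_R$ to be unbounded.
\end{itemize}
Indeed, $\partial C$ cannot be bounded. If it were, $C$ would contain a neighbourhood of $\infty$ minus a compact set, giving $|f|\le R$ near $\infty$, which contradicts Liouville. So $\partial C$ is unbounded. Since $\partial C$ is connected in $\widehat{\mathbb C}$ (use that $C$'s complement $A_R$ is connected, so by the unicoherence of the sphere the boundary of $C$ is connected), some boundary component of $A_R$ is unbounded. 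This contradicts bov.

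\emph{Sufficiency.} Assume $f(\gamma)$ is unbounded for every unbounded curve $\gamma$. Fix $R$. We claim that every component of $K_R$ is bounded.
\begin{itemize}
\item If some component $C$ were unbounded, we would like to build an unbounded curve inside a slightly larger set $K_{R'}$, with $R'>R$. This works because the interior of $K_{R'}$ is an open set containing $C$, and an unbounded connected compact-at-each-scale set in an open set can be followed by a path: cover $C\cap\{|z|\le n\}$ and connect within the component of $\{|f|<R'\}$ containing $C$, which is open and connected, hence path connected and unbounded.
\item A path to $\infty$ in that open set then has image bounded by $R'$, contradicting the hypothesis.
\end{itemize}
Hence all components of $K_R$ are bounded, so every boundary component of $A_R$ is bounded, and $A_R$ is connected, being the complement of a union of bounded "holes" whose union does not separate (each hole is bounded and closed). $A_R$ is infinitely connected because $f$ is transcendental. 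It has infinitely many zeros or $R$-points by Picard. If there were only finitely many holes, $f$ would be bounded away from $\infty$'s preimage structure, forcing $f$ to be a polynomial via the argument principle on large circles. Therefore $\infty$ is a bov.

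The main obstacle is the topological step in both directions: converting "unbounded component of $K_R$" into "unbounded path with bounded image" (sufficiency), and the connectedness of $\partial C$ (necessity). I would handle these with Janiszewski/unicoherence arguments, and with the enlargement $R\to R'$ to gain openness.
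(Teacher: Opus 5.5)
The paper gives no proof of this lemma; it is quoted from Chakra, Chakraborty and Nayak (2016), so your argument has to stand on its own. The architecture is the natural one: sublevel sets $K_R$, Liouville, and a path inside a component of $\{|f|<R'\}$. The curve-building step in the sufficiency direction, which you single out as the main obstacle, is actually immediate.

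\textbf{Main gap: connectedness of $A_R$.} You justify this by calling $A_R$ ``the complement of a union of bounded holes whose union does not separate (each hole is bounded and closed)''. That principle is false. The set $\bigcup_{n\ge1}\{|z|=n\}$ is closed and all its components are bounded, yet it disconnects the plane. Two ingredients are missing.
\begin{enumerate}
\item The maximum modulus principle: every component of $A_R$ is unbounded. On a bounded component $W$ one would have $|f|=R$ on $\partial W$ and $|f|>R$ inside.
\item A plane-topology argument that turns two components of $A_R$ into an unbounded connected piece of $\{|f|=R\}$. For instance, let $W_1\neq W_2$ be components of $A_R$, and let $E$ be the component of $\widehat{\mathbb C}\setminus W_1$ containing $W_2$. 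Then $\infty\in E$ because $W_2$ is unbounded. The set $\widehat{\mathbb C}\setminus E$ is a simply connected domain, so its boundary is a continuum. That boundary lies in $\partial W_1\cup\{\infty\}$, contains $\infty$, and contains finite points, since it separates $W_1$ from $W_2$. Now apply the boundary bumping theorem: for a continuum $X\ni\infty$, every component of $X\setminus\{\infty\}$ is unbounded. Hence this boundary contains an unbounded connected subset of $\{|f|=R\}\subset K_R$, contradicting that all components of $K_R$ are bounded.
\end{enumerate}

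\textbf{Necessity direction.} The same boundary bumping theorem is the unstated step here. Connectedness of $\partial C\cup\{\infty\}$ does not by itself give an unbounded component of $\partial C$, and you need that form to contradict bov. There is also a slip: $\mathbb C\setminus C$ is not $A_R$ but $A_R$ together with the other components of $K_R$. It is still connected, because every component of $K_R$ meets $\{|f|=R\}\subset\overline{A_R}$; a component contained in $\{|f|<R\}$ would be open and closed in $\mathbb C$. In addition, $\partial C\subset\{|f|=R\}$, since points of $C$ with $|f|<R$ are interior to $C$.

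\textbf{Infinite connectivity.} The sentence about Picard and the argument principle proves nothing as written. Replace it with the following. If $K_R$ had only finitely many components, all bounded, then $|f|>R$ outside a disk. So $1/f$ is bounded near $\infty$ and $f$ would be a polynomial. Boundary bumping also shows that $\{\infty\}$ is a component of $K_R\cup\{\infty\}$. Hence infinitely many components of $K_R$ really do give infinitely many complementary components of $A_R$.
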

	Here is a straightforward but useful remark.
	\begin{Remark}\label{unboundedpart-unboundedcurve}
		If an entire function $f$ has bov and $\gamma$ is an unbounded curve, then for every $M>0$, the set $\{f (z): z \in \gamma ~\mbox{and}~ |z|>M\}$ is unbounded.
	\end{Remark}
	
	It follows from Lemma~\ref{Bov iff} that no other finite asymptotic value can exist for a function in the presence of bov. Further, it is well-known that if a finite point has finitely many preimages under an entire function, then it must be an asymptotic value of the function. This leads to the following remark.
	
	\begin{Remark}\label{Rem2.1}
		If an entire function has bov, then every point, except infinity, has infinitely many preimages under $f$.
	\end{Remark}
	The iterative behaviour of an entire function is influenced by the bov. We first recall some standard notions. The set of points $z\in\mathbb{\widehat{C}}$ for which the sequence of iterates $\{f^n(z)\}_{n\geq 1}$ forms a normal family is called the Fatou set of $f$. The Julia set is the complement of the Fatou set in $\widehat{\mathbb{C}}$. A maximally connected subset of the Fatou set is called a Fatou component. For a Fatou component $U$ of $f$, let $U_n$ denote the Fatou component containing $f^n(U)$ for $n\in\mathbb{N}$.   A Fatou component $W$ is called a wandering domain if $W_n\bigcap W_m=\emptyset$ for all $m\neq n$. For two subsets $A,B$ of $\mathbb{C}$, we say $A$ surrounds  $B$ if there exists a bounded component of $\mathbb{{\widehat{C}}}\setminus A$   containing $B$.
	\begin{definition}
		A wandering domain $W$ is called a {\it{Baker wandering domain}} of an entire function $f$ if each $W_n$ is bounded, multiply connected, and there exists a natural number $N$ such that $W_{n+1}$ surrounds $W_n$ and $0$ for all $n\geq N$ and $f^n(z)\to \infty$ as $n\to \infty$ for all $z\in W$.
	\end{definition} 
	For more details on Baker wandering domain we refer the reader to the survey \cite{DasNayak 2026}. A necessary and sufficient condition for a Fatou component to be a Baker wandering domain is due to Baker (See Theorem 3.1 in \cite{Baker 1984}).
	\begin{lemma}\label{BWD-iff}
		A Fatou component of a transcendental entire function is a Baker wandering domain if and only if it is multiply connected. 
	\end{lemma}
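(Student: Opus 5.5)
The direction from Baker wandering domain to multiple connectivity is immediate from the definition, since $W=W_0$ is multiply connected. So the plan concerns the converse. Let $U$ be a multiply connected Fatou component of a transcendental entire $f$. Fix a Jordan curve $\gamma\subset U$ that is not null-homotopic in $U$. Then the bounded complementary component $\mathrm{int}(\gamma)$ contains a point $z_0$ of the Julia set $J(f)$.

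\textbf{Step 1: $f^n\to\infty$ on $U$.} Suppose some subsequence $f^{n_k}$ had a finite limit on $U$, or merely stayed bounded on $\gamma$. By the maximum modulus principle $f^{n_k}$ is then uniformly bounded on $\mathrm{int}(\gamma)$. I would rule this out using the expanding property near $z_0$. Repelling periodic points are dense in $J(f)$, and for any disc $\Delta\ni z_0$ the union $\bigcup_n f^n(\Delta)$ omits at most one point of $\mathbb{C}$. The aim is to upgrade this to a contradiction with uniform boundedness along the specific subsequence. This is where the hardest part of the classical argument lies, and I would follow Baker's original treatment here. By normality on $U$, every limit function is then $\equiv\infty$, and hence $f^n\to\infty$ locally uniformly on $U$.

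\textbf{Step 2: the images of $\gamma$ surround $0$ and are nested.} Since $f^n\to\infty$ on a neighbourhood $A\subset U$ of $\gamma$, for large $n$ the functions $u_n=\log|f^n|$ are positive harmonic on an annulus-like neighbourhood of $\gamma$. Harnack's inequality then gives $\max_\gamma|f^n|\le(\min_\gamma|f^n|)^{C}$ with $C$ independent of $n$. The curve $f^n(\gamma)$ must wind around some point of $J(f)$, otherwise the maximum principle would make $f^n$ bounded on $\mathrm{int}(\gamma)$ near $z_0$, against Step 1. Combined with the Harnack control, for large $n$ the curve $f^n(\gamma)$ lies in a thin annulus $\{r_n\le|w|\le r_n^{C}\}$ with $r_n\to\infty$, and it winds around $0$. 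In particular each $U_n$ is multiply connected. Moreover $f(\{|w|<r_n\})$ contains a disc of radius comparable to $M(r_n,f)$, and $M(r_n,f)$ exceeds $r_n^{C}$ by far because $f$ is transcendental. Hence $f^{n+1}(\gamma)$ surrounds $f^n(\gamma)$ and $0$.

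\textbf{Step 3: $U$ is wandering and each $U_n$ is bounded.} If $U_n$ were unbounded, then by Step 2 it would contain a curve surrounding $0$ together with arbitrarily far points. Applying $f$, the curve $f^{n+1}(\gamma)$ would contain in its interior both Julia points and points of $U_{n+1}$ tending to $\infty$. Using the relation $f^{n+1}(\mathrm{int}\,\gamma)\subset\mathrm{int}\,f^{n+1}(\gamma)$, this would force $J(f)$ to be bounded, which is impossible for transcendental $f$. So every $U_n$ is bounded.

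Suppose next that $U_m=U_n$ for some $m<n$. The nested surrounding from Step 2 shows that $U_{n}$ lies in the unbounded complementary component of $f^{m}(\gamma)$ while meeting it, or that $U_n$ contains curves going to $\infty$; either way it contradicts boundedness. Hence $U$ is wandering, and together with $f^n\to\infty$ and the surrounding property this is exactly the definition of a Baker wandering domain.

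The main obstacle is Step 1, namely excluding bounded subsequences on $\gamma$ when $\mathrm{int}(\gamma)$ meets $J(f)$. I would handle it via density of repelling periodic points and Cauchy estimates on the derivatives at such points, as in Baker's paper.
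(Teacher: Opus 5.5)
The paper gives no proof of this lemma; it quotes Theorem 3.1 of Baker (1984), and your outline follows the classical route to that theorem. Your Step 1 is actually the easy part. At a repelling periodic point $p\in\mathrm{int}(\gamma)$ of period $q$ one has $|(f^{n_k})'(p)|\ge c\,|(f^{q})'(p)|^{\lfloor n_k/q\rfloor}\to\infty$ with $c>0$, which contradicts the Cauchy estimates. The genuine gaps are in Steps 2 and 3.

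\textbf{Step 2.} The conclusion ``and it winds around $0$'' does not follow from what precedes it. It is true that $f^n(\gamma)$ lies in $\{r_n\le|w|\le r_n^{C}\}$ and encloses the Julia point $f^n(z_0)$; this follows from the argument principle, not from the reason you give, since a single $f^n$ is always bounded on $\mathrm{int}(\gamma)$. But that Julia point may lie in the annulus itself, which is huge in the Euclidean sense. What is missing is a Montel argument:
\begin{itemize}
\item The winding number of $f^n(\gamma)$ is constant on $\{|w|<r_n\}$ and counts the $w$-points of $f^n$ in $\mathrm{int}(\gamma)$.
\item If it vanished for all large $n$, then $f^n$ would omit $\{|w|<r_n\}\ni 0,1$ on $\mathrm{int}(\gamma)$, so $\{f^n\}$ would be normal at $z_0$, a contradiction.
\item Hence there are arbitrarily large $N$ for which $f^N(\gamma)$ winds around $0$, and one then propagates this by induction.
\end{itemize}
Your propagation step uses the claim that $f(\{|w|<r_n\})$ contains a disc of radius comparable to $M(r_n,f)$. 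This is unjustified, and in the form you would need (a disc containing $0$) it is false, e.g.\ for $e^z$. The correct step is as follows:
\begin{itemize}
\item The maximum principle gives $\max_\gamma|f^{n+1}|\ge M(r_n,f)$.
\item Harnack then gives $\min_\gamma|f^{n+1}|\ge M(r_n,f)^{1/C}>\max\{r_n^{C},|f(0)|\}$ for large $n$, because $\log M(r,f)/\log r\to\infty$.
\item Since $f^n$ has a zero in $\mathrm{int}(\gamma)$, the curve $f^{n+1}(\gamma)$ winds around $f(0)$.
\item Therefore it winds around the whole disc $\{|w|<\min_\gamma|f^{n+1}|\}\supset f^n(\gamma)\cup\{0\}$.
\end{itemize}

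\textbf{Step 3.} This is the more serious gap. Your boundedness argument does not work: the compact curve $f^{n+1}(\gamma)$ cannot enclose points of $U_{n+1}$ tending to $\infty$, and nothing there forces $J(f)$ to be bounded.

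What the nesting does give is a reduction. Suppose some $U_n$ with $n\ge N$ were unbounded. It is connected, contains $f^n(\gamma)$ (which lies inside $f^k(\gamma)$), and reaches outside, so it meets every $f^k(\gamma)$ with $k>n$. Hence $U_k=U_n$ for all $k\ge n$, i.e.\ $U_n$ would be a forward invariant multiply connected component on which $f^k\to\infty$.

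Ruling this out is the real content of Baker's theorem, and your proposal has no ingredient for it. One way to do it:
\begin{itemize}
\item By Schwarz--Pick, the hyperbolic distance in $U_n$ between $f^k(z)$ and $f^{k+1}(z)$ is non-increasing in $k$.
\item Since $U_n$ omits two points, $\rho_{U_n}(w)\ge c/(|w|\log|w|)$ for large $|w|$. Hence $\log\log|f^{k+1}(z)|-\log\log|f^{k}(z)|$ stays bounded.
\item On the other hand, your Harnack/maximum-modulus recursion gives $\log|f^{k+1}(z)|\ge C^{-1}\log M(|f^{k}(z)|^{1/C},f)$ for $z\in\gamma$, which forces these increments to tend to $\infty$.
\end{itemize}
Alternatively, you could cite Baker's 1975 result that unbounded invariant Fatou components of transcendental entire functions are simply connected.

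Your wandering argument relies on boundedness, so all of Step 3 rests on this missing ingredient. Once it is supplied, boundedness follows. Wandering follows too: if $U_m=U_n$ with $m<n$, then $U_m$ contains $f^{m+k(n-m)}(\gamma)$ for all $k$ and is unbounded. The fact that $U_{n+1}$ surrounds $U_n$ and $0$ then follows essentially as you indicate.
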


	In 2016, Chakra et al. \cite{ChakraChakrabortyNayak 2016} established that the existence of a Baker wandering domain is sufficient for the existence of the bov.   
	\begin{lemma}\label{BWD to Bov}
		If a transcendental entire function $f$ has a Baker wandering domain, then $f$ has bov.
	\end{lemma}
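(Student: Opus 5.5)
The plan is to verify the criterion of Lemma~\ref{Bov iff}: if $W$ is a Baker wandering domain of $f$, then $f(\gamma)$ is unbounded for every unbounded curve $\gamma$. The idea is to build, inside the wandering domains, a sequence of closed curves $\sigma_n$ that surround $0$, escape to $\infty$, and are forced to meet every unbounded curve. Applying $f$ to the intersection points will then produce points of $f(\gamma)$ of arbitrarily large modulus.

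\textbf{Construction of the curves.} By Lemma~\ref{BWD-iff}, $W$ is multiply connected. Choose a Jordan curve $\sigma\subset W$ that is not null-homotopic in $W$, i.e.\ $\sigma$ surrounds some point of the Julia set. Put $\sigma_n=f^n(\sigma)\subset W_n$. Since $\sigma$ is compact and $f^n\to\infty$ locally uniformly on $W$ (normality together with $f^n(z)\to\infty$ pointwise), we get
\[
m_n:=\min\{|w|: w\in\sigma_n\}\to\infty .
\]

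\textbf{Key step: $\sigma_n$ surrounds $0$ for all large $n$.} I expect this to be the main obstacle.
\begin{enumerate}[(a)]
\item First show that $\sigma_n$ is not null-homotopic in $W_n$ for large $n$. If $f^n(\sigma)$ were contractible in $W_n$, then, since $f^n(\sigma)$ is far from $0$ while Julia points lie inside the region bounded by $\sigma$, the maximum principle applied to $f^n$ on the interior of $\sigma$ should give a contradiction. The Julia point surrounded by $\sigma$ has iterates that do not tend to $\infty$ uniformly, or one can use the blowing-up property of the Julia set, whereas $|f^n|$ is large on $\sigma$. This is the classical argument of Baker, and I would follow his Theorem~3.1 in \cite{Baker 1984}.
\item Next, since $W_{n+1}$ surrounds $W_n$ and $0$ for $n\ge N$, each hole of $W_n$ contains the earlier domains together with $0$. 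Hence a non-contractible curve in $W_n$ surrounds $0$. This may need a short topological argument.
\end{enumerate}

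\textbf{Conclusion.} Let $\gamma$ be an unbounded curve. Because $m_n\to\infty$, for all large $n$ the starting point $\gamma(0)$ lies in the bounded component of $\widehat{\mathbb C}\setminus\sigma_n$ containing $0$. Since $\gamma$ is connected and unbounded, it must cross $\sigma_n$; choose $z_n\in\gamma\cap\sigma_n$. Then $f(z_n)\in\sigma_{n+1}$, so $|f(z_n)|\ge m_{n+1}\to\infty$. Thus $f(\gamma)$ is unbounded, and Lemma~\ref{Bov iff} shows that $f$ has bov.
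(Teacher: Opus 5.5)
Your overall framework is sound. The estimate $m_n\to\infty$ follows from locally uniform escape on the compact set $\sigma$, and your final paragraph correctly turns ``$\sigma_n$ separates $0$ from $\infty$'' into unboundedness of $f(\gamma)$. The gap is in the key step, at (b).

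\textbf{The gap.} The definition of a Baker wandering domain only says that \emph{one} bounded component of $\widehat{\mathbb C}\setminus W_{n+1}$ contains $W_n\cup\{0\}$. It says nothing about the other holes of $W_{n+1}$. Nothing in the definition rules out further holes, and infinitely connected Baker wandering domains do exist; such a $W_n$ has holes that do not contain $0$. A non-contractible curve in $W_n$ may surround only such a hole. So the implication ``non-contractible in $W_n$ $\Rightarrow$ surrounds $0$'' is false, and your chosen $\sigma$ may itself be of this kind. In addition, the argument sketched in (a) only shows that $\sigma_n$ surrounds some point of $J(f)$, not that it is non-contractible. That weaker fact would suffice if (b) were true, but (b) is not true. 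Hence (a) and (b) together do not establish that $\sigma_n$ surrounds $0$.

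\textbf{The repair.} It is short and needs neither (b) nor the nesting clause of the definition.
Let $D$ be the interior of $\sigma$. Since $\sigma$ is not null-homotopic in $W$, $D$ meets $\partial W\subset J(f)$.
By the open mapping theorem, $f^n(D)$ is contained in the union $K_n$ of $\sigma_n$ and the bounded components of $\mathbb C\setminus\sigma_n$.
Suppose $0$ lay in the unbounded component of $\mathbb C\setminus\sigma_n$ for all $n\ge n_0$. Then so would the whole disc $\{|w|<m_n\}$, which is connected and misses $\sigma_n$. This gives $|f^n|\ge m_n\to\infty$ uniformly on $D$, so $\{f^n\}$ would be normal on $D$, contradicting $D\cap J(f)\neq\emptyset$.
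Hence $\sigma_n$ surrounds $0$ for infinitely many $n$, and that is all your concluding paragraph uses.

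If you want this for all large $n$, use the density of periodic points in $J(f)$ to pick a periodic point $p\in D$. For each $n$, $f^n(p)$ lies in $K_n$. Once $m_n$ exceeds the size of the orbit of $p$, $f^n(p)$ also lies in $\{|w|<m_n\}$, which forces this disc, and hence $0$, into a bounded component of $\mathbb C\setminus\sigma_n$. Alternatively, you can quote Baker's theorem in its full form, which includes the statement that $f^n(\sigma)$ winds round $0$ for all large $n$.

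The paper does not prove this lemma itself; it imports it from the literature, so there is no internal proof to compare your route against.
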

	\par However, the converse of Lemma~\ref{BWD to Bov} does not hold in general. In fact, several examples of entire functions are known that admit bov but do not have any Baker wandering domain  (see \cite{ChakraChakrabortyNayak 2016, DasNayak 2024}). 
	
	An important remark is the following, and we shall use this in Section \ref{No BWD}.	
	\begin{Remark}\label{Rem2.2}
		From Remark \ref{Rem2.1} and Lemma \ref{BWD to Bov}, it follows that an entire function with finitely many zeros cannot have a Baker wandering domain.
	\end{Remark}

	\section{Growth of the exponential of a polynomial}\label{Growth-exponential-polynomial}
	In this section, we discuss some standard growth properties of the exponential of a polynomial and its asymptotic behavior near the fundamental rays of the polynomial.
	\par We first look at the behavior of a non-constant polynomial along  unbounded curves approaching rays  emanating from the origin. Let $\gamma:[0,\infty)\to\mathbb{C}$ be a curve satisfying $\gamma(t)\to\infty$ and $Arg(\gamma(t))\to\theta$ as $t\to\infty$, for some $\theta\in\mathbb{R}$. We study the limiting behavior of $Arg(P(\gamma(t)))$ as $t\to\infty$, where $P$ is a non-constant polynomial.
		\begin{lemma}\label{Arg(P) along ray}
		Let $P$ be a non-constant polynomial of degree $d$ with leading coefficient $a_d$. If $\gamma:[0,\infty)\to \mathbb{C}$ is a curve such that $\lim_{ t\to \infty}\gamma(t)=\infty$ and $\lim_{ t\to \infty}Arg(\gamma(t))=\theta$ for some $\theta \in \mathbb{R}$, then  $\lim_{ t\to \infty}Arg(P(\gamma(t)))=d\theta+Arg(a_d)$, where $Arg(.)$ is chosen such that it is continuous at $a_d$.
	\end{lemma}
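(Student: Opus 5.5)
The plan is to factor out the leading term and control the argument of the remaining factor. Write
\[
P(z)=a_d z^d\bigl(1+h(z)\bigr), \qquad h(z)=\sum_{k=0}^{d-1}\frac{a_k}{a_d}z^{k-d}.
\]
Since $\gamma(t)\to\infty$, we have $h(\gamma(t))\to 0$. In particular there is $t_0$ such that $|h(\gamma(t))|<1/2$ for all $t\geq t_0$. For those $t$, the factor $1+h(\gamma(t))$ lies in the disk $\{w:|w-1|<1/2\}$, which sits inside the right half-plane. On that disk the principal branch of the argument is continuous and vanishes at $1$, so $\arg\bigl(1+h(\gamma(t))\bigr)\to 0$.

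Next I will handle the factor $\gamma(t)^d$. By hypothesis $Arg(\gamma(t))\to\theta$, where $Arg$ is a branch continuous near $e^{i\theta}$. Hence $d\,Arg(\gamma(t))\to d\theta$, and $d\,Arg(\gamma(t))$ is an argument of $\gamma(t)^d$. Adding arguments of the three factors,
\[
Arg(a_d)+d\,Arg(\gamma(t))+\arg\bigl(1+h(\gamma(t))\bigr)
\]
is an argument of $P(\gamma(t))$, and this expression tends to $d\theta+Arg(a_d)$.

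The only delicate point is the branch bookkeeping. The paper's convention is that $Arg$ is taken modulo $2\pi$. Since the sum above is an argument of $P(\gamma(t))$ that converges, the values of $Arg(P(\gamma(t)))$ differ from it by integer multiples of $2\pi$. So if we choose $Arg$ continuous at the point $e^{i(d\theta+Arg(a_d))}$, convergence of that continuous branch follows as well. I expect this branch-consistency remark, and not any estimate, to be the main thing to state carefully. Equivalently, the whole statement can be phrased as
\[
\frac{P(\gamma(t))}{|P(\gamma(t))|}\to e^{i(d\theta+Arg(a_d))},
\]
which follows immediately from the factorization and the continuity of $w\mapsto w/|w|$ on $\mathbb{C}\setminus\{0\}$.
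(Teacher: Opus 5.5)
Your proof is correct and is essentially the paper's argument. The paper writes $Arg(P(\gamma(t)))=d\,Arg(\gamma(t))+Arg\bigl(P(\gamma(t))/\gamma(t)^d\bigr)$ and uses $P(\gamma(t))/\gamma(t)^d\to a_d$ together with continuity of the argument at $a_d$; this is your factorization $a_d\bigl(1+h(\gamma(t))\bigr)$, with continuity at $1$ in place of continuity at $a_d$. Your explicit remark that these identities hold only modulo $2\pi$, and so need a branch continuous at the limit point, is a detail the paper leaves implicit.
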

	\begin{proof}
		Let $P(z)=a_0+a_1z+\cdots+a_{d-1}z^{d-1}+a_dz^d$,  where $a_d\neq0$. Since $\lim_{t \to \infty}\gamma(t)\to\infty$, we have $\gamma(t)\neq0$ for all sufficiently large $t$. For each such $t$, we have
		\begin{equation}\label{limiting}
			Arg(P(\gamma(t)))=dArg(\gamma(t))
			+Arg\left(\frac{P(\gamma(t))}{\gamma(t)^d}\right).	\end{equation}
		Moreover,
		$$\frac{P(\gamma(t))}{\gamma(t)^d}=a_d+\frac{a_{d-1}}{\gamma(t)}+\frac{a_{d-2}}{\gamma(t)^2}
		+\cdots+\frac{a_0}{\gamma(t)^d}.
		$$
		It follows that
		$\lim_{t\to\infty}\frac{P(\gamma(t))}{\gamma(t)^d}=a_d$.
		By the continuity of the argument in a sufficiently small neighbourhood of the non-zero number $a_d$, we obtain
		$$\lim_{t\to\infty}Arg\left(\frac{P(\gamma(t))}{\gamma(t)^d}\right)= Arg\left(\lim_{t\to\infty}\frac{P(\gamma(t))}{\gamma(t)^d}\right)=Arg(a_d).$$ Since $\lim_{t\to\infty}Arg(\gamma(t))=\theta$, it follows from Equation~(\ref{limiting}) that  
		$\lim_{t\to\infty}Arg(P(\gamma(t)))= d\theta+Arg(a_d).$  	
	\end{proof}
	The next corollary is the sequence version of the preceding lemma.
	\begin{corollary}\label{Cor1}
		If $P$ is a non-constant polynomial of degree $d$ at least two, the argument of whose leading coefficient is $\beta_d$ and  $\{z_n\}_{n\geq 1}$ is any sequence satisfying $\lim_{ n\to \infty} z_n=\infty$ such that for some $\theta\in\mathbb{R}$, $Arg(z_n)\to\theta$ as $n\to\infty$, then $Arg(P(z_n))\to d\theta+\beta_d$ as $n\to\infty$.
	\end{corollary}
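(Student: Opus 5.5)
This corollary follows from Lemma~\ref{Arg(P) along ray} by turning the sequence into a curve. Given $\{z_n\}$ with $z_n\to\infty$ and $Arg(z_n)\to\theta$, I will build a curve $\gamma:[0,\infty)\to\mathbb{C}$ with $\gamma(n)=z_n$ for each $n$ (reindexing from $n=1$ as needed), $\gamma(t)\to\infty$ and $Arg(\gamma(t))\to\theta$ as $t\to\infty$. The lemma then gives $Arg(P(\gamma(t)))\to d\theta+\beta_d$. Restricting to integer times $t=n$ yields $Arg(P(z_n))\to d\theta+\beta_d$, with the argument chosen continuous at the leading coefficient exactly as in the lemma.

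The only real work is building $\gamma$ so that the argument stays controlled between consecutive points. A straight segment from $z_n$ to $z_{n+1}$ can pass near the origin or swing through a large angle, so I will not use segments. Instead I write $z_n=r_ne^{i\phi_n}$ with $r_n\to\infty$ and $\phi_n\to\theta$, where $\phi_n$ is taken in a fixed continuous branch near $\theta$; this is possible for large $n$. On $[n,n+1]$ I set
\[
\gamma(t)=\bigl((n+1-t)r_n+(t-n)r_{n+1}\bigr)\exp\bigl(i((n+1-t)\phi_n+(t-n)\phi_{n+1})\bigr).
\]
This interpolates modulus and angle separately, so $|\gamma(t)|\ge\min(r_n,r_{n+1})\to\infty$. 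The angle of $\gamma(t)$ also lies between $\phi_n$ and $\phi_{n+1}$, so it tends to $\theta$. Finitely many initial terms, where $\phi_n$ might not yet be near $\theta$ or $z_n$ might be $0$, can be discarded, since they affect neither the hypothesis nor the conclusion.

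Alternatively, the corollary can be proved directly by repeating the proof of Lemma~\ref{Arg(P) along ray} with $\gamma(t)$ replaced by $z_n$. One writes $Arg(P(z_n))=d\,Arg(z_n)+Arg(P(z_n)/z_n^d)$ modulo $2\pi$, observes that $P(z_n)/z_n^d\to a_d\neq0$, and uses continuity of the argument near $a_d$. I would present this direct argument, since it avoids the curve construction altogether; the only subtlety is that the identity holds modulo $2\pi$, which matches the paper's convention that $Arg$ is taken mod $2\pi$. The hypothesis $d\ge2$ is not used, since the argument works for every $d\ge1$. I expect no genuine obstacle here.
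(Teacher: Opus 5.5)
Your proposal is correct, and both routes you sketch work. The first is the paper's own route: the paper also turns the sequence into a curve and invokes Lemma~\ref{Arg(P) along ray}. It joins consecutive points by straight segments and simply asserts that $\gamma(t)\to\infty$ and $Arg(\gamma(t))\to\theta$. Your worry about segments only matters for finitely many $n$. Once $|\phi_n-\theta|,|\phi_{n+1}-\theta|<\delta<\pi/2$, the segment stays in the convex sector of half-opening $\delta$ around the ray of argument $\theta$. Every point $w$ on it satisfies $\Re(we^{-i\theta})\ge\min(r_n,r_{n+1})\cos\delta$, so the paper's construction is in fact fine. Your polar interpolation just makes both limit properties immediate rather than leaving them unchecked. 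The direct argument you say you would present is genuinely more economical. It shows that the proof of Lemma~\ref{Arg(P) along ray} never uses continuity or connectedness of $\gamma$, only the two limits. So the corollary is the same computation along a discrete parameter, and no curve is needed. The only care required is the point you flag: $d\,Arg(z_n)\to d\theta$ must be read modulo $2\pi$, via real representatives $\phi_n\to\theta$. Your remark that the hypothesis $d\ge 2$ is superfluous is also right, and it matters. The corollary is later applied to $P_1$ and $P_2$ in the proof of Theorem~\ref{P_1e^{Q_1}+P_2}, and to the numerator and denominator of $R$ in Lemma~\ref{asymptotic-to-ray}. There the degrees can be $0$ or $1$; degree $0$ is trivial.
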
	
	\begin{proof}
		Let $\{z_n\}_{n\geq 1}$ be any sequence satisfying $\lim_{ n\to \infty} z_n=\infty$ such that $Arg(z_n)\to\theta$ as $n\to\infty$. Join   $z_n$ and $z_{n+1}$ by a line segment $L_n$ defined by $L_n (t)= (1-t+n)z_n +(t-n)z_{n+1},~ n \leq t \leq n+1$. Let  $\gamma:[1,\infty)\to \mathbb{C}$ be defined by $\gamma(t)=L_n(t),$ for $n \leq t \leq n+1$. Then, $\gamma$    
		is a continuous curve such that $\lim_{ t\to \infty}\gamma(t)=\infty$ and $\lim_{ t\to \infty}Arg(\gamma(t))=\theta$, then $ArgP(\gamma(t))   \to \beta_d+d\theta$ as $z\to\infty$ along $\gamma$. The rest follows from the Lemma \ref{Arg(P) along ray}. 
	\end{proof}
	\par In order to analyze the growth of exponential of a polynomial, we need to fix some notations. Let $Q$ be a polynomial with degree $m$ and $\beta_m$ denote the argument of its leading coefficient. Denote $\alpha_0(Q)=-\frac{\beta_m}{m}-\frac{\pi}{2m}$,  $\alpha_j(Q)= \alpha_0(Q) +\frac{\pi j}{m}$ for  $j=1,2,3,\ldots,2m-1$ and denote the sector $\{z\in\mathbb{C}: \alpha_0(Q)<Arg(z)<\alpha_1(Q)\}$ by $A_0(Q)$. Define \begin{equation}\label{A_j(P)}
		A_j(Q)=\{e^{\frac{i\pi}{m}}z: z\in A_{j-1}(Q)\} ~\mbox{for}~  j=1,2,3,\ldots,2m-1. 
	\end{equation}
	Note that $\{0\}\cup R_{\alpha_j(Q)}\cup R_{\alpha_{j+1}(Q)}$ constitutes the boundary of $A_j(Q)$. Moreover, the collection $\{A_0(Q),A_1(Q),A_2(Q),\ldots,A_{2m-1}(Q)\}$ together with $\bigcup_{j=0}^{2m-1}R_{\alpha_j(Q)} \cup \{0\}$ forms a partition of $\mathbb{C}$, and we call this  the {\it{fundamental partition}} (of the complex plane) with respect to $Q$. We  consider a closed sub-sector of $A_j(Q)$ given by
	\begin{equation}\label{closed subsector}
		\overline{B^\epsilon_j(Q)}=\left\{z:\alpha_j(Q)+\frac{\epsilon}{m}\leq Arg(z)\leq\alpha_{j+1}(Q)-\frac{\epsilon}{m}\right\},
	\end{equation} 
	where $0< \epsilon <\frac{\pi}{2m} \leq  \frac{\pi}{2}$ is chosen to be sufficiently small as per the requirement. For a cubic polynomial $Q$ with a positive leading coefficient, the six sectors and the corresponding closed sub-sectors are given in Figure~\ref{Fig2}.
	
	\par  Now for each $j$, we discuss the behavior of $\exp(Q)$ on these closed sub-sectors $\overline{B^\epsilon_j(Q)}$ and near the boundary of $A_j(Q)$. Though $0< \epsilon < \frac{\pi}{2}$ is enough to make sense of the closed sub-sector of $A_j (Q)$, we take $\epsilon$ in a smaller interval $(0, \frac{\pi}{4})$ for a purpose to be served in the proof of Theorem~\ref{P_1e^{Q_1}+P_2} (see  Lemma~\ref{Growth-exppoly}(1)).	To prove the theorems and explain the example, Lemma~\ref{Growth-exppoly}  will be used. Though this is already known (see p. 254, \cite{Markushevich}), we include it along with its proof for better clarification and completeness.
	
		\begin{figure}
		\begin{center}
			\begin{tikzpicture}[scale=0.9]
				\fill[white!30]
				(0,0) -- (3,1.732) -- (0,3) -- cycle;
				\fill[white!30]
				(0,0) -- (-3,1.732) -- (-3,-1.732) -- cycle;
				\fill[white!30]
				(0,0) -- (0,-3) -- (3,-1.732) -- cycle;
				\fill[white!30]
				(0,0) -- (3,1.732) -- (3,-1.732) -- cycle;
				\fill[white!30]
				(0,0) -- (0,3) -- (-3,1.732) -- cycle;
				\fill[white!30]
				(0,0) -- (0,-3) -- (-3,-1.732) -- cycle;
				\draw[<->,thick] (0,-4)--(0,4) node[above]{};
				\draw[black, ->,thick] (0,0)--(4,2.309) node[right]{};
				\draw[black, ->,thick] (0,0)--(4,-2.309) node[right]{};
				\draw[black, ->,thick] (0,0)--(4,2.309) node[right]{};
				\draw[black, ->,thick] (0,0)--(4,2.309) node[right]{};
				\draw[black, ->,thick] (0,0)--(-4,-2.309) node[right]{};
				\draw[black, ->,thick] (0,0)--(-4,2.309) node[right]{};
				\draw[black, ->,thick] (0,0)--(0,4) node[right]{};
				\draw[black, ->,thick] (0,0)--(0,-4) node[right]{};
				\draw[blue, ->, line width=0.7pt, dotted] (0,0)--(2.5,2.1324) node[right]{};
				\draw[blue, ->, line width=0.7pt, dotted] (0,0)--(-2.5,-2.1324) node[right]{};
				\draw[blue, ->, line width=0.7pt, dotted] (0,0)--(0.52,2.9714) node[right]{};
				\draw[blue, ->, line width=0.7pt, dotted] (0,0)--(-0.52,-2.9714) node[right]{};
				\draw[blue, ->, line width=0.7pt, dotted] (0,0)--(-0.52,2.9714) node[right]{};
				\draw[blue, ->, line width=0.7pt, dotted] (0,0)--(0.52,-2.9714) node[right]{};
				\draw[blue, ->, line width=0.7pt, dotted] (0,0)--(-2.67,2.06) node[right]{};
				\draw[blue, ->, line width=0.7pt, dotted] (0,0)--(2.67,-2.06) node[right]{};
				\draw[blue, ->, line width=0.7pt, dotted] (0,0)--(3.1,1.181) node[right]{};
				\draw[blue, ->, line width=0.7pt, dotted] (0,0)--(3.1,-1.181) node[right]{};
				\draw[blue, ->, line width=0.7pt, dotted] (0,0)--(-3.1,1.181) node[right]{};
				\draw[blue, ->, line width=0.7pt, dotted] (0,0)--(-3.1,-1.181) node[right]{};
				\def\x{0}
				\def\y{0}
				\node[anchor = south, font=\scriptsize] at (\x+4.7, \y+2.0) {$R_{\alpha_1(Q)}$};
				\node[anchor = south, font=\scriptsize] at (\x-4.5, \y-2.9) {$R_{\alpha_4(Q)}$};
				\node[anchor = south, font=\scriptsize] at (\x+4.7, \y-2.9) {$R_{\alpha_0(Q)}$};
				\node[anchor = south, font=\scriptsize] at (\x-4.5, \y+2.3) {$R_{\alpha_3(Q)}$};
				\node[anchor = south, font=\scriptsize] at (\x+4.0, \y+0.7) {$R_{\alpha_1(Q)-\frac{\epsilon}{3}}$};
				\node[anchor = south, font=\scriptsize] at (\x+2.9, \y+2.0) {$R_{\alpha_1(Q)+\frac{\epsilon}{3}}$};
				\node[anchor = south, font=\scriptsize] at (\x+0.9, \y+2.9) {$R_{\alpha_2(Q)-\frac{\epsilon}{3}}$};
				\node[anchor = south, font=\scriptsize] at (\x-0.9, \y+2.9) {$R_{\alpha_2(Q)+\frac{\epsilon}{3}}$};
				\node[anchor = south, font=\scriptsize] at (\x-2.5, \y+2.0) {$R_{\alpha_3(Q)-\frac{\epsilon}{3}}$};
				\node[anchor = south, font=\scriptsize] at (\x-3.9, \y+0.9) {$R_{\alpha_3(Q)+\frac{\epsilon}{3}}$};
				\node[anchor = south, font=\scriptsize] at (\x-2.5, \y-2.7) {$R_{\alpha_4(Q)+\frac{\epsilon}{3}}$};
				\node[anchor = south, font=\scriptsize] at (\x-4.0, \y-1.5) {$R_{\alpha_4(Q)-\frac{\epsilon}{3}}$};
				\node[anchor = south, font=\scriptsize] at (\x-0.9, \y-3.6) {$R_{\alpha_5(Q)-\frac{\epsilon}{3}}$};
				\node[anchor = south, font=\scriptsize] at (\x+0.9, \y-3.6) {$R_{\alpha_5(Q)+\frac{\epsilon}{3}}$};
				\node[anchor = south, font=\scriptsize] at (\x+2.5, \y-2.6) {$R_{\alpha_0(Q)-\frac{\epsilon}{3}}$};
				\node[anchor = south, font=\scriptsize] at (\x+4.0, \y-1.5) {$R_{\alpha_0(Q)+\frac{\epsilon}{3}}$};
				\node[anchor = south, font=\scriptsize] at (\x+0.1, \y+4.0) {$R_{\alpha_2(Q)}$};
				\node[anchor = south, font=\scriptsize] at (\x+0.1, \y-4.6) {$R_{\alpha_5(Q)}$};
				\draw[<->] (1.75,0.7) arc[start angle=20, end angle=-20, radius=2];
				\draw[<->] (1.42,1.17) arc[start angle=35, end angle=78, radius=1.83];
				\draw[<->] (-0.30,1.9) arc[start angle=90, end angle=151, radius=1.44];
				\draw[<->] (-1.78,0.72) arc[start angle=150, end angle=209, radius=1.4];
				\draw[<->] (-1.51,-1.26) arc[start angle=220, end angle=265, radius=1.8];
				\draw[<->] (0.3,-1.9) arc[start angle=280, end angle=323, radius=2];
				\node[anchor = south, font=\scriptsize] at (\x+2.4, \y-0.4) {$\overline{B^\epsilon_0(Q)}$};
				\node[anchor = south, font=\scriptsize] at (\x-2.5, \y-0.4) {$\overline{B^\epsilon_3(Q)}$};
				\node[anchor = south, font=\scriptsize] at (\x+1.2, \y+1.6) {$\overline{B^\epsilon_1(Q)}$};
				\node[anchor = south, font=\scriptsize] at (\x-1.2, \y+1.6) {$\overline{B^\epsilon_2(Q)}$};
				\node[anchor = south, font=\scriptsize] at (\x+1.2, \y-2.4) {$\overline{B^\epsilon_5(Q)}$};
				\node[anchor = south, font=\scriptsize] at (\x-1.2, \y-2.4) {$\overline{B^\epsilon_4(Q)}$};
			\end{tikzpicture}
		\end{center}
		\caption{{\label{Fig2}
				Fundamental partition of the plane with respect to a polynomial $Q$ with degree $m=3$ and with positive leading coefficient. The six sectors $A_j(Q), j=0,1,2,3,4,5$  as defined in (\ref{A_j(P)}) are bounded by the solid rays, whereas the corresponding closed sub-sectors $\overline{B_j(Q)}$ are bounded by dotted (blue) rays.  For each $j=0, 1,2,3,\ldots ,2m-1$,   $R_{\alpha_j(Q)}=\{z:Arg(z)=\alpha_j(Q)\} $ is called a fundamental ray.  }}
	\end{figure}

	\begin{lemma}\label{Growth-exppoly}
		Let $Q$ be a polynomial with degree $m$ and leading coefficient $b_m$. For $0<\epsilon <\frac{\pi}{4}$, let $\overline{B^\epsilon_j(Q)}$  be defined as in Equation~(\ref{closed subsector}) for  $j=0,1,2,\ldots,2m-1$. Then, there exists $r_\epsilon >0$ such that the following statements are true for all $z \in \overline{B^\epsilon_j(Q)}$ with $|z|>r_\epsilon$.
		\begin{enumerate}
			\item If $j$ is even then 	$|\exp(Q(z))|>\exp\left(|b_m||z|^m(1-\epsilon)\sin\epsilon \right)$.
			\item  If $j$ is odd then 	$|\exp(Q(z))|<\exp\left(-|b_m||z|^m(1+\epsilon)\sin\epsilon \right)$.
		\end{enumerate} In particular, 
		$|\exp(Q(z))| \to \infty$ or $0$ as $z \to \infty$ in $\overline{B^\epsilon_j(Q)}$ for $j$ even or odd, respectively.
	\end{lemma}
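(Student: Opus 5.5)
The plan is to work directly with $|\exp(Q(z))|=\exp(\operatorname{Re} Q(z))$ and to compare $\operatorname{Re} Q(z)$ with the real part of its leading term. Write $Q(z)=b_mz^m+S(z)$ with $\deg S\leq m-1$, and fix a constant $C>0$ such that $|S(z)|\leq C|z|^{m-1}$ for $|z|\geq 1$. For $z=re^{i\theta}$ we have $\operatorname{Re}(b_mz^m)=|b_m|r^m\cos(m\theta+\beta_m)$. The first step is therefore purely trigonometric: we determine the range of $\phi:=m\theta+\beta_m$ when $z\in\overline{B^\epsilon_j(Q)}$. By the definition of $\alpha_j(Q)$, the condition $\alpha_j(Q)+\frac{\epsilon}{m}\leq \theta\leq \alpha_{j+1}(Q)-\frac{\epsilon}{m}$ is equivalent to
$$-\tfrac{\pi}{2}+j\pi+\epsilon\;\leq\;\phi\;\leq\;\tfrac{\pi}{2}+j\pi-\epsilon .$$
If $j$ is even, then $\phi$ lies, modulo $2\pi$, in $[-\frac{\pi}{2}+\epsilon,\frac{\pi}{2}-\epsilon]$, so $\cos\phi\geq\cos(\frac{\pi}{2}-\epsilon)=\sin\epsilon$. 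If $j$ is odd, then $\phi$ lies, modulo $2\pi$, in $[\frac{\pi}{2}+\epsilon,\frac{3\pi}{2}-\epsilon]$, so $\cos\phi\leq-\sin\epsilon$. Since the branch of $Arg$ only shifts $\theta$ by multiples of $2\pi$, and hence $\phi$ by multiples of $2\pi m$, this step does not depend on the branch chosen.

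The second step absorbs the lower-order terms. For $j$ even,
$$\operatorname{Re} Q(z)\geq |b_m|r^m\sin\epsilon-Cr^{m-1}.$$
Choose $r_\epsilon\geq 1$ so large that $Cr^{m-1}<\epsilon|b_m|r^m\sin\epsilon$ for all $r>r_\epsilon$; it suffices to take $r_\epsilon>\frac{C}{\epsilon|b_m|\sin\epsilon}$. Then $\operatorname{Re}Q(z)>|b_m|r^m(1-\epsilon)\sin\epsilon$, and exponentiating gives (1). For $j$ odd, the same estimate gives
$$\operatorname{Re}Q(z)\leq-|b_m|r^m\sin\epsilon+Cr^{m-1}<-|b_m|r^m(1-\epsilon)\sin\epsilon$$
for $r>r_\epsilon$, hence $|\exp(Q(z))|<\exp(-|b_m||z|^m(1-\epsilon)\sin\epsilon)$. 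The ``in particular'' statement follows at once in both cases, because $|b_m|r^m(1\pm\epsilon)\sin\epsilon\to\infty$.

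The main obstacle is the constant in (2) as stated, namely $(1+\epsilon)$ rather than $(1-\epsilon)$. This is not merely a difficulty of the argument: the stated bound can fail. On the boundary rays of $\overline{B^\epsilon_j(Q)}$ one has exactly $\cos\phi=-\sin\epsilon$. For instance, if $Q=b_mz^m$ is a monomial, then $S\equiv0$ and $\operatorname{Re}Q(z)=-|b_m|r^m\sin\epsilon$ there, which is not less than $-|b_m|r^m(1+\epsilon)\sin\epsilon$. So the stated $(1+\epsilon)$ is false in general. The strongest inequality this method gives in (2) is the one with $(1-\epsilon)$; equivalently, one can shrink the sub-sector slightly to pick up a margin $\sin\epsilon'$ with $\epsilon'>\epsilon$. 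Either version yields the decay $|\exp(Q(z))|\to0$, and that decay is all that is needed later. I would therefore prove (2) with the factor $(1-\epsilon)$ and record this correction, keeping (1) exactly as stated.
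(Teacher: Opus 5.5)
Your proof is correct and is essentially the paper's argument: use $|\cos(m\theta+\beta_m)|\geq\sin\epsilon$ on the closed sub-sector and let the leading term dominate. The only difference is that you absorb the lower-order terms additively via $C r^{m-1}$, whereas the paper factors out $|b_m|r^m\cos(\beta_m+m\theta)$ and bounds the relative error by $\epsilon$. Your correction of (2) to the factor $(1-\epsilon)$ is right. The paper's own proof in fact ends with $|\exp(Q(z))|<\exp\left(-|b_m||z|^m(1-\epsilon)\sin\epsilon\right)$. Your monomial example on the boundary ray shows the stated $(1+\epsilon)$ bound fails, and only the decay $|\exp(Q(z))|\to 0$ is used later.
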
 
	\begin{proof}
		Let $0<\epsilon <\frac{\pi}{4}$ and  $Q(z)=b_{0}+b_{1}z +\cdot\cdot\cdot+b_{m}{z }^m$, where $b_m \neq 0$. If $b_k=|b_k| e^{i\beta_k}$ and $z=re^{i\theta}$ then $Q(z)=\sum_{k=0}^{m}|b_k| r^ke^{i(\beta_k+k\theta)}$ and therefore, $|\exp(Q(z))|=\left|\exp\left(\sum_{k=0}^{m}|b_k| r^ke^{i(\beta_k+k\theta)}\right)\right|$, which is nothing but $\exp\left(\sum_{k=0}^{m}|b_k|r^k\cos{(\beta_k+k\theta)}\right)$. This gives that
		\begin{equation}\label{|e^P|}
			|\exp(Q(z))|=\exp\left(|b_m| r^m\cos{(\beta_m+m\theta)}\left(1+\sum_{k=0}^{m-1}\frac{|b_k|\cos{(\beta_k+k\theta)}}{|b_m| r^{m-k}\cos{(\beta_m+m\theta)}}\right)\right).
		\end{equation}
		For every $z=r e^{i\theta} \in\overline{B^\epsilon_j(Q)}$, we have $$  \beta_m+m\theta \in  I_j := (j\pi-\frac{\pi}{2}+\epsilon, j\pi+\frac{\pi}{2}-\epsilon).$$ 
		It is straightforward to verify that $ \cos x>0$ for all $x \in I_j$ and even $j$, whereas $\cos x<0$ for all $x \in I_j$ and odd $j$.
		
		For $j$ even, $\cos x$ increases in $(j\pi-\frac{\pi}{2}+\epsilon,j\pi)$ and then decreases in $(j\pi,j\pi+\frac{\pi}{2}-\epsilon)$. If $j\pi-\frac{\pi}{2}+\epsilon < \beta_m+m\theta \leq j\pi$ then $\cos(\beta_m+m\theta)>  \cos(j\pi-\frac{\pi}{2}+\epsilon)=\sin\epsilon$. If $j\pi < \beta_m+m\theta < j \pi +\frac{\pi}{2} -\epsilon $ then $\cos(\beta_m+m\theta)> \cos(j\pi+\frac{\pi}{2}-\epsilon)=\sin\epsilon$ (see Figure ~\ref{cosine}).
		Therefore, we have
		\begin{equation}\label{even inequality}
			\cos(\beta_m+m\theta) >  \sin\epsilon ~\mbox{whenever $\beta_m+m\theta \in I_j$ for an even}~  j.
		\end{equation}
		\par Similarly,  for odd $j$, $\cos x$ decreases in $(j\pi-\frac{\pi}{2}+\epsilon,j\pi)$ and then increases in $(j\pi,j\pi+\frac{\pi}{2}-\epsilon)$. If $\beta_m+m\theta \leq j\pi$ then $\cos(\beta_m+m\theta)<  \cos(j\pi-\frac{\pi}{2}+\epsilon)=-\sin\epsilon$. If $\beta_m+m\theta > j\pi$ then $\cos(\beta_m+m\theta)< \cos(j\pi+\frac{\pi}{2}-\epsilon)=-\sin\epsilon$ (see Figure~\ref{cosine}), and we have
		\begin{equation}\label{odd inequality}
			\cos(\beta_m+m\theta) < -\sin\epsilon ~\mbox{whenever $\beta_m+m\theta \in I_j$ for an odd}~  j.  
		\end{equation}
		\par  It follows from Equations (\ref{even inequality}) and (\ref{odd inequality}) that $|\cos(\beta_m+m\theta)|\geq \sin\epsilon$ for all $j$. Thus, if $z\in \overline{B^\epsilon_j(Q)}$, then
		\begin{equation}\label{Estimate}
			\left|\sum_{k=0}^{m-1}\frac{|b_k|\cos{(\beta_k+k\theta)}}{|b_m|\cos{(\beta_m+ m\theta)r^{m-k}}}\right|\leq  \sum_{k=0}^{m-1}\frac{|b_k||\cos{(\beta_k+k\theta)}|}{|b_m||\cos{(\beta_m+ m\theta) |r^{m-k}}}
			\leq\frac{1}{\sin\epsilon}\sum_{k=0}^{m-1}\frac{|b_k|}{|b_m|r^{m-k}}.
		\end{equation}
		The right-hand side of the Inequality (\ref{Estimate}) can be made sufficiently small by taking $r\to+\infty$. In other words, for the same $\epsilon>0$, taken earlier, there exists $r_\epsilon>0$ such that \begin{equation}\label{Estimate < epsilon}
			\left|\sum_{k=0}^{m-1}\frac{|b_k|\cos{(\beta_k+k\theta)}}{|b_m|\cos{(\beta_m+m\theta)r^{m-k}}}\right|<\epsilon ~\mbox{for all } r>r_\epsilon.
		\end{equation}
		Thus, we have the following.
		\begin{enumerate}
			\item If $z\in\overline{B^\epsilon_j(Q)}$ for an even $j$, then it follows from Equation (\ref{|e^P|}) and Inequality (\ref{even inequality}) that
			$$	|\exp(Q(z))| \geq \exp \left( |b_m|  r^m  \left(1+\sum_{k=0}^{m-1}\frac{|b_k|\cos{(\beta_k+k\theta)}}{|b_m| r^{m-k}\cos{(\beta_m+m\theta)}}\right)\sin\epsilon\right).$$
			The sum in the right-hand side is larger than $-\epsilon$ by Inequality~(\ref{Estimate < epsilon}). Thus, noting that $|z|=r$, we have 	
			\begin{equation} \label{e^P-unbounded}
				|\exp(Q(z))|>\exp \left(|b_m| |z|^m  (1-\epsilon )\sin\epsilon \right)~\mbox{for all}~ |z|> r_\epsilon.
			\end{equation} 
			
			\item If $z\in\overline{B^\epsilon_j(Q)}$ for an odd $j$  then it follows from Equation (\ref{|e^P|}) and Inequality (\ref{odd inequality})  that
			$$	|\exp(Q(z))| \leq \exp \left( |b_m| r^m  \left(1+\sum_{k=0}^{m-1}\frac{|b_k|\cos{(\beta_k+k\theta)}}{|b_m| r^{m-k}\cos{(\beta_m+m\theta)}}\right)(-\sin\epsilon) \right).$$
			The sum in the right-hand side is larger than $-\epsilon$ by Inequality~(\ref{Estimate < epsilon}). Thus, noting that $|z|=r$, we have 	
			\begin{equation}\label{e^P-bounded}
				|\exp(Q(z))|<\exp\left(-|b_m||z|^m(1-\epsilon)\sin\epsilon\right) 
				~\mbox{for all}~ |z|> r_\epsilon.\end{equation} 
		\end{enumerate}
		\begin{figure}[ht]
			\centering
			\begin{tikzpicture}[scale=1.1]
				\draw[<->] (-2.8,0) -- (2.8,0);
				\draw[<->, dotted] (-2.8,-1) -- (2.8,-1);
				\draw[<->, dotted] (-2.8,1) -- (2.8,1);
				% 		\draw[<->] (0,-1.3) -- (0,1.3);
				\draw[thick,domain=-pi/2:pi/2,samples=100]
				plot ({2*\x/pi}, {cos(\x r)});
				\fill (-1,0) circle (1.2pt);
				\fill (1,0) circle (1.2pt);
				\fill (0,0) circle (1.2pt);
				\node[below, font=\tiny] at (-1,0) {$j\pi-\frac{\pi}{2}$};
				\node[below, font=\tiny] at (1,0) {$j\pi+\frac{\pi}{2}$};
				\node[below, font=\tiny] at (0,0) {$j\pi$};
			\end{tikzpicture}
			\begin{tikzpicture}[scale=1.1]
				\draw[<->] (-2.8,0) -- (2.8,0);
				\draw[<->, dotted] (-2.8,-1) -- (2.8,-1);
				\draw[<->, dotted] (-2.8,1) -- (2.8,1);
				% 			\draw[<->] (0,-1.3) -- (0,1.3);
				\draw[thick,domain=-pi/2:pi/2,samples=100]
				plot ({2*\x/pi}, {-cos(\x r)});
				\fill (-1,0) circle (1.2pt);
				\fill (1,0) circle (1.2pt);
				\fill (0,0) circle (1.2pt);
				\node[above, font=\tiny] at (-1,0) {$j\pi-\frac{\pi}{2}$};
				\node[above, font=\tiny] at (1,0) {$j\pi+\frac{\pi}{2}$};
				\node[above, font=\tiny] at (0,0) {$j\pi$};
			\end{tikzpicture}
			\caption{\label{cosine}Graphs of cosine on
				$\left(j\pi-\frac{\pi}{2},j\pi+\frac{\pi}{2}\right)$  
				for even (left image) and odd  (right image) $j$.}
		\end{figure}

		Since $0< \epsilon < \frac{\pi}{4}$,  $(1-\epsilon)\sin \epsilon $ is positive. It now follows from  Lemma~\ref{Growth-exppoly} that  $|\exp(Q(z))| \to \infty$ or $0$ as $z \to \infty$ in $\overline{B^\epsilon_j(Q)}$ for $j$ even or odd, respectively.
	\end{proof}
	In the following lemma, for a curve $\gamma$, the set $\{\Im(Q(z)):z\in\gamma\}$ is denoted by $\Im(Q(\gamma))$, and $\mathcal{O}(z^k)$ denotes a function of $z$ such that $\lim_{z\to\infty}\frac{\mathcal{O}(z)}{z^{k}}$ is bounded.
	
	\begin{lemma}\label{Imaginary_part_fundamental_ray}
		Let $Q$ be a polynomial of degree $m$ and $\beta_m$ be the argument of the leading coefficient of $Q$. Define    $\alpha_j(Q)=-\frac{\beta_m}{m}-\frac{\pi}{2m}+\frac{\pi j}{m}$ for $j=0, 1,2, \ldots,2m-1$. If $\gamma:[0,\infty)\to\mathbb{C}$ is an unbounded curve
		containing a sequence $\{z_n\}_{n\geq 1}$  such that $\lim_{n \to \infty}z_n = \infty$ and $\lim_{n \to \infty}Arg(z_n) =\alpha_{j}(Q)$  for some $j$, then $\Im(Q(\gamma))$ contains either $(l, \infty)$ or $(-\infty, -l)$ for some $l>0$ whenever $j$ is odd or even, respectively. Further, if $\widetilde{Q}$ is a polynomial with degree less than  $m$, then the same conclusion holds for $\Im((Q -\widetilde{Q} )(\gamma))$.
	\end{lemma}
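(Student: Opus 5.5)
The plan is to show that the values $Q(z_n)$ escape to infinity along the positive or negative imaginary direction. Connectedness of $\gamma$ then fills in a whole half-line of imaginary parts.

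First, I will determine the limiting direction of $Q(z_n)$. Write $Q(z)=b_mz^m+\cdots+b_0$ with $b_m=|b_m|e^{i\beta_m}$. Then
$$Q(z_n)=z_n^m\Big(b_m+\frac{b_{m-1}}{z_n}+\cdots+\frac{b_0}{z_n^m}\Big),$$
so $|Q(z_n)|\to\infty$, and $Q(z_n)/z_n^m\to b_m$. This is exactly the computation in Lemma~\ref{Arg(P) along ray}, applied to the piecewise-linear curve through the $z_n$ as in Corollary~\ref{Cor1}; I will do it directly for sequences so that the case $m=1$ is also covered. Since $\operatorname{Arg}(z_n)\to\alpha_j(Q)$, we get, modulo $2\pi$,
$$\operatorname{Arg}(Q(z_n))\to \beta_m+m\alpha_j(Q)=\beta_m-\beta_m-\frac{\pi}{2}+j\pi=j\pi-\frac{\pi}{2}.$$
To avoid any ambiguity of the branch of $\operatorname{Arg}$, I will phrase this through $\sin$, which is $2\pi$-periodic and continuous:
$$\frac{\Im Q(z_n)}{|Q(z_n)|}=\sin\big(\operatorname{Arg} Q(z_n)\big)\to \sin\Big(j\pi-\frac{\pi}{2}\Big)=(-1)^{j+1}.$$
Hence $\Im Q(z_n)=|Q(z_n)|\cdot\frac{\Im Q(z_n)}{|Q(z_n)|}\to+\infty$ when $j$ is odd and $\to-\infty$ when $j$ is even.

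Next, I use connectedness. The function $t\mapsto \Im Q(\gamma(t))$ is continuous and real-valued on $[0,\infty)$, so its image $\Im(Q(\gamma))$ is an interval. This interval contains $\Im Q(\gamma(0))$ together with values $\Im Q(z_n)$ that tend to $+\infty$ (respectively $-\infty$). Therefore it contains $[\Im Q(\gamma(0)),\infty)$ (respectively $(-\infty,\Im Q(\gamma(0))]$). Taking $l=|\Im Q(\gamma(0))|+1>0$ gives $(l,\infty)\subset\Im(Q(\gamma))$ for odd $j$, and $(-\infty,-l)\subset \Im(Q(\gamma))$ for even $j$.

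For the final assertion, $\deg\widetilde Q<m$ implies that $Q-\widetilde Q$ has degree $m$ and the same leading coefficient $b_m$. Hence it has the same $\beta_m$ and the same fundamental rays $\alpha_j(Q)$. The argument above applies verbatim to $Q-\widetilde Q$ in place of $Q$.

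The only delicate point is the handling of arguments modulo $2\pi$ when passing to the limit. Working with $\sin(\operatorname{Arg}\cdot)$, or equivalently with the unit vectors $Q(z_n)/|Q(z_n)|\to e^{i(j\pi-\pi/2)}$, removes this issue; the rest is routine.
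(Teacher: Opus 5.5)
Your proof is correct and follows essentially the same route as the paper, which also shows $\Im Q(z_n)=(-1)^{j+1}|b_m|r_n^m(1+o(1))$ using $\sin(\beta_m+m\alpha_j(Q))=(-1)^{j+1}$, then applies continuity of $t\mapsto \Im Q(\gamma(t))$ to obtain a half-line, and handles $Q-\widetilde{Q}$ by noting that the leading coefficient is unchanged. Your use of the unit vectors $Q(z_n)/|Q(z_n)|$ and your explicit choice $l=|\Im Q(\gamma(0))|+1$ simply make the branch-of-argument issue and the choice of $l$ a bit more explicit than in the paper.
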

	
	\begin{proof}
		Let $z_n=r_ne^{i\theta_n}$ and $j\in\{0,1,2,\ldots,2m-1\}$ be such that $z_n\to\infty$ and $\theta_n\to\alpha_{j}(Q)$ as $n\to\infty$. Denoting  the leading coefficient of $Q$ by $b_m$, we have $Q(z)=b_mz^m+\mathcal{O}(z^{m-1})$ and therefore,
		$$
		\Im(Q(z_n))=\Im(b_m z^m)+\Im(\mathcal{O}(z^{m-1})) =|b_m|r_n^m\sin(\beta_m+m\theta_n)+\mathcal{O}(r_n^{m-1}).
		$$
		On the other hand, by the definition of $\alpha_{j}(Q)$,
		$\beta_m+m\alpha_{j}(Q)=-\frac{\pi}{2}+\pi j$
		and hence $\sin(\beta_m+m\alpha_{j}(Q))=(-1)^{j+1}$. Since $\theta_n\to\alpha_{j}(Q)$, we have $\sin(\beta_m+m\theta_n)\to(-1)^{j+1}$ as $n\to\infty$, and consequently for all sufficiently large $n$,
		$$\Im(Q(z_n))=(-1)^{j+1}|b_m|r_n^m(1+ o(1)), ~\mbox{where}~ o(1) ~\mbox{ is a function of }~z_n~\mbox{such that}~\lim_{n \to \infty} o(1) =0. $$
		As $r_n\to\infty$, we have
		$\Im(Q(z_n))\to+\infty~\text{if }~j\text{ is odd}, ~\text{and}~ \Im(Q(z_n))\to-\infty
		~ \text{if }j\text{ is even}$.
		
		\par Since $z_n\in\gamma$ for every $n$, there exists a sequence $\{t_n\}_{n\geq 1}$ in $[0,\infty)$ such that
		$\gamma(t_n)=z_n$. Clearly, the function $h:[0,\infty)\to\mathbb{R}$ defined by $h(t)=\Im(Q(\gamma(t)))$ is continuous. This gives that the set $\Im(Q(\gamma))$ is a connected subset of $\mathbb{R}$. Moreover, by virtue of
		$h(t_n)=\Im(Q(\gamma(t_n)))=\Im(Q(z_n))$, this set is unbounded. More specifically, $\Im(Q(\gamma))$ contains either $(l, \infty)$ or $(-\infty, -l)$ for some $l>0$ whenever $j$ is odd or even, respectively.
		\par 
		The degree and the leading coefficient of the polynomial $Q -\widetilde{Q}$ are the same as those of  $Q$. Therefore $\alpha_j(Q)= \alpha_j(Q-\widetilde{Q})$ for each $j$, and the fundamental partitions with respect to $Q$ and $Q-\widetilde{Q}$ are identical. Thus, the conclusion of the previous paragraph holds for $Q-\widetilde{Q}.$
	\end{proof}
	
	%%%%%%%%%
	\par The next lemma plays a crucial role in the proof of Theorems \ref{e^{Q_1 }+e^{Q_2 }+P_3} and \ref{Generalization}. We first introduce the necessary notions required for its statement.
	For a non-constant polynomial $Q$ with degree $m$ and an unbounded curve $\gamma: [0, \infty) \to \mathbb{C}$, the position of $\gamma$ with respect to the fundamental partition of $Q$ is a key ingredient for our proofs. For  $j=0,1,2,\ldots, 2m-1,$ from Equation (\ref{closed subsector}) recall that $	\overline{B^\epsilon_j(Q)}=\left\{z:\alpha_j(Q)+\frac{\epsilon}{m}\leq Arg(z)\leq\alpha_{j+1}(Q)-\frac{\epsilon}{m}\right\}$ for positive $\epsilon$. 
	We say the following about the position of $\gamma$.
	\begin{enumerate}
		\item \label{odd-indexed} $\gamma$ is \textbf{unbounded in an odd-indexed closed sub-sector} of $Q$ if the set $\gamma \cap \overline{B_j ^\epsilon(Q)}$ is unbounded for some positive $\epsilon$ and some odd $j$.
		\item \label{even-indexed} $\gamma$ is \textbf{unbounded in an even-indexed closed sub-sector} of $Q$ if the set $\gamma \cap \overline{B_j ^\epsilon(Q)}$ is unbounded for some  positive $\epsilon$ and some even $j$.
		\item \label{asymptotic}
		$\gamma$ is \textbf{asymptotic to a fundamental ray} of $Q$ if none of the previous possibilities (1) or (2) holds.  In this case, there is at least one  $j \in\{0,1,2,\ldots,2m-1\}$ such that, for $S_{\alpha_j (Q)}^\epsilon:=
		\{z: \alpha_j (Q) -\epsilon< Arg(z)< \alpha_j (Q) +\epsilon\}$, 
		the set $\gamma \cap S_{\alpha_j (Q)}^\epsilon $ is unbounded but the intersection of $  \gamma $ with the boundary of $  S_{\alpha_j (Q)}^\epsilon $ is bounded  for every   $ \epsilon \in (0, \frac{\pi}{2m})$. In this case, we say (for the sake of precision) $\gamma$ is asymptotic to the ray $R_{\alpha_j (Q)}$. Consequently, for every fixed $\epsilon_0 \in (0, \frac{\pi}{2m})$, exactly one of the following holds:
		\begin{enumerate}
			\item $\{\gamma(t): t>t_0\} \subset S_{\alpha_j (Q)}^{\epsilon_0}$ for some    $t_0 >0$. Let $\gamma^*$ denote the set  $\{\gamma(t): t>t_0\}$.
			In this case, $\gamma$ cannot be asymptotic to any fundamental ray corresponding to an angle other than $\alpha_j (Q)$ (see the curves $\gamma_{2}, \gamma_3$ and $\gamma_4$ in Figure~\ref{curve-position}). 
			\item  there is  $M>0$ such that the union of the two sets $ \gamma \cap S_{\alpha_j (Q)}^{\epsilon_0} \cap \{z: |z| > M\}$ and $ \{z: |z|=M\}$ is connected. If $(a)$ does not hold, we denote this connected set by $\gamma^*$  (see the curve $\gamma_5$ and $\gamma_{5}^*$ in Figure~\ref{curve-position}). 
			%Since $\gamma^* \cap \gamma$ is unbounded, for every entire function $f$, $f(\gamma)$ is unbounded whenever $f(\gamma^*)$ is unbounded.
		\end{enumerate}   
		A complete demonstration of the above situations is given in Figure~\ref{curve-position}. The curve $\gamma^*$ has the following key property whenever $\gamma$ is asymptotic to the ray $R_{\alpha_j(Q)}$:
		\begin{equation}\label{gamma-star}
			\parbox{0.9\textwidth}{
				$\gamma\cap\gamma^*$ is unbounded in such a way that if $z_n\in\gamma^*$ is a sequence with $\lim_{ n\to \infty}z_n=\infty$, then all but finitely many terms of the sequence lie on $\gamma$, and satisfy
				$\displaystyle\lim_{n\to\infty}Arg(z_n)=\alpha_j(Q)$.}
		\end{equation}			
	\end{enumerate}
	
	It is important to note that in the case \ref{asymptotic}(b), $\lim_{t \to \infty} Arg(\gamma(t))$ may not exist. There may exist an unbounded sequence $\{z_n\}_{n\geq 1}$ on $\gamma$ converging to $\infty$ such that it has two subsequences $\{z_{n_k}\}_{k\geq 1}$ and $\{z_{m_k}\}_{k\geq 1}$, also converging to $\infty$ satisfying  $\lim_{k \to \infty} Arg(z_{n_k}) = \alpha_{j }(Q)$ and $\lim_{k \to \infty} Arg(z_{m_k}) = \alpha_{j' }(Q)$  where $j \neq j'$  (see the curve $\gamma_{01}$ in Figure~\ref{curve-position}). 
	
	\begin{figure}
		\begin{center}
			\begin{tikzpicture}[scale=0.85]
				\fill[white!30]
				(0,0) -- (3,1.732) -- (0,3) -- cycle;
				\fill[white!30]
				(0,0) -- (-3,1.732) -- (-3,-1.732) -- cycle;
				\fill[white!30]
				(0,0) -- (0,-3) -- (3,-1.732) -- cycle;
				\fill[white!30]
				(0,0) -- (3,1.732) -- (3,-1.732) -- cycle;
				\fill[white!30]
				(0,0) -- (0,3) -- (-3,1.732) -- cycle;
				\fill[white!30]
				(0,0) -- (0,-3) -- (-3,-1.732) -- cycle;
				\draw[<->,thick] (0,-4)--(0,4) node[above]{};
				\draw[black, ->,thick] (0,0)--(4,2.309) node[right]{};
				\draw[black, ->,thick] (0,0)--(4,-2.309) node[right]{};
				\draw[black, ->,thick] (0,0)--(4,2.309) node[right]{};
				\draw[black, ->,thick] (0,0)--(4,2.309) node[right]{};
				\draw[black, ->,thick] (0,0)--(-4,-2.309) node[right]{};
				\draw[black, ->,thick] (0,0)--(-4,2.309) node[right]{};
				\draw[black, ->,thick] (0,0)--(0,4) node[right]{};
				\draw[black, ->,thick] (0,0)--(0,-4) node[right]{};
				\def\x{0}
				\def\y{0}
				\node[anchor = south, font=\scriptsize] at (\x+4.7, \y+2.0) {$R_{\alpha_1(Q)}$};
				\node[anchor = south, font=\scriptsize] at (\x-4.5, \y-2.9) {$R_{\alpha_4(Q)}$};
				\node[anchor = south, font=\scriptsize] at (\x+4.7, \y-2.9) {$R_{\alpha_0(Q)}$};
				\node[anchor = south, font=\scriptsize] at (\x-4.5, \y+2.3) {$R_{\alpha_3(Q)}$};
				\node[anchor = south, font=\scriptsize] at (\x+0.1, \y+4.0) {$R_{\alpha_2(Q)}$};
				\node[anchor = south, font=\scriptsize] at (\x+0.5, \y-4.6) {$R_{\alpha_5(Q)}$};
				\node[anchor = south, font=\scriptsize] at (\x+2.4, \y-0.2) {$\gamma_{01}$};
				\fill (2.2,-0.8) circle (1pt);
				\draw[
				thick,
				postaction={
					decorate,
					decoration={
						markings,
						mark=at position 0.5 with {\arrow{<}}
					}
				}
				] (2.2,0.8) to[bend right=30] (2.2,-0.8);
				\draw[thick] (2.2,0.8) -- (2.4,1.2);
				\draw[
				thick,
				postaction={
					decorate,
					decoration={
						markings,
						mark=at position 0.5 with {\arrow{>}}
					}
				}
				] (2.0,0.8) to[bend right=30] (2.0,-0.8);
				\draw[thick] (2.0,0.8) -- (2.4,1.2);
				\draw[thick] (2.0,-0.8) -- (2.4,-1.2);
				\draw[
				thick,
				postaction={
					decorate,
					decoration={
						markings,
						mark=at position 0.5 with {\arrow{<}}
					}
				}
				] (1.8,0.8) to[bend right=30] (1.8,-0.8);
				\draw[thick] (1.8,0.8) -- (2.7,1.5);
				\draw[thick] (1.8,-0.8) -- (2.7,-1.5);
				\draw[thick] (2.4,-1.2) -- (2.7,-1.5);
				\draw[dotted, thick] (1.6,0.8) -- (2.7,1.5);
				\draw[dotted,
				thick,
				postaction={
					decorate,
					decoration={
						markings,
						mark=at position 0.5 with {\arrow{>}}
					}
				}
				] (1.6,0.8) to[bend right=30] (1.6,-0.8);
				\draw[thick] (1.8,0.8) -- (2.7,1.5);
				%%%%%%%%%%%%%%%%%%%%%%%%%%%%%
				\draw[
				thick,
				samples=3000,
				domain=1:3.95,
				smooth
				]
				plot (
				{0.55*exp(-0.8*\x)*sin(deg(2.5*\x^1.7))},
				{\x}
				);
				\node[anchor = south, font=\tiny] at (\x-0.16, \y+1.1) {${\rotatebox{85}{$>$}}$};
				\fill (0.17,1) circle (1pt);
				\node[anchor = south, font=\scriptsize] at (\x+0.4, \y+1.1) {$\gamma_2$};
				%%%%%%%%%%%%%%%%%%%%%%%%%%%%%%%
				\draw[thick] (-0.8,-0.8) -- (-0.7,-1.5);
				\draw[thick] (-0.7,-1.5) -- (-0.6,-0.8);
				\draw[thick] (-0.6,-0.8) -- (-0.5,-2.5);
				\draw[thick] (-0.5,-2.5) -- (-0.40,-0.8);
				\draw[thick] (-0.40,-0.8) -- (-0.33,-3.5);
				\draw[thick] (-0.33,-3.5) -- (-0.25,-0.8);
				\draw[thick] (-0.25,-0.8) -- (-0.18,-4.5);
				\draw[dotted, thick] (-0.18,-4.5) -- (-0.10,-0.8);
				\node[anchor = south, font=\scriptsize] at (\x-1.4, \y+0.1) {$\gamma_3$};
				%%%%%%%%%%%%%%%%%%%%%%
				\draw[black, thick, ->, domain=0.6:4, samples=200]
				plot ({-\x}, {0.57735*\x - 0.5*exp(-1.2*\x)});
				\draw[black, thick, ->] (-1,0)--(-4,-1.65);
				\node[anchor = south, font=\scriptsize] at (\x-3, \y-1.1) {$\gamma_4$};
				\node[anchor = south, font=\scriptsize] at (\x-1, \y-2.1) {$\gamma_5$};
				\node[anchor = south, font=\tiny] at (\x-0.55, \y-1.9) {${\rotatebox{278}{$>$}}$};
				\node[anchor = south, font=\tiny] at (\x-0.77, \y-1.3) {${\rotatebox{278}{$>$}}$};
				%%%%%%%%%%%%%%%%%%%%
			\end{tikzpicture}
			\begin{tikzpicture}
				\draw[black, ->,thick] (8,0)--(4,-2.309) node[right]{};
				\draw[black, ->,thick] (8,0)--(8,-4) node[right]{};
				\def\x{0}
				\def\y{0}
				\node[anchor = south, font=\scriptsize] at (\x+4.5, \y-2.9) {$R_{\alpha_4(Q)}$};
				\node[anchor = south, font=\scriptsize] at (\x+8.5, \y-4.6) {$R_{\alpha_5(Q)}$};
				\draw (\x+8,\y+0) circle (1.4);
				\draw[thick] (7.255,-1.185) -- (7.3,-1.5);
				\draw[thick] (7.3,-1.5) -- (7.33805,-1.23362);
				\draw[thick] (7.42811,-1.27787) -- (7.5,-2.5);
				\draw[thick] (7.5,-2.5) -- (7.56871,-1.33191);
				\draw[thick] (7.61212,-1.34519) -- (7.66,-3.5);
				\draw[thick] (7.66,-3.5) -- (7.73087,-1.37389);
				\draw[thick] (7.76096,-1.37944) -- (7.82,-4.5);
				\draw[dotted, thick] (7.82,-4.5) -- (7.88713,-1.39544);
				\node[anchor = south, font=\scriptsize] at (\x+7, \y-2.1) {$\gamma^*_5$};
				\node[anchor = south, font=\scriptsize] at (\x+8, \y+1.5) {$|z|=M$};
			\end{tikzpicture}
			\caption{\label{curve-position}Possible positions of an unbounded curve $\gamma$ which is asymptotic to the fundamental rays of a cubic polynomial. The curve $\gamma_{01}$ returns to intersect a circle infinitely often and therefore is asymptotic to the two fundamental rays $R_{\alpha_0 (Q)}$ and $R_{\alpha_1 (Q)}$. The curves $\gamma_2$, $\gamma_3$ and $\gamma_5$ are asymptotic to the fundamental rays $R_{\alpha_2 (Q)}, R_{\alpha_3 (Q)}$ and $R_{\alpha_5 (Q)}$, respectively. Though a bit counterintuitive, the curve $\gamma_4$ is also asymptotic to  $R_{\alpha_4 (Q)}$.}
		\end{center}
	\end{figure}
	The notion of \textit{asymptotic to a fundamental ray} can be extended to unbounded subsets of curves, leading to the same consequence. This is described in the following remark, which we are going to use several times in the proofs of Theorems \ref{e^{Q_1 }+e^{Q_2 }+P_3} and \ref{Generalization}.  
	\begin{Remark}\label{asymptotic-nont-connected}
		Let $\gamma'$ be an unbounded subset of an unbounded curve. We say $\gamma'$ is asymptotic to a fundamental ray of a polynomial $Q$ with degree $m$  if there is at least one  $j \in\{0,1,2,\ldots,2m-1\}$ such that, for $S_{\alpha_j (Q)}^\epsilon:=
		\{z: \alpha_j (Q) -\epsilon< Arg(z)< \alpha_j (Q) +\epsilon\}$, 
		the set $\gamma' \cap S_{\alpha_j (Q)}^\epsilon $ is unbounded but the intersection of $  \gamma' $ with the boundary of $  S_{\alpha_j (Q)}^\epsilon $ is bounded  for every   $ \epsilon \in (0, \frac{\pi}{2m})$. As discussed in \ref{asymptotic} previously, it is possible to construct an unbounded connected set $\gamma^*$ using $\gamma'$ that satisfies Property (\ref{gamma-star}). 
	\end{Remark}
	For stating the next lemma, let $Q_1, Q_2$ be two polynomials with degrees $m_1, m_2$, with $m_1 > m_2$ and with leading coefficients $b_{m_1}, b_{m_2}$ respectively. Denote $b_{m_1}=|b_{m_1}|e^{i\beta_{1}}$, $b_{m_2}=|b_{m_2}|e^{i\beta_{2}}$,   $\alpha_j(Q_1)=-\frac{\beta_1}{{m_1}}-\frac{\pi}{2{m_1}}+\frac{\pi j}{{m_1}}$ for $j\in\{0, 1, 2, \ldots,2{m_1}-1\}$, $\alpha_{j'}(Q_2)=-\frac{\beta_2}{{m_2}}-\frac{\pi}{2{m_2}}+\frac{\pi j'}{{m_2}}$ for $j'\in\{0, 1, 2, \ldots,2{m_2}-1\}$, and 
	$\overline{B^\epsilon_{j'}(Q_2)}=\left\{z:\alpha_{j'}(Q_2)+\frac{\epsilon}{{m_2}}\leq Arg(z)\leq\alpha_{{j'}+1}(Q_2)-\frac{\epsilon}{{m_2}}\right\}$ for some positive $\epsilon$.
	\begin{lemma} \label{asymptotic-to-ray}
		Let $Q_1 $ be a polynomial with degree $m_1$ and   $\gamma:[0,\infty)\to\mathbb{C}$ be an unbounded curve asymptotic to a fundamental ray of $Q_1$. Let that ray correspond to $\alpha_{j^*}(Q_1)$ for some $j^*\in\{0,1,2,\ldots,2{m_1}-1\}$. If $Q_2$  is a polynomial with degree less than  $m_1$, then for any non-zero rational map $R$, there is a sequence $\{u_n\}_{n\geq 1}$ on $\gamma$ satisfying $\lim_{n \to \infty}u_n = \infty$ and $\lim_{n \to \infty}Arg(u_n) =\alpha_{j^*}(Q_1)$  such that $-1$ is not a limit point of the sequence $\{R(u_n)\exp({Q_1(u_n)-Q_2(u_n)})\}_{n\geq 1}$.
	\end{lemma}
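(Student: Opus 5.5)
Set $Q=Q_1-Q_2$. Since $\deg Q_2<m_1$, $Q$ has degree $m_1$ and the same leading coefficient as $Q_1$. Hence the fundamental rays of $Q$ and $Q_1$ coincide, and $\gamma$ is asymptotic to $R_{\alpha_{j^*}(Q)}$. The idea is to exploit the imaginary part of $Q$ along $\gamma^*$, which by Lemma~\ref{Imaginary_part_fundamental_ray} sweeps out a half-line. The argument of $R(z)\exp(Q(z))$ then winds without bound, so we can choose points where this argument stays away from $\pi$ (mod $2\pi$). At such points the values cannot approach $-1$.

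\emph{Step 1: reduce to $\gamma^*$.} Pass from $\gamma$ to the unbounded connected set $\gamma^*$ constructed in case~\ref{asymptotic} (or via Remark~\ref{asymptotic-nont-connected}). By Property~(\ref{gamma-star}), every sequence on $\gamma^*$ tending to $\infty$ eventually lies on $\gamma$ and has argument tending to $\alpha_{j^*}(Q_1)$. It therefore suffices to produce the sequence $u_n$ on $\gamma^*$ with $u_n\to\infty$.

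\emph{Step 2: control of $\Im Q$.} By the proof of Lemma~\ref{Imaginary_part_fundamental_ray} applied to $Q$, we have $\Im Q(z)=(-1)^{j^*+1}|b_{m_1}||z|^{m_1}(1+o(1))$ for $z\in\gamma^*$, $z\to\infty$. Hence $|\Im Q(z)|\to\infty$ uniformly as $|z|\to\infty$ on $\gamma^*$. Moreover, $\Im Q(\gamma^*)$ contains a half-line $(l,\infty)$ or $(-\infty,-l)$; in particular it contains every value $2\pi k$ for $|k|$ large, with the appropriate sign.

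\emph{Step 3: control of $\arg R$.} Write $R(z)=cz^{p}(1+o(1))$ with $c\neq0$ and $p\in\mathbb{Z}$. By Lemma~\ref{Arg(P) along ray}, applied to numerator and denominator along sequences as in Corollary~\ref{Cor1}, $\operatorname{Arg}R(z)\to p\,\alpha_{j^*}(Q_1)+\operatorname{Arg}c=:\phi$ along $\gamma^*$. Note that $\operatorname{Arg}(R e^{Q})\equiv \operatorname{Arg}R+\Im Q \pmod{2\pi}$.

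\emph{Step 4: choose the points.} For each large $n$, use connectedness (the intermediate value argument from Lemma~\ref{Imaginary_part_fundamental_ray}) to pick $u_n\in\gamma^*$ with $\Im Q(u_n)=\pm 2\pi k_n-\phi$, where $|k_n|\to\infty$. We must also ensure $u_n\to\infty$. This holds because on the bounded part $\gamma^*\cap\{|z|\le M\}$ the quantity $\Im Q$ is bounded, so once $|\Im Q(u_n)|$ exceeds that bound, $|u_n|>M$. Then $\operatorname{Arg}\bigl(R(u_n)e^{Q(u_n)}\bigr)\to 0 \pmod{2\pi}$. Consequently every limit point $w$ of the sequence, finite or infinite, satisfies either $w=0$ or $\operatorname{Arg} w=0$, so $w\neq-1$.

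The main obstacle is Step 4: we must guarantee that the chosen points really escape to infinity and stay on $\gamma$. This is exactly why $\gamma^*$ and Property~(\ref{gamma-star}) are needed, rather than $\gamma$ itself, which may return infinitely often to bounded regions, as the curve $\gamma_{01}$ in Figure~\ref{curve-position} does. A further technical point is making the argument of $R$ converge along a merely sequential approach to the ray. This is handled by Corollary~\ref{Cor1}, whose proof works equally for degree one and for the denominator of $R$.
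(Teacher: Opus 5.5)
Your proposal is correct and follows essentially the same route as the paper. You pass to a connected unbounded piece of $\gamma$ near the ray (the paper's $\widetilde{\gamma}$, your $\gamma^*$), apply Lemma~\ref{Imaginary_part_fundamental_ray} to $Q_1-Q_2$ to hit prescribed values of $\Im(Q_1-Q_2)$, and combine this with the limiting argument of $R$ so that $\operatorname{Arg}(R e^{Q_1-Q_2})$ stays away from $\pi$; the only differences are that you steer the argument to $0$ rather than $\pi/2$, and that you handle $\operatorname{Arg}R$ directly via $R(z)=cz^p(1+o(1))$, which sidesteps the degree restriction in Corollary~\ref{Cor1}.
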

	\begin{proof}
		Since $\gamma$ is asymptotic to a fundamental ray of $Q_1$ corresponding to the angle $\alpha_{j^*}(Q_1)$, for a sufficiently small $\delta>0$, possibly with $0<\delta<\frac{\pi}{8}$
		, there exists $M>0$   such that  $$\widetilde{\gamma}:=\{z\in\gamma:|z|>M ~\mbox{and}~\alpha_{j^*}(Q_1)-\delta < Arg(z) < \alpha_{j^*}(Q_1)+\delta \}$$ is an unbounded (but not necessarily connected) subset of $\gamma$. If $\widetilde{\gamma}$ is not connected, we consider its union with the circle $\{z: |z|=M\}$ and denote it by the same  $\widetilde{\gamma}$ for the sake of notational simplicity. Note that, for every sequence $z_n\in\widetilde{\gamma}$, such that $\lim_{n \to \infty}z_n = \infty$, we must have $\lim_{n \to \infty}Arg(z_n) =\alpha_{j^*}(Q_1)$. For such a sequence, applying the last part of Lemma \ref{Imaginary_part_fundamental_ray} to $\widetilde{\gamma}$, we get an unbounded interval contained in
		$\Im((Q_1-Q_2)(\widetilde{\gamma}))$. Without loss of generality, assume that this interval is contained in the positive real axis.

		\par If $R=\frac{P_1}{P_2}$ where $P_1$ and $P_2$ are polynomials with the arguments of their leading coefficients as $p_1$ and $p_2$ respectively, then define $$\eta=\deg(P_1)\alpha_{j^*}(Q_1)+p_1-\deg(P_2)\alpha_{j^*}(Q_1)-p_2~\mbox{and take }~  \theta^*=\frac{\pi}{2}-\eta.$$ 
		
		Choose a sequence  $\{w_n\}_{n\geq 1}$ on $(Q_1-Q_2)(\widetilde{\gamma})$ such that $\Im(w_n)=\theta^* +2(n_0 +n)\pi$ for some $n_0$. This is possible as $\Im((Q_1-Q_2)(\widetilde{\gamma}))$ contains an unbounded interval in the positive real axis. Then, there exist $u_n \in \widetilde{\gamma}$ such that
		$(Q_1-Q_2)(u_n)=w_n$. Since $\lim_{n \to \infty}w_n =\infty$ and $Q_1 -Q_2$ is a polynomial, we have $\lim_{n \to \infty}u_n = \infty$. Further,  $\lim_{n \to \infty}Arg(u_n) =\alpha_{j^*}(Q_1)$ as   $\gamma \bigcap \overline{B^\epsilon_j(Q_1)}$ is bounded for each $j$ and for each $\epsilon>0$.
		
		Now    $\lim_{n \to \infty} Arg (R(u_n)) = \lim_{n \to \infty}\left(Arg(P_1(u_n))-Arg(P_2(u_n))\right) =\eta$, by Corollary \ref{Cor1}.
		For the earlier chosen $\delta$ and for all sufficiently large $n$, we have
		\begin{equation}\label{Arg(R(z) )}
			Arg(R(u_n))=Arg(P_1(u_n))-Arg(P_2(u_n))\in(\eta-\delta, \eta+\delta).
		\end{equation}
		Since $Arg(\exp({Q_1(u_n)-Q_2(u_n)}))=\Im(Q_1(u_n)-Q_2(u_n))=\Im(w_n)=\theta^* +2(n_0 +n)\pi$, we have
		$$Arg\left(R(u_n)\exp({Q_1(u_n)-Q_2(u_n)})\right)=Arg(R(u_n))+\theta^*.$$ 
		It is clear from (\ref{Arg(R(z) )}) that $Arg(R(u_n))+\theta^* \in (\frac{\pi}{2} -\delta, \frac{\pi}{2} +\delta).$  However, by the choice of $\delta$, this interval is contained in $(\frac{\pi}{2} -\frac{\pi}{8}, \frac{\pi}{2}+\frac{\pi}{8})$. In particular, 
		$Arg\left(R(u_n)\exp({Q_1(u_n)-Q_2(u_n)})\right)$  cannot be $\pi$ and consequently,$-1$ is not a limit point of  $\{R(u_n)\exp({Q_1(u_n)-Q_2(u_n)})\}_{n\geq 1}$.
	\end{proof}	
	\section{Proofs}\label{Proofs}
	In this section, we present proofs of Theorems \ref{P_1e^{Q_1}+P_2}, \ref{e^{Q_1 }+e^{Q_2 }+P_3} and \ref{Generalization}, and use the following notations: 
	$P_i(z)=\sum_{k=0}^{d_i}a_{i,k}z^k$ and $Q_i(z)=\sum_{k=0}^{m_i}b_{i,k}z^k$ where $a_{i,d_i}, b_{i,m_i}\neq0$.
	
	We first recall a well-known inequality for polynomials, followed by a simple observation concerning exponential polynomials with real coefficients, which follows directly from elementary calculus.

	\begin{lemma}
		If $P$ is a non-constant polynomial then there exists $M_1, M_2, R>0$ such that  $M_1 |z|^{\deg(P)} < |P(z)|< M_2 |z|^{\deg(P)}$ for all $|z|>R$.
		\label{growth-poly} 
	\end{lemma}
	\begin{lemma}\label{exp-dominating}
		For a natural number  $l$, let $G_l(x)=\sum_{k=1} ^{l} A_{k} x^{m_k} \exp(\widetilde{A_{k}} x^{n_k})$ where  $ A_k, \widetilde{A_{k}} \in \mathbb{R}$ and $m_k, n_k$ are natural numbers for each $k$. If $A_1, \widetilde{A_1}>0$ and $n_1 >n_k$ for $k=2,3,4,\ldots,l$ then $\lim_{x \to +\infty} G_l (x)=+\infty$. 
	\end{lemma}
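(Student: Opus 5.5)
We need to prove Lemma~\ref{exp-dominating}: $G_l(x)\to+\infty$ as $x\to+\infty$ when the first term has positive coefficients and strictly largest exponent $n_1$ in the exponential.

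The plan is to factor out the dominant term. For $x>0$ write
\[
G_l(x)=A_1x^{m_1}\exp(\widetilde{A_1}x^{n_1})\left(1+\sum_{k=2}^{l}\frac{A_k}{A_1}x^{m_k-m_1}\exp\bigl(\widetilde{A_k}x^{n_k}-\widetilde{A_1}x^{n_1}\bigr)\right).
\]
The prefactor $A_1x^{m_1}\exp(\widetilde{A_1}x^{n_1})$ tends to $+\infty$, because $A_1>0$, $\widetilde{A_1}>0$ and $m_1,n_1\geq 1$. So it is enough to show that each summand in the bracket tends to $0$. Then the bracket tends to $1$, and the product diverges to $+\infty$.

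For a fixed $k\geq2$, the exponent is
\[
\widetilde{A_k}x^{n_k}-\widetilde{A_1}x^{n_1}=-x^{n_1}\bigl(\widetilde{A_1}-\widetilde{A_k}x^{n_k-n_1}\bigr).
\]
Since $n_k<n_1$, the term $x^{n_k-n_1}$ tends to $0$. Hence for large $x$ the bracket is at least $\widetilde{A_1}/2$, which holds whatever the sign of $\widetilde{A_k}$. This gives
\[
|x^{m_k-m_1}\exp(\widetilde{A_k}x^{n_k}-\widetilde{A_1}x^{n_1})|\leq x^{|m_k-m_1|}\exp\bigl(-\tfrac{\widetilde{A_1}}{2}x^{n_1}\bigr).
\]
The right-hand side tends to $0$, since $\exp(-cx^{n_1})$ with $c>0$ and $n_1\geq1$ decays faster than any power of $x$. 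One can check this, for instance, by $e^{y}\geq y^N/N!$ with $y=cx^{n_1}$ and $N$ large, or by repeated L'H\^opital. Multiplying by the constant $A_k/A_1$ keeps the limit $0$, and there are only finitely many terms.

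There is no real obstacle here. The only point needing care is the sign of $\widetilde{A_k}$, which may be negative or zero, and of $A_k$. The estimate above handles both uniformly. Note also that the hypothesis $n_1>n_k$ is used exactly to make $\widetilde{A_1}x^{n_1}$ dominate $\widetilde{A_k}x^{n_k}$. Equal exponents would require $\widetilde{A_1}>\widetilde{A_k}$ instead.
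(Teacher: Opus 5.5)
Your proof is correct and uses essentially the same dominant-term argument as the paper. The paper bounds $G_l(x)$ below by $A_1x^{m_1}\exp(\widetilde{A_1}x^{n_1})-\sum_{k=2}^{l}|A_k|x^{m_k}\exp(Cx^{n_k})$ with $C=\max_{2\le k\le l}|\widetilde{A_k}|$, and then simply asserts that the conclusion follows from $n_1>n_k$; your factoring out of the leading term, showing that each ratio tends to $0$, supplies exactly the step the paper leaves implicit.
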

	\begin{proof}
		Since $A_1,\widetilde A_1>0$ and $n_1>n_k$ for $k\ge2$, putting $C=\max_{2\le k\le l}|\widetilde A_k|$, we have
		$$
		G_l(x)\geq A_1 x^{m_1}\exp({\widetilde A_1x^{n_1}})-\sum_{k=2}^l|A_k|x^{m_k}\exp({Cx^{n_k}}).
		$$ Since $n_1>n_k$ for each $k=2,3,4,\ldots,l$, the lemma follows.
	\end{proof}
	
	%		\begin{proof}
		%		Let $P(z)=a_{0}+a_{1}z +\cdot\cdot\cdot+a_{d}{z }^d$ be a polynomial of degree $d$, where $a_d\neq0$. Then
		%		$$
		%		\lim_{z\to \infty}\frac{P(z)}{z^d}=\lim_{z\to \infty}\left(a_d+\frac{a_{d-1}}{z}+\cdots+\frac{a_0}{z^d}\right)=a_d$$	Hence, there exists $R>0$ such that, for $|z|>R$,
		%		$\frac{|a_d|}{2}<\left|\frac{P(z)}{z^d}\right|<\frac{3|a_d|}{2}$.
		%		Therefore, by taking $M_1=\frac{|a_d|}{2}$ and $M_2=\frac{3|a_d|}{2}$, for all $|z|>R$ we obtain
		%		$M_1|z|^{\deg(P)}<|P(z)|<M_2|z|^{\deg(P)}$.	
		%	\end{proof}
We are now ready to prove Theorems~\ref{P_1e^{Q_1}+P_2} and \ref{e^{Q_1 }+e^{Q_2 }+P_3}.  	\begin{proof}[\bf{Proof of Theorem \ref{P_1e^{Q_1}+P_2}}]
		Let $F_2(z)=P_1(z)\exp(Q_1(z))+P_2(z)$ and $d_i$ be the degree of $P_i$ for $i=1,2$ and $\gamma$ be an unbounded curve. In view of Lemma~\ref{Bov iff}, it is enough to find  an unbounded sequence $\{z_n\}_{n\geq 1}$ on $\gamma$ such that $\{F_2(z_n)\}_{n\geq 1}$ is unbounded. Depending on the position of $\gamma$ with respect to the fundamental partition of the plane with respect to $Q_1$ (see (\ref{A_j(P)})), there are two situations. To describe these, recall that $\overline{B^\epsilon_j(Q_1)}=\left\{z:\alpha_j(Q_1)+\frac{\epsilon}{m_1}\leq Arg(z)\leq\alpha_{j+1}(Q_1)-\frac{\epsilon}{m_1}\right\}$ for $\epsilon>0$ and  $j\in \{0,1,2,\ldots,2m_1-1\}$.
		
		\begin{enumerate}
			\item If the set $\gamma \bigcap \overline{B^\epsilon_j(Q_1)}$ is unbounded for some $\epsilon>0$ and for some $j$, then $\gamma \bigcap \overline{B^\epsilon_j(Q_1)}$ is also unbounded for a smaller value of $\epsilon$. In view of this, assume without loss of generality that $0< \epsilon < \frac{\pi}{4}$, and consider a sequence $\{z_n\}_{n\geq 1}$ in $\gamma \bigcap \overline{B^\epsilon_j(Q_1)}$ such that $\lim_{ n\to \infty}z_n=\infty$.  We have the following two cases depending on the parity of $j$. 
			
			{\bf{Case 1.}} (\textit{Unbounded in an odd-indexed closed sub-sector of $Q_1$}) If $j$ is odd  then  applying Lemma~\ref{Growth-exppoly}(2) to $Q_1$, we have $|\exp(Q_1(z_n))|< \exp\left(-|b_{1, m_1}||z_n|^{m_1}(1+\epsilon)\sin\epsilon \right)$ for all sufficiently large values of $n$. The quantity $B:=|b_{1, m_1}|(1+\epsilon)\sin \epsilon $  is positive by the choice of $\epsilon$. This, along with the triangle inequality, gives that  
			$$|F_2(z_n)| \geq|P_2(z_n)|-|P_1(z_n)\exp(Q_1(z_n))|> |P_2 (z_n)|- |P_1 (z_n)|\exp(-B |z_n|^{m_1}).$$
			It follows from  Lemma~\ref{growth-poly} that the second term in the right-hand side of the above inequality goes to $0$ as $n \to \infty$.	Since $|P_2(z_n)|\to\infty$ as $n\to\infty$,  $|F_2(z_n)|\to\infty$ as $n\to \infty$  and  $\{F_2(z_n)\}_{n\geq 1}$ becomes unbounded.
			
			{\bf{Case 2.}}  (\textit{Unbounded in an even-indexed closed sub-sector of $Q_1$}) If $j$ is even  then applying Lemma~\ref{Growth-exppoly}(1) to $Q_1$, we have $|\exp(Q_1(z_n))|>\exp\left(|b_{1, m_1}||z_n|^{m_1}(1-\epsilon)\sin\epsilon \right)$ for all sufficiently large values of $n$. The quantity $A:=|b_{1, m_1}|(1-\epsilon)\sin \epsilon $  is positive by the choice of $\epsilon$. This, along with the triangle inequality, gives that 
			$$|F_2(z_n)|\geq |P_1(z_n)\exp(Q_1(z_n))|-|P_2(z_n)|>|P_1(z_n)|\exp(A |z_n|^{m_1})-|P_2(z_n)|.$$
			For all sufficiently large $n$, the right-hand side of the above inequality is larger than  $M_1|z_n|^{d_1} \exp(A{|z_n|^{m_1}})-M_2|z_n|^{d_2}$ for some $M_1, M_2 >0$ (see Lemma~\ref{growth-poly}). Clearly, this gives that $|F_2(z_n)|\to\infty$ as $n\to \infty$  (by Lemma~\ref{exp-dominating}) and consequently, $\{F_2(z_n)\}_{n\geq 1}$ is unbounded.

			\item  (\textit{Asymptotic to a fundamental ray of $Q_1$})
			Let $\gamma  $ be asymptotic to a ray $R_{\alpha_j (Q_1)} $. Then there is an unbounded curve $\gamma^*$ satisfying Property (\ref{gamma-star}). Since there is a sequence $\{u_n\}_{n\geq 1}$ on $\gamma^*$ such that $\lim_{n\to\infty}u_n=\infty$ and $\lim_{n\to\infty}Arg(u_n)=\alpha_{j}(Q_1)$, it follows from Lemma \ref{Imaginary_part_fundamental_ray} that $\Im{(Q_1(\gamma^*))}$ contains an unbounded interval and without loss of generality assume that this interval is $(l,+\infty)$ for some $l>0$.
			By virtue of this, we can choose a sequence  $\{w_n\}_{n\geq 1}$ on $Q_1(\gamma)$ such that 
			\begin{equation}\label{W_n=W_0+2npi}
				\Im(w_n)=\widetilde{\beta_2}-\widetilde{\beta_1}+(d_2-d_1)\alpha_{j}(Q_1)+2 (n_0 +n)\pi, ~\mbox{for some}~n_0\in\mathbb{N}
			\end{equation} where $\widetilde{\beta_1}$ and $\widetilde{\beta_2}$ are the arguments of the leading coefficients of $P_1$ and $P_2$ respectively. Thus, each point  $\exp(w_n)$ lies on the ray $R_{\overline{\theta}}$ where $$\overline{\theta} = \beta_2-\beta_1+(d_2-d_1)\alpha_{j}(Q_1).$$ Now, consider a sequence of points $\{z_n\}_{n\geq 1}$ on $\gamma^*$ such that $Q_1(z_n)=w_n$. Clearly,  $\lim_{n\to\infty}z_n=\infty$. Furthermore, by Property~(\ref{gamma-star}) we have $\lim_{n\to\infty}Arg(z_n)=\alpha_{j}(Q_1)$.
			
			\par Next, we show that $\{F_2(z_n)\}_{n\geq 1}$ is unbounded. Applying Corollary \ref{Cor1} to $P_1$ and $P_2$,  and taking $\theta =\alpha_{j}(Q_1)$, for any chosen $0<\delta<\frac{\pi}{8}$, we get $N\in\mathbb{N}$ such that for all $n\geq N$,
			\begin{equation}\label{Arg(P1)}
				Arg(P_1(z_n))\in\left(\widetilde{\beta_1}+d_1\alpha_{j}(Q_1)-\delta,\widetilde{\beta_1}+d_1\alpha_{j}(Q_1)+\delta\right)
			\end{equation} and
			\begin{equation}\label{Arg(P2)}
				Arg(P_2(z_n))\in\left(\widetilde{\beta_2}+d_2\alpha_{j}(Q_1)-\delta,\widetilde{\beta_2}+d_2\alpha_{j}(Q_1)+\delta\right).
			\end{equation}
			Letting  $S_{P_1}:=\{z:\widetilde{\beta_1}+d_1\alpha_{j}(Q_1)-\delta<Arg(z)<\widetilde{\beta_1}+d_1\alpha_{j}(Q_1)+\delta\}$  and  $S_{P_2}:=\{z:\widetilde{\beta_2}+d_2\alpha_{j}(Q_1)-\delta<Arg(z)<\widetilde{\beta_2}+d_2\alpha_{j}(Q_1)+\delta\} $, observe that $$P_i(z_n) \in S_{P_i}~\mbox{for}~ i =1,2 ~\mbox{and for all}~ n\geq N.$$

			\par The fact  $Arg(P_1(z_n)\exp({Q_1{(z_n)}}))=Arg(P_1(z_n))+\overline{\theta} $  along with the   (\ref{Arg(P1)}) gives that  $$Arg(P_1(z_n))+\overline{\theta} \in \left(\widetilde{\beta_1}+d_1\alpha_{j}(Q_1)-\delta+\overline{\theta} ,\widetilde{\beta_1}+d_1\alpha_{j}(Q_1)+\delta+ \overline{\theta} \right).$$ A simple calculation yields $\widetilde{\beta_1}+d_1\alpha_{j}(Q_1)-\delta+\overline{\theta}=\widetilde{\beta_2}+d_2\alpha_{j}(Q_1)-\delta$ and $\widetilde{\beta_1}+d_1\alpha_{j}(Q_1)+\delta+\overline{\theta}=\widetilde{\beta_2}+d_2\alpha_{j}(Q_1)+\delta$. Thus, it follows from   (\ref{Arg(P2)})  that for all $n\geq N$, $P_1(z_n)\exp({Q_1{(z_n)}})\in S_{P_2}$. The opening of $S_{P_2}$ is $2\delta$, that is less than $\frac{\pi}{4}$ by our choice of $\delta$. 
			Since the sequence $\{-P_2(z_n)\}_{n\geq 1}$ is unbounded and is contained in $-S_{P_2}:=\{-z:z\in S_{P_2}\}$ and the opening of the sector $S_{P_2}$ (therefore of $-S_{P_2}$) is less than $\frac{\pi}{4}$, the distance $|P_1(z_n) \exp({Q_1{(z_n)}})-(-P_2(z_n))|$ goes to $\infty$  as $n \to \infty$. In other words, 
			$$\lim_{n \to \infty}|F_2(z_n)|=\infty.$$   Note that the boundedness of $P_1 (z_n) \exp(Q_1 (z_n))$ does not matter.  Hence, $\{F_2(z_n)\}_{n \geq 1}$ is unbounded.	\end{enumerate}
		This completes the proof.
	\end{proof}

	\begin{proof}[\bf{Proof of Theorem \ref{e^{Q_1 }+e^{Q_2 }+P_3}}]
		Let $F_3(z)=P_1(z)\exp(Q_1(z))+P_2(z)\exp(Q _2(z))+P_3(z)$. Since $\deg(Q_1)\neq \deg(Q_2)$, without loss of generality assume that $\deg(Q_1)>\deg(Q_2)$, and consider the fundamental partition with respect to $Q_1$. Let $\gamma$ be an unbounded curve.  In view of the Lemma~\ref{Bov iff}, it is enough to find  an unbounded sequence $\{z_n\}_{n\geq 1}$ on $\gamma$ such that $\{F_3(z_n)\}_{n\geq 1}$ is unbounded. To do it, we consider three situations depending on the position of $\gamma$ with respect to the aforesaid partition. 
		\begin{enumerate}
			\item (\textit{Unbounded in an odd-indexed closed sub-sector of $Q_1$}) If  $ \gamma $ is unbounded in an odd-indexed closed sub-sector of $Q_1$, then there is an odd $j\in\{0,1,2,\ldots,2m_1-1\}$ and a positive $\epsilon$ such that $L :=\gamma \bigcap \overline{B^\epsilon_j(Q_1)}$ is unbounded. For every  sequence $\{u_n\}_{n\geq 1}$ on $L$ such that $\lim_{ n\to \infty}u_n=\infty$, we have $\lim_{n \to \infty} P_1 (u_n) \exp(Q_1 (u_n)) =0$ by  Lemma~\ref{Growth-exppoly}(2). We now consider the fundamental partition with respect to $Q_2$ and the position of $L$ in it.
			\par Let $L' := \overline{B^{\epsilon'}_{j'}(Q_2)} \cap L$ be unbounded  for some $j'\in\{0,1,2,\ldots,2m_2-1\}$ and positive $\epsilon'$. Consider $z_n \in L'$ with $\lim_{n \to \infty} z_n =\infty$.
			\par If $j'$ is odd, then  $\lim_{n \to \infty}P_2(z_n)\exp({Q_2(z_n)})= 0$ by  Lemma~\ref{Growth-exppoly}(2). It follows from $\lim_{n \to \infty} P_3(z_n) =\infty$ and the inequality  $$|F_3(z_n)|\geq |P_3(z_n)|-|P_1(z_n)\exp({Q_1(z_n)})|-|P_2(z_n)\exp({Q_2(z_n)})|,$$ that   $|F_3(z_n)|\to\infty$ as $n\to \infty$.  
			
			\par If $j'$ is even, then we use the inequality  $$|F_3(z_n)|\geq |P_2(z_n)\exp({Q_2(z_n)})|-|P_1(z_n)\exp({Q_1(z_n)})|-|P_3(z_n)|.$$ 
			
			Applying  Lemma~\ref{Growth-exppoly}(1) to $Q_2$ and  Lemma~\ref{growth-poly} to $ P_2 $  and $ P_3$, we find constants $C_2,M_2~\mbox{and}~M_3>0$ such that $|P_2(z_n)\exp({Q_2(z_n)})|\geq M_2|z_n|^{d_2}\exp({C_2{|z_n|^{m_2}}})$ and $|P_3(z_n)|\leq M_3|z_n|^{d_3}$. Putting these estimates in the previous inequality, we get $$|F_3(z_n)|\geq M_2|z_n|^{d_2}\exp({C_2{|z_n|^{m_2}}})-|P_1(z_n)\exp({Q_1(z_n)})|-M_3|z_n|^{d_3}.$$ 
			As $\lim_{n \to \infty} P_1 (z_n) \exp(Q_1 (z_n)) =0$, clearly  $|F_3(z_n)|\to\infty$ as $n\to \infty$, by Lemma~\ref{exp-dominating}.  
			
			\par
			 The remaining case is that  $L'$ is bounded  for every $j'\in\{0,1,2,\ldots,2m_2-1\}$ and every $\epsilon'>0$. Then, there is a $j^* \in \{0,1,2,\ldots, 2m_2 -1\}$ such that $L$ (as an unbounded subset of $\gamma$) is asymptotic to the fundamental ray  $R_{\alpha_{j^*} (Q_2)}$  (see  Remark~\ref{asymptotic-nont-connected}). Thus, we can construct an unbounded connected curve $\gamma^*$ using $\gamma$ that satisfies Property~(\ref{gamma-star}). More precisely,  $\gamma^*$ is as follows where $\epsilon^* \in (0, \frac{\pi}{2m_2})$:   if there is a $t_0 >0$ such that  $\alpha_{j^*} (Q_2) -\epsilon^* < Arg (\gamma(t)) < \alpha_{j^*} (Q_2) +\epsilon^* $ for all $t >t_0$ then $\gamma^* = \{\gamma(t): t>t_0\}$; otherwise there is $M>0$ such that the union of the two sets $  \left(\gamma  \cap  \{z: \alpha_{j^*} (Q_2) -\epsilon^*< Arg(z)< \alpha_{j^*} (Q_2) +\epsilon^*\}  \cap \{z: |z| > M\} \right)$ and $\{z: |z|=M\}$ is connected, which we take as $\gamma^*$.

			\par By Remark~\ref{unboundedpart-unboundedcurve} and Theorem~\ref{P_1e^{Q_1}+P_2}, the image of  $\gamma^*$  under $P_2 \exp(Q_2)+P_3$ is unbounded. In other words, there is a sequence  $z_n$ on  $\gamma^*$ such that   $\lim_{ n\to \infty} z_n  = \infty $ and $\lim_{n \to \infty} \left(P_2(z_n)\exp({Q_2(z_n)})  +  P_3(z_n)\right) =\infty$. Clearly $z_n \in L$ for all but possibly finitely many values of $n$.
			\par 
			Now we use the inequality 
			$$|F_3(z_n)|\geq  |P_2(z_n)\exp({Q_2(z_n)})  +  P_3(z_n)|-|P_1(z_n)\exp({Q_1(z_n)})|  $$
			to conclude that  $|F_3(z_n)|\to\infty$ as $n\to \infty$ as $\lim_{n \to \infty} P_1 (z_n) \exp(Q_1 (z_n)) =0$. 
			
			\item (\textit{Unbounded in an even-indexed closed sub-sector of $Q_1$})  If  $ \gamma $ is unbounded in an even-indexed closed sub-sector of $Q_1$,  then there is an even $j\in\{0,1,2,\ldots,2m_1-1\}$ and a positive $\epsilon$  such that $ \gamma \bigcap \overline{B^\epsilon_j(Q_1)}$ is unbounded. Take a sequence $\{z_n\}_{n\geq 1}$ on $\gamma\bigcap \overline{B^\epsilon_j(Q_1)}$  satisfying $\lim_{n\to \infty}z_n=\infty$. Applying  Lemma~\ref{Growth-exppoly}(1) to $Q_1$ and   Lemma~\ref{growth-poly} to $P_1$, we find constants $M_1,C_1>0$ such that $|P_1(z_n)\exp({Q_1(z_n)})|\geq M_1|z_n|^{d_1}\exp({C_1{|z_n|^{m_1}}})$. Applying Lemma~\ref{growth-poly} to $P_2, Q_2$ and $P_3$, we get $M_2,C_2$ and $M_3>0$ such that $|P_2(z_n)\exp({Q_2(z_n)})| \leq  M_2|z_n|^{d_2}\exp({C_2{|z_n|^{m_2}}})$ and $|P_3(z_n)| \leq M_3 |z_n|^{d_3}$ respectively. Therefore,  \begin{align*}
				|F_3(z_n)|&\geq |P_1(z_n)\exp({Q_1(z_n)})|-|P_2(z_n)\exp({Q_2(z_n)})|-|P_3(z_n)|\\&\geq  
				M_1|z_n|^{d_1}\exp({C_1{|z_n|^{m_1}}})-M_2|z_n|^{d_2}\exp({C_2{|z_n|^{m_2}}})-M_3|z_n|^{d_3}.
			\end{align*} 
			Since $m_1>m_2$, we have $|F_3(z_n)|\to\infty$ as $n\to \infty$, by Lemma~\ref{exp-dominating}.  
			
			\item (\textit{Asymptotic to a fundamental ray of $Q_1$}) If   $\gamma  $ is asymptotic to a fundamental ray of $Q_1$, then there is a $j \in \{0, 1, 2, \ldots, 2m_1 -1\}$ and a connected set $\gamma^*$ such that  
			every sequence on $\gamma^*$ going to $\infty$ has all but possibly finitely many of its terms lying on the original curve  $\gamma$ (see Property~(\ref{gamma-star})).
			
			\par Since the function $P_1 \exp({Q_1 })+P_3 $ has bov by Theorem \ref{P_1e^{Q_1}+P_2}, the image of $\gamma^*$ under $ P_1 \exp({Q_1 })+P_3$ is unbounded. In other words,  there exists a sequence $ z_n  \in  \gamma^*$ such that $\lim_{ n\to \infty}z_n=\infty$ and  $$|P_1(z_n)\exp({Q_1(z_n)})+P_3(z_n)|\to\infty~\mbox{as}~ n\to\infty.$$ As already observed, $\lim_{ n\to \infty}Arg(z_n)=\alpha_{j}(Q_1)$. Since none of the fundamental rays of $Q_1$ coincide  with those of $Q_2$, $z_n \in \overline{B^{\epsilon'}_{j'}(Q_2)}$ for some $j' \in \{0, 1, 2, \ldots ,2m_2 -1\}$ and positive $\epsilon$. Note that $j'$ is determined by $\alpha_{j}(Q_1)$ alone. There are two cases depending on the parity of $j'$.
			
			\par If $j'$ is odd, then  Lemma~\ref{Growth-exppoly}(2) applied to $Q_2$ gives $|P_2(z_n)\exp({Q_2(z_n)})|\to 0$ as $n\to\infty$.
			Using the inequality  $$|F_3(z_n)|\geq |P_1(z_n)\exp({Q_1(z_n)})+P_3(z_n)|-|P_2(z_n)\exp({Q_2(z_n)})|,$$ 
			we have $|F_3(z_n)|\to\infty$ as $n\to \infty$. 
			
			\par If $j'$ is even then, for all $z$ with sufficiently large modulus, $|\exp(Q_2 (z))| > \exp(C_2 |z|^{m_2})$ for some $C_2 >0$ by  Lemma~\ref{Growth-exppoly}(1).
			The unboundedness of  $F_3(\gamma^*)$ (therefore of $F_3 (\gamma)$) in this case is to be proved by contradiction. If  $F_3(\gamma^*)$ is bounded then 
			$$P_1(z)\exp({Q_1(z)})+P_2(z)\exp({Q_2(z)})+P_3(z)=\mathcal{O}(1),$$ 	
			as $z \to \infty$ along  $\gamma^*$.  Since, $P_2 (z) \neq 0$ for all $z$ with sufficiently large modulus,  we have
			\begin{equation}\label{order-equation}
				\frac{P_1(z)}{P_2(z)}\exp({Q_1(z)-Q_2(z)})+1=-\frac{P_3(z)}{P_2(z)}\exp({-Q_2(z)})+\frac{\mathcal{O}(1)}{P_2(z) \exp(Q_2(z))}.
			\end{equation}
			Applying Lemma~\ref{Growth-exppoly}(1) to $Q_2$, we have $\left|\frac{P_3(z)}{P_2(z)}\exp({-Q_2(z)})\right| \leq \left|\frac{P_3(z)}{P_2(z)} \right|  \exp(-C_2 |z|^{m_2})  $   and $\frac{\mathcal{O}(1)}{P_2(z) \exp(Q_2(z)} \to 0$ as $z\to\infty$ along $\gamma^*$. Consequently, the right hand side of the Equation (\ref{order-equation}) tends to $0$ and this gives that  $\frac{P_1(z)}{P_2(z)}\exp({Q_1(z)-Q_2(z)})\to-1 ~\mbox{as}~z\to\infty~\mbox{along}~\gamma^*.$  
			In other words, for every sequence $z_n \in \gamma^*$ with $\lim_{n \to \infty}z_n =\infty$, we have  $$\frac{P_1(z_n)}{P_2(z_n)}\exp({Q_1(z_n)-Q_2(z_n)})\to-1 ~\mbox{as}~n \to\infty.$$
			However, it follows from Lemma~\ref{asymptotic-to-ray} that there is a sequence $\{u_n\}_{n\geq 1}$ on $\gamma^*$  with $\lim_{n \to \infty}u_n =\infty$ such that  
			$-1$ is not a limit point of $\left\{\frac{P_1 (u_n)}{P_2(u_n)} \exp({Q_1(u_n)-Q_2(u_n)})\right\}_{n\geq 1}$.
			This is a contradiction, which completes the proof.  
	\end{enumerate}	\end{proof}	 
We now give the proof of Theorem~\ref{Generalization}.
	\begin{proof}[\bf{Proof of Theorem \ref{Generalization}}]
		For a natural number $l \geq 2$, let $$F_l(z)=P_1(z)\exp({Q_1(z)})+P_2(z)\exp({Q _2(z)})+\cdots+P_{l-1}(z)\exp({Q_{l-1}(z)})+P_l(z),$$ where $Q_i$ is a non-constant polynomial and $P_i$ is a non-zero polynomial (possibly constant) for each $i=1,2,3, \ldots, l-1$ and $P_l$ is a non-constant polynomial. 
		%	 It is also given that $\deg(Q_i)\neq \deg(Q_j)$  and none of the  fundamental rays of $Q_i$ coincide with those of $Q_j$ for any $i\neq j$.
		\par The proof is by the method of induction on $l$.  For $l=2,3$,  Theorem~\ref{Generalization} is in fact Theorems \ref{P_1e^{Q_1}+P_2} and  \ref{e^{Q_1 }+e^{Q_2 }+P_3}, respectively, which are already proved.
		Let $F_s$  has bov for all $s<l$, as the induction hypothesis.	
		
		\par 
		
		Let $\gamma:[0,\infty)\to\mathbb{C}$ be an unbounded curve. In view of Lemma~\ref{Bov iff}, it is enough to show that $F_l(\gamma)$ is unbounded. 
		Since $\deg(Q_i)\neq \deg(Q_j)$ for $i\neq j$, without loss of generality assume that $\deg(Q_i)>\deg(Q_{i+1})$ for all $i=1,2,3,\ldots,l-2$. Recall that $\deg(P_i)=d_i$ and $\deg(Q_i)=m_i$. There are four situations (not necessarily mutually exclusive) depending on the position of $\gamma$ with respect to the fundamental partition of $Q_1$.
		The first two are mutually exclusive and arise when $\gamma$ is unbounded in an odd-indexed closed sub-sector of $Q_1$. The third and fourth correspond to situations when $\gamma$ is unbounded in an even-indexed closed sub-sector of $Q_1$ and $\gamma$ is asymptotic to a fundamental ray of $Q_1$, respectively.
		
		\begin{enumerate}
			
			\item (\textit{Unbounded in an odd-indexed closed sub-sector of $Q_1$:I})	
			Let for each $i\in\{1,2,3,\ldots,l-1\}$, there exist some positive $\epsilon_i$ and odd $j_i\in\{0,1,2,\ldots,2m_{i}-1\}$ such that $\gamma\bigcap \left(\bigcap_{i=1}^{l-1} \overline{B^{\epsilon_{i}}_{j_i}(Q_i)}\right)$ is unbounded. Then for every  sequence $z_n \in \gamma\bigcap \left(\bigcap_{i=1}^{l-1} \overline{B^{\epsilon_{i}}_{j_i}(Q_i)}\right)$ with $\lim_{ n\to \infty}z_n=\infty$, from  Lemma~\ref{Growth-exppoly}(2) we have $$\lim_{n \to \infty} P_i (z_n) \exp(Q_i (z_n)) =0~ \mbox{for each }~i.$$ Using the inequality $$|F_l(z_n)|\geq |P_l(z_n)|-\sum_{i=1}^{l-1}|P_i (z_n) \exp(Q_i (z_n))|$$ we have $|F_l(z_n)|\to\infty$ as $n\to \infty$.
			
			\item (\textit{Unbounded in an odd-indexed closed sub-sector of $Q_1$:II})	Let there be an  $r\in\{1,2,3,
			\ldots,l-2\}$ such that for each $i\in\{1,2,3,\ldots,r\}$,  there exists some positive $\epsilon_i$ and some odd $j_i\in\{0,1,2,\ldots,2m_{i}-1\}$ such that $\Gamma:=\gamma\bigcap \left(\bigcap_{i=1}^{r} \overline{B^{\epsilon_{i}}_{j_i}(Q_i)}\right)$ is unbounded whereas $\Gamma \bigcap \overline{B^{\epsilon }_{j_{r+1}}(Q_{r+1})}$ is bounded for each odd $j_{r+1}\in\{0,1,2,\ldots,2m_{r+1}-1\}$ and for each positive $\epsilon$. Further, let each $\epsilon_i$ be chosen such that no boundary ray of any closed sub-sector $B_{j_i} ^{\epsilon_i}(Q_i), i =1,2,3,\ldots, r$ is a fundamental ray of $Q_{r+1}$. This choice is going to be useful in the case (b) below. For every sequence $\{z_n\}_{n\geq 1}$ on $\Gamma$ with $\lim_{n \to \infty} z_n =\infty$, it is observed from  Lemma~\ref{Growth-exppoly}(2) that $\lim_{n \to \infty} P_i (z_n) \exp(Q_i (z_n)) =0$ for each $i\in\{1,2,3,\ldots,r\}$. There are two different scenarios.
			\begin{enumerate}
				\item  The set  $\Gamma\cap\overline{B^{\epsilon}_{j_{r+1}}(Q_{r+1})}$ is unbounded for some even $j_{r+1}\in\{0,1,2,\ldots,2m_{r+1}-1\}$.  Consider a sequence $ z_n \in \Gamma$ with $\lim_{ n\to \infty}z_n=\infty$. Then,
				\begin{equation} \label{aux-1}
					\begin{split}
						|F_l(z_n)|\geq |P_{r+1}(z_n)\exp({Q_{r+1}(z_n)})|-\sum_{i=1}^{r}|P_i (z_n) \exp(Q_i (z_n))|-\\\sum_{i=r+2}^{l-1}|P_i (z_n) \exp(Q_i (z_n))|-|P_l(z_n)|.
					\end{split}
				\end{equation} Applying   Lemma~\ref{growth-poly} to $P_{r+1}$ and   Lemma~\ref{Growth-exppoly}(1) to $Q_{r+1}$, we get  $$|P_{r+1}(z_n)\exp({Q_{r+1}(z_n)})|\geq M_{r+1}|z_n|^{d_{r+1}}\exp({C_{r+1}{|z_n|^{m_{r+1}}}}), $$  for some constants $ M_{r+1},C_{r+1}>0$. 
				Similarly, applying Lemma~\ref{growth-poly} to $P_i, Q_i$ for each $i\in\{r+2,r+3,r+4,\ldots,l-1\}$, and to $P_{l}$, we get $$|P_i(z_n)\exp({Q_i(z_n)})| \leq  M_i|z_n|^{d_i}\exp({C_i{|z_n|^{m_i}}})~\mbox{ and}~ |P_{l}(z_n)| \leq M_{l} |z_n|^{d_{l}},$$ for some $M_i,M_{l}~\mbox{and}~C_i>0$. Putting these estimates in Inequality~(\ref{aux-1}), we have
				\begin{equation*}
					\begin{split}
						|F_l(z_n)|\geq M_{r+1}|z_n|^{d_{r+1}}\exp({C_{r+1}{|z_n|^{m_{r+1}}}})-\sum_{i=1}^{r}|P_i (z_n) \exp(Q_i (z_n))|-\\\sum_{i=r+2}^{l-1}M_i|z_n|^{d_i}\exp({C_i{|z_n|^{m_i}}})-M_{l} |z_n|^{d_{l}}.
					\end{split}
				\end{equation*} Since $\sum_{i=1}^{r}|P_i (z_n) \exp(Q_i (z_n))|\to 0$ as $n\to\infty$ by  Lemma~\ref{Growth-exppoly}(2), and $m_{r+1}>m_i$ for all $i\in\{r+2,r+3,r+4,\ldots,l-1\}$, it follows from Lemma~\ref{exp-dominating} that $|F_l(z_n)|\to\infty$ as $n\to \infty$.
				\item   
				The set	$\Gamma \bigcap \overline{B^{\epsilon}_{j_{r+1}}(Q_{r+1})}$ is bounded for each even $j_{r+1}\in\{0,1,2,\ldots,2m_{r+1}-1\}$ and for each  positive $\epsilon$. Along with the previous assumption, it means that the set $\Gamma \bigcap \overline{B^{\epsilon}_{j_{r+1}}(Q_{r+1})}$ is bounded for each $j_{r+1}$ and positive $\epsilon$. In other words, there is $j_{r+1}^* \in \{0,1,2,\ldots,2m_{r+1}-1\}$ such that $\Gamma$ is asymptotic to the fundamental ray   $R_{\alpha_{j_{r+1}^*}(Q_{r+1})}$. This fundamental ray is interior to  $\cap_{i=1}^{r}\overline{B^{\epsilon_{i}}_{j_i}(Q_i)}$.  By Remark~\ref{asymptotic-nont-connected}, there is an unbounded connected curve $\Gamma^*$ such that $\Gamma \cap \Gamma^*$ is unbounded. In particular, if $z_n \in \Gamma^*$ is a sequence with $\lim_{n \to \infty}z_n =\infty$ then all but possibly finitely many terms of the sequence are on $\Gamma^*$.

				By the induction hypothesis, $\sum_{i=r+1}^{l-1}P_i  \exp(Q_i )+P_l $  has bov. Therefore, the image of $\Gamma^*$ under this function is unbounded by Lemma~\ref{Bov iff}. Consequently, there exists a sequence $ z_n \in \Gamma^*$ with $\lim_{ n\to \infty}z_n=\infty$ such that
				$$ \sum_{i=r+1}^{l-1}P_i (z_n) \exp(Q_i (z_n))+P_l(z_n) \to \infty~\mbox{ as}~ n \to \infty.$$
				Since $z_n \in \Gamma$ for all but possibly finitely many values of $n$,   $$\sum_{i=1}^{r}|P_i (z_n) \exp(Q_i (z_n))| \to 0~\mbox{ as} ~n \to \infty,$$ by  Lemma~\ref{Growth-exppoly}(2). Now, using
				$$|F_l(z_n)|\geq \left|\sum_{i=r+1}^{l-1}P_i (z_n) \exp(Q_i (z_n))+P_l(z_n)\right|-\sum_{i=1}^{r}|P_i (z_n) \exp(Q_i (z_n))|,$$ it follows that $|F_l(z_n)|\to\infty$ as $n\to \infty$.
			\end{enumerate}

			\item (\textit{Unbounded in an even-indexed closed sub-sector of $Q_1$}) Let $\gamma \bigcap \overline{B^{\epsilon}_{j}(Q_1)}$ be unbounded for some even $j\in\{0,1,2,\ldots,2m_1-1\}$ and for some  positive $ \epsilon$. Consider a sequence $ z_n \in \gamma\bigcap \overline{B^{\epsilon}_{j}(Q_1)}$ with $\lim_{n\to \infty}z_n=\infty$. Applying  Lemma~\ref{Growth-exppoly}(1) to $Q_1$ and   Lemma~\ref{growth-poly} to $P_1$, we find constants $M_1,C_1>0$ such that $|P_1(z_n)\exp({Q_1(z_n)})|\geq M_1|z_n|^{d_1}\exp({C_1{|z_n|^{m_1}}})$. Similarly, for each $i=2,3,4,\ldots,l-1$, applying Lemma~\ref{growth-poly} to $P_i, Q_i$  and to $P_{l}$, we get $|P_i(z_n)\exp({Q_i(z_n)})| \leq  M_i|z_n|^{d_i}\exp({C_i{|z_n|^{m_i}}})$ and $|P_{l}(z_n)| \leq M_{l} |z_n|^{d_{l}}$ for some $M_i,M_{l}~\mbox{and}~C_i>0$. Therefore,  the inequality
			\begin{equation*}
				\begin{split}
					|F_l(z_n)|\geq |P_1(z_n)\exp({Q_1(z_n)})|-|P_2(z_n)\exp({Q_2(z_n)})|-\cdots\\-|P_{l-1}(z_n)\exp({Q_{l-1}(z_n)})|-|P_{l}(z_n)|
				\end{split}
			\end{equation*}
			can be written as
			\begin{equation*}
				\begin{split}
					|F_l(z_n)|\geq M_1|z_n|^{d_1}\exp({C_1{|z_n|^{m_1}}})-M_2|z_n|^{d_2}\exp({C_2{|z_n|^{m_2}}})-\cdots\\-M_{l-1}|z_n|^{d_{l-1}}\exp({C_{l-1}{|z_n|^{m_{l-1}}}})-M_{l}|z_n|^{d_{l}}.
				\end{split}
			\end{equation*}
			
			Since $m_1>m_r$ for each $r=2,3,4,\ldots,l-1$, we have $|F_l(z_n)|\to\infty$ as $n\to \infty$, by Lemma~\ref{exp-dominating}.
			\item (\textit{Asymptotic to a fundamental ray of $Q_1$})
			Let $\gamma$ be asymptotic to a fundamental ray of $Q_1$. Then, there exists   $j^* \in \{0,1,2,\ldots,2m_1-1\}$ and an unbounded curve $\gamma^*$ such that $\gamma \cap \gamma^*$ is unbounded, and  every sequence $z_n \in \gamma^*~$ with   $\lim_{n \to \infty}  z_{n }   =\infty$   satisfies $\lim_{n \to \infty} Arg(z_{n }) = \alpha_{j^* }(Q_1)$   (see Property~(\ref{gamma-star})).
			\par By the hypothesis of this theorem (not the induction hypothesis), none of the fundamental rays of $Q_2$ coincide with those of $Q_1$. This gives that, there exists   $j_2 \in \{0,1,2,\ldots, 2m_2 -1\}$ and some positive $\epsilon_2$ such that $\gamma^* \cap \overline{B_{j_2}^{\epsilon_2} (Q_2)}$ is unbounded. 

			\par By the induction hypothesis, the function $F_l - P_2 \exp(Q_2)= 	  P_1\exp({Q_1})+P_3\exp({Q _3})+P_4\exp({Q _4})+\cdots+P_{l-1}\exp({Q_{l-1}})+P_{l} $  has bov. The image of the unbounded curve $ \gamma^{*}$ under $F_l -P_2 \exp(Q_2)$ is unbounded by Lemma~\ref{Bov iff}. In other words,
			\begin{equation}\label{aux-2}
				\parbox{0.9\textwidth}{
					there exists a sequence $z_n\in \gamma ^{*}$ such that 
					$\lim_{ n\to \infty}z_n=\infty$, $\lim_{ n\to \infty}Arg(z_n)=\alpha_{j^*}(Q_1)$, and $|F_l(z_n) -P_2 (z_n)\exp(Q_2 (z_n))|\to\infty$ as $n\to\infty$.
				}
			\end{equation}

			There are now two cases depending on the parity of $j_2$.
			
			\begin{enumerate}
				\item  If $j_2$ is odd, then  Lemma~\ref{Growth-exppoly}(2) applied to $Q_2$ gives $|P_2(z_n)\exp({Q_2(z_n)})|\to 0$ as $n\to\infty$.
				Since $$|F_l(z_n)|\geq |F_l(z_n)  -P_2 (z_n)\exp(Q_2 (z_n)) |-|P_2(z_n)\exp({Q_2(z_n)})|,$$
				it follows from (\ref{aux-2}) that $|F_l(z_n)|\to\infty$ as $n\to \infty$.
				
				\item If $j_2$ is even then there exists $C_2 >0$ such that $|\exp(Q_2 (z))| > \exp(C_2 |z|^{m_2})$ for all $z \in \gamma^{*}$  with sufficiently large modulus by Lemma~\ref{Growth-exppoly}(1). We now analyze $\frac{F_l}{P_2 \exp(Q_2)}$	on $\gamma^{*}$.	  Since  $P_2 (z) \neq 0$ for all $z$ with sufficiently large modulus,  we have
				\begin{equation}\label{order-equation-2}
					\begin{split}
						\frac{F_l (z)}{P_2 (z)\exp(Q_2(z))}=	\frac{P_1(z)}{P_2(z)}\exp({Q_1(z)-Q_2(z)})+1+\frac{P_3(z)}{P_2(z)}\exp({Q_3(z)-Q_2(z)})+ \\
						\cdots    
						+\frac{P_{l-1}(z)}{P_2(z)}\exp({Q_{l-1}(z)-Q_2(z)})	
						+\frac{P_{l}(z)}{P_2(z)}\exp({-Q_2(z)}).
					\end{split}
				\end{equation} 
				By Lemma~\ref{growth-poly}, there exist   $C_i>0$ for $i=3,4,5,\ldots,l-1$ such that
				$\left| \exp({Q_i(z)})\right| \leq   \exp(C_i|z|^{m_i})$. Therefore,
				$$\left| \exp({Q_i(z)-Q_2(z)})\right| \leq   \exp(C_i|z|^{m_i}-C_2 |z|^{m_2}),~\mbox{where each}~m_i<m_2.$$
				Clearly, each term in the right-hand side of Equation~(\ref{order-equation-2}) except the first two goes to $0$ as $z \to \infty$.	
				\par 
				Now, if $F_l(\gamma^{*})$ is bounded then the left-hand side of Equation~(\ref{order-equation-2})	goes to $0$ as $z \to \infty$ along $\gamma^{*}.$ Consequently,   $$\frac{P_1(z)}{P_2(z)}\exp({Q_1(z)-Q_2(z)})\to-1 ~\mbox{as}~z\to\infty~\mbox{along}~\gamma^{*}.$$
				However, this is a contradiction to Lemma \ref{asymptotic-to-ray}.
			\end{enumerate}	
		\end{enumerate}
		\par This completes the proof.
	\end{proof} 
	
	\section{No Baker wandering domain}\label{No BWD}
	In this section, we prove a theorem from which it follows that the entire functions defined in Theorems \ref{P_1e^{Q_1}+P_2}, \ref{e^{Q_1 }+e^{Q_2 }+P_3} and \ref{Generalization} do not have any Baker wandering domain.  For this, a result proved by Bergweiler et al. \cite{BergRippStall 2013} is required.
	\begin{lemma}\label{Lem7}
		Let $f$ and $h$ be transcendental entire functions and suppose that there exists $\alpha\in(0,1)$ such that $$M(r,h-f)\leq {M(r,f)}^\alpha.$$ If $f$ has a Baker wandering domain then $h$ also has a Baker wandering domain.	
	\end{lemma}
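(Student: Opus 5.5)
The plan is to transfer the annular structure that a Baker wandering domain forces on $f$ over to $h$. Then Lemma~\ref{BWD-iff} produces a multiply connected Fatou component of $h$, which must be a Baker wandering domain. Throughout, $A(r,R)$ denotes the annulus $\{z: r<|z|<R\}$.

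\textbf{Step 1 (structure of $f$ on the wandering domain).} Let $U$ be a Baker wandering domain of $f$ and $U_n$ the components containing $f^n(U)$. I will use the known fine structure of such domains: there are constants $0<a<1<b$ and radii $r_n\to\infty$ such that, for large $n$, $U_n\supset A(r_n^{a},r_n^{b})$. On this annulus $f$ is close to its maximum modulus, in the sense that
\begin{equation*}
\log |f(z)| \geq (1-\delta)\log M(|z|,f) \quad \text{for } z\in A(r_n^{a'},r_n^{b'}),
\end{equation*}
with $a<a'<b'<b$ and $\delta>0$ as small as needed. Moreover $f(A(r_n^{a'},r_n^{b'}))$ contains, and winds around, an annulus of the same type at level $n+1$. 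This is the step I regard as the main input. Proving it from scratch requires the hyperbolic-metric estimates in multiply connected Fatou components, namely Harnack's inequality applied to $\log|f^k|$ on large annuli. I would quote it from the theory of multiply connected wandering domains rather than reprove it.

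\textbf{Step 2 (comparison of $h$ with $f$).} Fix $\delta$ with $1-\delta>\alpha$. On the annuli $B_n:=A(r_n^{a'},r_n^{b'})$ the hypothesis gives
\begin{equation*}
|h(z)-f(z)|\leq M(|z|,f)^{\alpha} < M(|z|,f)^{1-\delta}\leq |f(z)|.
\end{equation*}
So on every circle $\{|z|=\rho\}\subset B_n$, Rouché's theorem (through the argument principle) shows that $h$ and $f$ have the same winding number about $0$. The same bound also gives $\log|h(z)|\geq \log|f(z)|-\log 2$, so $h$ inherits the lower bound $\log|h(z)|\ge (1-2\delta)\log M(|z|,f)$ on $B_n$. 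Since $\log M(r,f)/\log r\to\infty$, this lower bound, together with the upper bound $|h|\le 2M(|z|,f)$, places $h(B_n')$ inside $B_{n+1}$ for a slightly thinner sub-annulus $B_n'\subset B_n$. It also shows that $h(B_n')$ surrounds $0$. Iterating, $h^k(B_N')\subset B_{N+k}$ for every $k$, for some fixed large $N$.

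\textbf{Step 3 (Fatou set and conclusion).} The family $\{h^k\}$ on $B_N'$ omits every value in the disk $\{|w|<r_N^{a'}\}$, since its images stay in $\bigcup_k B_{N+k}$. By Montel's theorem $B_N'$ therefore lies in the Fatou set of $h$. Let $V$ be the Fatou component of $h$ containing $B_N'$. If $V$ were simply connected, it would contain the disk bounded by the outer circle of $B_N'$. Then the maximum principle applied to $h^k$ would make $h^k$ bounded on that disk by its values on the circle. This contradicts the fact that $h$ has Julia points of bounded modulus, for instance repelling periodic points, which exist for every transcendental entire function. Hence $V$ is multiply connected, and by Lemma~\ref{BWD-iff} it is a Baker wandering domain of $h$. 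The delicate point in Step 2 is making the exponents $a',b'$ uniform in $n$, so that the invariance $h(B_n')\subset B_{n+1}$ genuinely holds. I would secure this by choosing $\delta$ after fixing the annulus constants supplied by Step 1.
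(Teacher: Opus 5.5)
The paper does not prove this lemma; it quotes it from Bergweiler--Rippon--Stallard, so your argument has to stand on its own. Its outer shell is sound. Suppose you had annuli $B_n'\to\infty$ with $h(B_n')\subset B_{n+1}'$. Then Montel puts $B_N'$ in the Fatou set of $h$, and a simply connected Fatou component containing $B_N'$ would contain the disc bounded by any circle in $B_N'$. For large $N$ that disc contains points of the Julia set of $h$, so no maximum principle is needed, and Lemma~\ref{BWD-iff} finishes.

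\textbf{The gap is Step~2, and the whole difficulty of the lemma sits there.}

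Your bounds $\tfrac12 M(|z|,f)^{1-\delta}\le |h(z)|\le 2M(|z|,f)$ compare $h(z)$ with the maximum modulus at the same radius. The target annulus, however, is defined through $r_{n+1}=|f^{n+1}(z_0)|\in[M(r_n,f)^{1-\delta},M(r_n,f)]$. So $h(A(r_n^{a''},r_n^{b''}))\subset A(r_{n+1}^{a'},r_{n+1}^{b'})$ requires cross-scale estimates such as $\log M(r_n^{a''},f)\gtrsim a'\log M(r_n,f)$ and $\log M(r_n^{b''},f)\lesssim b'\log M(r_n,f)$.

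\begin{itemize}
\item These do not follow from $\log M(r,f)/\log r\to\infty$. Convexity of $\log M(r,f)$ in $\log r$ gives both inequalities in the wrong direction: for $e^z$, $\log M(r^{1/2})=r^{1/2}$ is far below $\tfrac12\log M(r)$.
\item For functions with multiply connected wandering domains such estimates do hold. They follow from the monomial-like behaviour of $f$ on large annuli in $U_n$, which is deep structural information not contained in the facts you list in Step~1.
\item Even granting them, an error of the form $\log|f|\ge(1-\delta)\log M$ lets the normalized exponent $\log|f(z)|/\log r_{n+1}$ change by a factor of order $(1-\delta)^{\pm1}$ at every step. Nothing you have shows that a fixed pair $a'',b''$ is preserved, and choosing $\delta$ after $a',b'$ does not create the missing room.
\item As written you only obtain $h(B_n')\subset B_{n+1}$, which cannot be iterated. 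You need $h(B_n')\subset B_{n+1}'$.
\end{itemize}

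What you actually need is a forward-invariant family for $f$ itself, uniform in $n$, with some room to spare, and lying where $|f(z)|\ge M(|z|,f)^{1-\delta}$. On such sets $|h/f-1|\le M(|z|,f)^{\alpha-1+\delta}$, so $h$ is a super-exponentially small relative perturbation of $f$ and inherits the invariance; your comparison idea then works.

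One possible way to produce such a family is via hyperbolic neighbourhoods. Take $\{z\in U_n:\rho_{U_n}(z,f^n(\Gamma))<R\}$, where $\rho_{U_n}$ is hyperbolic distance and $\Gamma\subset U$ is a fixed non-contractible curve.
\begin{itemize}
\item The Schwarz--Pick lemma gives invariance under $f$.
\item $h$ moves points away from their $f$-images by only a summable hyperbolic distance.
\item You still need the Bergweiler--Rippon--Stallard-type fact that these neighbourhoods stay inside annuli $A(r_n^{a'},r_n^{b'})$ on which the minimum-modulus estimate holds.
\end{itemize}
Some such ingredient must be proved, or quoted in this stronger form, before Step~3 can run.
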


	\begin{theorem}\label{No-BWD-result}
		For a natural number $l\geq 2$, if	$P_1,P_2,\ldots,P_l$ are non-zero polynomials, possibly constant and $Q_1, Q_2,\ldots, Q_{l-1}$ are non-constant polynomials such that $\deg(Q_i)\neq \deg(Q_j)$ for $i\neq j$ then the entire function $P_1\exp(Q_1)+P_2\exp(Q _2)+\cdots+P_{l-1}\exp(Q_{l-1})+P_l$ does not have any Baker wandering domain.
		
	\end{theorem}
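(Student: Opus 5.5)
We argue by contradiction, using Lemma~\ref{Lem7} to transfer the Baker wandering domain to a simpler function. By Remark~\ref{Rem2.2}, that simpler function cannot have one.

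Relabel so that $m_1=\deg(Q_1)>\deg(Q_i)$ for all $i=2,\ldots,l-1$; this is possible since the degrees are distinct. Set $f=P_1\exp(Q_1)+P_2\exp(Q_2)+\cdots+P_{l-1}\exp(Q_{l-1})+P_l$, the function in the statement. Set $h=P_1\exp(Q_1)$, which is transcendental entire and has only finitely many zeros, namely those of $P_1$. By Remark~\ref{Rem2.2}, $h$ has no Baker wandering domain. Suppose $f$ had one. Lemma~\ref{Lem7} is symmetric in spirit, so we apply it with the roles arranged as needed: we want $M(r,h-f)\le M(r,f)^\alpha$ for some $\alpha\in(0,1)$ and all large $r$. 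Then the Baker wandering domain of $f$ forces one for $h$, a contradiction.

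The key estimate is therefore a growth comparison. First, $h-f=-(P_2\exp(Q_2)+\cdots+P_{l-1}\exp(Q_{l-1})+P_l)$. Since each $m_i<m_1$ for $i\ge2$, Lemma~\ref{growth-poly} and the trivial bound $|\exp(Q_i(z))|\le\exp(C_i|z|^{m_i})$ give
\[
M(r,h-f)\le \exp\bigl(C r^{m_1-1}\bigr)
\]
for some $C>0$ and all large $r$. Second, we need a lower bound $M(r,f)\ge \exp(c\,r^{m_1})$ for some $c>0$. To get it, choose $z$ with $|z|=r$ on the ray of argument $\alpha_0(Q_1)+\frac{\pi}{2m_1}$, the bisector of the even sector $A_0(Q_1)$, which lies in $\overline{B^\epsilon_0(Q_1)}$. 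By Lemma~\ref{Growth-exppoly}(1), $|P_1(z)\exp(Q_1(z))|\ge M_1 r^{d_1}\exp(A r^{m_1})$. All other terms are at most $M r^{d}\exp(C' r^{m_1-1})$. Lemma~\ref{exp-dominating}, or simply direct comparison, then yields $|f(z)|\ge \exp(\tfrac{A}{2}r^{m_1})$ for large $r$. Taking any fixed $\alpha\in(0,1)$, we have $M(r,f)^\alpha\ge\exp(\tfrac{\alpha A}{2}r^{m_1})\ge\exp(Cr^{m_1-1})\ge M(r,h-f)$ for large $r$. The same lower bound also holds with $h$ in place of $f$, which is useful depending on the orientation of Lemma~\ref{Lem7}.

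The main obstacle is the direction of Lemma~\ref{Lem7}. As stated, its hypothesis is $M(r,h-f)\le M(r,f)^\alpha$, and it transfers a Baker wandering domain from $f$ to $h$. We therefore take $f$ to be the exponential polynomial assumed to have a Baker wandering domain and $h=P_1\exp(Q_1)$. The estimate needed is exactly the one above, with the lower bound on $M(r,f)$. The lemma is stated for all $r$, but the inequality only holds for large $r$. I expect this to be harmless, since only large $r$ matter in the Bergweiler--Rippon--Stallard argument, but I would note it explicitly. If necessary, I would first replace $f$ by a function that agrees with it in growth, or simply invoke the lemma in its "for sufficiently large $r$" form. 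Finally, Remark~\ref{Rem2.2} applied to $h$, which has finitely many zeros, gives the contradiction.
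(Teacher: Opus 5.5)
Your proposal is correct and is essentially the paper's proof. You take $h=P_1\exp(Q_1)$ with $\deg Q_1$ maximal and compare an upper bound on $M(r,h-f)$ with the lower bound $M(r,f)\ge\exp(cr^{m_1})$; your point on the bisector of $A_0(Q_1)$ is exactly the paper's maximizer $z_r$ of $\Re(b_{m_1}z^{m_1})$. You then apply Lemma~\ref{Lem7} in the direction $f\Rightarrow h$ for large $r$, exactly as the paper does, and conclude with Remark~\ref{Rem2.2} for $h$.

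There is one small slip. The bound $M(r,h-f)\le\exp(Cr^{m_1-1})$ fails when $m_1=1$ (which forces $l=2$) and $P_2$ is non-constant, e.g.\ $f=e^z+z$. Keep the polynomial factor $r^{D}\exp(Cr^{m_1-1})$, as you already did in your lower-bound step, or treat $l=2$ separately as the paper does; the final comparison is unaffected.
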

	\begin{proof} Let $\deg(P_i)=d_i$, $\deg(Q_i)=m_i$ for each $i\in\{1,2,3,\ldots,l-1\}$ and $\deg(P_l)=d_l$.
		
		If $l=2$, let $f(z)=P_1(z)e^{Q_1(z)}+P_2(z)$. Let $h(z)=P_1(z)e^{Q_1(z)}$. It can be  seen that $$M(r,h-f)=\max_{|z|=r}|P_2(z)|\leq {r}^{d_2+1} ~\mbox{for sufficiently large}~ r.$$ Therefore, ${M(r,h-f)}^2\leq r^{2({d_2+1})}$. It is well known that for any $C>0$ and sufficiently large $r$, any transcendental function $\phi$ satisfies the relation $M(r,\phi)>r^C$ (see page 11, \cite{asymptoticvalue-book}). Choosing $C=3({d_2+1})$, we have  ${M(r,h-f)}^2\leq r^{2({d_2+1})}< r^C< M(r,f)$. Thus, $M(r,h-f)< {M(r,f)}^{\frac{1}{2}}$. We are going to show this inequality for $l \geq 3.$
		\par 
		
		If $l\geq 3$, let $f(z)=P_1\exp(Q_1)+P_2\exp(Q _2)+\cdots+P_{l-1}\exp(Q_{l-1})+P_l$, where $P_1,P_2,\ldots,P_l$. Take $h(z)=P_1(z)e^{Q_1(z)}$. Since the degrees of $Q_1,Q_2,\ldots,Q_{l-1}$ are pairwise distinct, without loss of generality, we assume that $m_1 >m_i $ for every $i$. Then, for sufficiently large $r$, we have
		\begin{align*} M(r,h-f)&=\max_{|z|=r}\left|\sum_{i=2}^{l-1}P_i(z)\exp(Q_i(z))+P_l(z)\right|\ \\& \leq\max_{|z|=r}\left(\left(\sum_{i=2}^{l-1}|P_i(z)|\exp(|Q_i(z)|) \right)+|P_l(z)|\right). \end{align*}
		Applying Lemma~\ref{growth-poly} to each $P_i$ and $Q_i$, it is observed that $$M(r,h-f) \leq   \max_{|z|=r}\left(\left(\sum_{i=2}^{l-1}M_i |z|^{d_i}\exp(\widetilde{M_i}|z|^{m_i}) \right)+M_l |z|^{d_l}\right),$$
		for some positive constant $M_i, \widetilde{M_i}$.
		Since  $M_i |z|^{d_i} < \exp(\widetilde{M_i}|z|^{m_i})$ for $2 \leq i \leq l-1$ and for all sufficiently large $r$, denoting $ \max\{\widetilde{M_i}: 2 \leq i \leq l-1\}$ and $ \max\{m_i: 2 \leq i \leq l-1\}$ by $C$ and $m$ respectively, we have $M(r,h-f) \leq   (l-2) \exp(2Cr^{m}) + M_l r^{d_l}$. Since $M_l r^{d_l} < \exp(2 Cr^m)$ for all sufficiently large $r$, we have
		
		\begin{equation}\label{upperbound}
			M(r,h-f)   < (l-1) \exp(2Cr^{m}).\end{equation}

		\par  Let  $a_{m_1}$ be the leading coefficient of $Q_1$ and $z_r$ be a  point at which $\Re(a_{m_1}z^{m_1})$ attains its maximum on the circle $\{z: |z|=r\}$. Then $ \Re(a_{m_1}z_{r}^{m_1})=|a_{m_1}|r^{m_1}.$
		
		Observe that $\max_{|z|=r} \Re(Q_1(z))  \geq \Re(Q_1(z_r)) \geq |a_{m_1}|r^{m_1} - |a_{m_1 -1}|r^{m_1-1}-  \cdots- |a_{1}|r -|a_0| $. Applying Lemma~\ref{growth-poly} to the right-hand side we see that  $\max_{|z|=r} \Re(Q_1(z))  \geq C' r^{m_1}$ for some $C'>0$   and for all sufficiently large $r$. Also, applying this lemma to $P_1$, it is seen that there is $M_1 >0$ satisfying $|P_1 (z)| \geq M_1 |z|^{d_1}$ for all $z$ with sufficiently large modulus. Therefore $|P_1(z_r)e^{Q_1(z_r)}|\geq M_1 r^{d_1} \exp(C' r^{m_1}).$ Thus, for sufficiently large $r$, $$\left|f(z_r)\right|\geq |P_1(z_r)e^{Q_1(z_r)}|-\left|h(z_r)-f(z_r)\right| \geq M_1 r^{d_1} \exp(C' r^{m_1}) - (l-1) \exp(Cr^m).$$

		The right-hand side can be seen to be larger than $  \exp(A r^{m_1})$ for some $A>0$ (as $m_1 >m$) and consequently $M(r,f) \geq |f(z_r)| \geq\exp(Ar^{m_1})$ for all sufficiently large $r$. Thus, $M(r,f)^{\frac{1}{2}}\geq\exp\left(\frac{A}{2}r^{m_1}\right)$. Since $m_1>m $, we have $\log (l-1)  Cr^{m} < \frac{A}{2}r^{m_1}$ for all sufficiently large $r$. By taking exponential on both the sides, $ (l-1)\exp(C r^{m}) < \exp(\frac{A}{2}r^{m_1}) \leq M(r,f)^{\frac{1}{2}}$. In other words,  $M(r,h-f)<M(r,f)^{\frac{1}{2}}$ for all sufficiently large $r$.
		
		\par
		From Lemma \ref{Lem7}, it is evident that if $f$ has a Baker wandering domain, then so does $h$. In other words, the absence of Baker wandering domains for $h$ implies the absence of Baker wandering domains for $f$. However, $h$ has at most finitely many zeros, and it implies that $h$ does not have any Baker wandering domain by Remark \ref{Rem2.2}. Consequently, $f$ does not have any Baker wandering domain.	
	\end{proof}
	\begin{Remark}
		\begin{enumerate}
			\item   In \cite{Zheng 2006} Zheng proved that, for rational functions $R_i$ and polynomials $Q_i$,  the meromorphic function $\sum_{i=1}^{m}R_i(z)e^{Q_i(z)}$ does not have any Baker wandering domain. Theorem~\ref{No-BWD-result} follows from this result, Zheng's proof is slightly more involved than ours.
			\item By Theorem 3.1 of \cite{Baker 1984}, a Fatou component of an entire function is a Baker wandering domain if and only if it is multiply connected. Hence, every Fatou component of the functions in the above theorem is simply connected.
		\end{enumerate}
		
	\end{Remark}
	
	\section{Other examples and concluding remarks}\label{Examples}
	In Theorem \ref{e^{Q_1 }+e^{Q_2 }+P_3}, the hypotheses are that $\deg(Q_1)\neq \deg(Q_2)$ and none of the fundamental rays of $Q_1$ coincides with those of  $Q_2$. But there are functions with bov,  for which none of these assumptions holds.
	
	\begin{example}\label{Exm}
		For every non-constant polynomials $P$ and $Q$, the functions $\exp({Q})+\exp({-Q})+P$ and $\exp({Q})-\exp({-Q})+P$ have bov.
	\end{example}
	\begin{proof}
		We show that $f(z) =\exp({Q})+\exp({-Q})+P$  has bov. The proof for $\exp({Q})-\exp({-Q})+P$ is analogous.
		\par
  Let $\gamma:[0,\infty)\to \mathbb{C}$ be an unbounded curve in the complex plane. It is enough to show that $f(\gamma)$ is unbounded by Lemma~\ref{Bov iff}. This is to be done by finding a sequence $\{z_n\}_{n\geq 1}$ on $\gamma$ with $\lim_{n \to \infty} z_n =\infty$ such that $\lim_{n\to \infty} f(z_n) =\infty$.
  
	\par If there is a sequence $\{z_n\}_{n\geq 1}$ on $\gamma$ with $\lim_{n \to \infty}z_n =\infty$ such that $\{\Re(Q(z_n))\}_{n \geq 1}$ is bounded then $|\exp(Q(z_n))+\exp(-Q(z_n))| \leq \exp(\Re(Q(z_n)))+\exp(-\Re(Q(z_n))) \leq K$ for some $K>0$.
		Therefore, $|f(z_n)| \geq |P(z_n)|-|\exp({Q(z_n)})+\exp({-Q(z_n)})| \geq |P(z_n)|-K$, which gives that $\lim_{n\to \infty} f(z_n) =\infty$.
		\par Let 
		\begin{equation}
			\label{for-every-sequence}
			\mbox{for every sequence}~u_n \in \gamma~\mbox{with}~\lim_{n\to \infty} u_n =\infty,~\mbox{the sequence}~\{\Re(Q(u_n))\}_{n \geq 1}\mbox{is unbounded}.
		\end{equation}  
		Take a point $w_0 \in Q(\gamma)$, let $\Re(w_0)=M$ and consider the set $\{\Im(w): w \in Q(\gamma)~\mbox{and}~\Re(w)=M\}$. If this set is unbounded then there is a sequence $\{u_n'\}_{n\geq 1}$ in $\gamma$ with $\Re(Q(u_n'))=M$ and $\lim_{n \to \infty} \Im(Q(u_n'))$ is either $+\infty$ or $-\infty$. Since $Q$ is a non-constant polynomial, we must have $\lim_{n \to \infty} u_n' =\infty$. However, this is not possible by (\ref{for-every-sequence}). Thus, there is an $M' >0$ such that for every $w \in Q(\gamma)$ with $\Re(w)=M$, we have $|\Im(w)|<M'$. Consequently, at least one of the sets $\{w: w \in Q(\gamma) ~\mbox{and}~\Re(w) > \Re(w_0)\} $ or $ \{w: w \in Q(\gamma) ~\mbox{and}~\Re(w) < \Re(w_0)\} $ is unbounded. Without loss of generality, assume the first situation. 
		\par Consider the set $$\Gamma:= \{w: w \in Q(\gamma) ~\mbox{and}~\Re(w) > \Re(w_0)\} \cup \{w: \Re(w)=\Re(w_0)  ~\mbox{and}~|\Im(w)|<M' \}.$$
		This is the union of the part of $Q(\gamma)$ lying right to the vertical line passing through $w_0$ and a bounded connected subset of this vertical line, which contains all the points of intersection of $Q(\gamma)$ and the line. The set $\Gamma$ is clearly  connected and unbounded. Let $\gamma^*$ be a component of $Q^{-1}(\Gamma)$ intersecting $\gamma$. Then $\gamma^*$ is also connected and unbounded. Since the function $\exp(Q)+P$ has bov, by Theorem~\ref{P_1e^{Q_1}+P_2}, the image of $\gamma^*$ under this function is unbounded by Lemma~\ref{Bov iff}. In other words, there is a sequence $z_n \in \gamma^*$ with $\lim_{n \to \infty}z_n =\infty$ such that $\lim_{n \to \infty}\exp(Q(z_n))+P(z_n)=\infty$.	Note that the set $\{ z \in \gamma^*: \Re(Q(z)) =M~\mbox{and}~|\Im(Q(z))|<M'\}$ is bounded and therefore, all but possibly finitely many terms of $\{z_n\}_{n \geq 1}$ are on the original curve $\gamma$. Thus, $\lim_{n \to \infty}\Re(Q(z_n))=+\infty $ and consequently,
		$|f(z_n)| \geq |P(z_n)+\exp({Q(z_n)})|- |\exp({-Q(z_n)})|.$ This gives that $\lim_{n \to \infty}|f(z_n)| = \infty$.		
	\end{proof}
	\begin{Remark}
		Taking $Q(z)=z$ in the above example, it is seen that for every non-constant polynomial $P$, the functions $\sinh(z)+P(z)$ and $\cosh(z)+P(z)$ have bov.
	\end{Remark}
	
	We conclude by posing the following questions.
	\begin{question}
		\begin{enumerate}
			\item  In Example~\ref{Exm}, the conclusion seems to hold if $-Q$ is replaced by a polynomial with the same degree as that of $Q$ and whose leading coefficient is the negative of that of $Q$. This, however, remains to be verified. More generally, one may consider any polynomial $\widetilde{Q}$ in place of $-Q$ such that the set of fundamental rays of $\widetilde{Q}$ and $-Q$ coincide.
			\item It would be interesting to see up to what extent the hypotheses on the degrees and leading coefficients of $Q_1, Q_2$ can be relaxed in Theorem~\ref{e^{Q_1 }+e^{Q_2 }+P_3}. For example, keeping the degrees the same, the leading coefficients can be chosen suitably to ensure that the fundamental rays do not coincide.  However, additional care is needed regarding the inequalities involved in the proof.
			\item Each of the functions considered in Theorems ~\ref{P_1e^{Q_1}+P_2}, \ref{e^{Q_1 }+e^{Q_2 }+P_3} and \ref{Generalization} has an unbounded set of critical values, while these do not have any Baker wandering domain. The following questions are natural. 
			\begin{enumerate}
				\item  There are examples having Baker domain such as Fatou's function $1+z +e^{-z}$ \cite{Fatou 1920}. In \cite{DasNayak 2024}, it is shown that the function $  \lambda +z+ \lambda e^z$ does not have any Baker domain for $ 0< \lambda < 2$. Characterization of functions considered in Theorems ~\ref{P_1e^{Q_1}+P_2}, \ref{e^{Q_1 }+e^{Q_2 }+P_3} and \ref{Generalization} that has a Baker domain, remains to be done.
				\item In view of \cite{Simplyconnected-Rempeetal}, determining the maximum number of completely invariant domains for these functions is also an interesting problem.
				\item In general, these functions can be considered as well-behaving test cases for every result that are true for functions in the Eremenko-Lyubich class.
			\end{enumerate}
		\end{enumerate}
	\end{question}
	% 	\section{Disclosure statement}
	% 	The authors report that there is no competing interest to declare.
 \section{Funding}
	The first author is supported by the University Grants Commission, Govt. of India, through a Senior Research Fellowship.
	% 	\section{Data Availability statement} Data sharing not applicable to this article as no datasets were generated or analyzed during the current study.
	%	
	 
\end{document}